\newcommand{\IN}{{\mathbb{N}}}
\newcommand{\IR}{{\mathbb{R}}}
\newcommand{\R}{{\mathbb{R}}}
\newcommand{\lm}{\lambda}
\newcommand{\1}{\mathbbmss{1}}
\renewcommand{\phi}{\varphi}
\renewcommand{\epsilon}{\varepsilon}
\newcommand{\aeq}{\Leftrightarrow}
\newcommand{\N}{\mathbb{N}}
\newcommand{\B}{\mathcal{B}}
\newcommand{\ltwo}{{L^2(X,m)}}
\newtheorem{theorem}{Theorem}[section]
\newtheorem{lemma}[theorem]{Lemma}
\newtheorem{proposition}[theorem]{Proposition}
\newtheorem{corollary}[theorem]{Corollary}
\theoremstyle{definition}
\newtheorem{definition}[theorem]{Definition}
\newtheorem{remarks}[theorem]{Remark}
\newtheorem{example}[theorem]{Example}
\newcommand{\Hmm}[1]{\leavevmode{\marginpar{\tiny%
			$\hbox to 0mm{\hspace*{-0.5mm}$\leftarrow$\hss}%
			\vcenter{\vrule depth 0.1mm height 0.1mm width \the\marginparwidth}%
			\hbox to 0mm{\hss$\rightarrow$\hspace*{-0.5mm}}$\\\relax\raggedright #1}}}
\begin{document}
\title[Courant's Theorem for Positivity Preserving Forms]
{Courant's Nodal Domain Theorem for Positivity Preserving Forms}
\author[M.~Keller]{Matthias Keller}
\address{M.~Keller,   Institut f\"ur Mathematik, Universit\"at Potsdam
	\\14476  Potsdam, Germany}
\email{{matthias.keller@uni-potsdam.de}}
\author[M.~Schwarz]{Michael Schwarz}
\address{M.~Schwarz,  Institut f\"ur Mathematik, Universit\"at Potsdam
	\\14476  Potsdam, Germany}
\email{mschwarz@math.uni-potsdam.de}

\begin{abstract}
	We introduce a notion of nodal domains for positivity preserving forms. This notion generalizes the classical ones for Laplacians on domains and on graphs. We prove the Courant nodal domain theorem in this generalized setting using  purely analytical methods. 
\end{abstract}

\maketitle

\section{Introduction}
In 1923 Courant, \cite{Courant}, proved his famous result about the nodal domains of the eigenfunctions. He considers eigenfunctions of a self-adjoint differential operator related to a quadratic form   $ Q_{\Omega}  $ on a bounded domain $ \Omega $ of $ \R^{d} $ with Dirichlet boundary conditions. A nodal domain is a maximal connected set where an eigenfunction does not change its sign. Ordering the eigenvalues $ \lm_{n} $ of the operator in increasing order counted with multiplicity he proved that
\begin{align*}
N_{Q_{\Omega}}(\lm_{n})\leq n,
\end{align*}
where $ N_{Q_{\Omega}}(\lm_{n}) $ is the number of nodal domains to the eigenvalue $ \lm_{n} $.

Since then this type of result was proven in various different contexts, like compact quantum graphs, \cite{GSW04}, and  finite discrete graphs, \cite{Davies}, and
 Schrödinger operators \cite{Alessandrini,Ancona}. In more specific settings there is indeed much more known about the geometry of the nodal domains, 
 see e.g. \cite{P56, C76, Bru78,DF88,NPS05, HH07, B08, M08, HHT09, P09,CM11, BBS12, BGRS12,XuYau12, B14, B15, BH16} and references therein.

The aim of this paper is to prove a Courant bound for positivity preserving  forms $ Q $ with compact resolvent, i.e., quadratic forms on an $ L^{2}$-space that  give rise to  self-adjoint compact resolvents $ (L+1)^{-1} $ that map positive functions to positive functions. Indeed, compact resolvent is equivalent to the fact that the self-adjoint operator $ L $ has purely discrete spectrum.
This setting includes numerous of the results mentioned above, e.g. elliptic second order differential operators and Schrödinger operators  on domains, manifolds, discrete (infinite) graphs and quantum graphs as well as operators coming from Dirichlet forms as they arise on fractals  which  are all examples of Schrödinger forms as treated in \cite{StVoi96}.

However, in this general setting there is a very fundamental question, namely: What is  a nodal domain? This can already be seen in the contrast of the definition of nodal domains for Laplacians  in the continuum and on discrete graphs. While in the continuum nodal domains are defined as topological connected components, in the discrete setting they are defined combinatorially. The two common features are that both definitions capture a notion of maximality and connectedness. In our setting we employ the notion of
\begin{itemize}
	\item  \emph{invariance} (Definition~\ref{definition:invariance}) which represents maximality and
	\item  \emph{irreducibility} (Definition~\ref{definition:irreducible}) which represents connectedness. 
\end{itemize}

For the special cases mentioned above this yields exactly the correct notions.

Both of these notions are not derived from the underlying space $ X $ but rather directly from the quadratic form $ Q $. To this end it is  necessary to restrict $ Q $ to a subspace of $ L^{2} $-functions supported on a subset of $ X $. In this paper we discuss two natural restrictions -- one for the general case and one for the special case of regular Dirichlet forms. In a very vague sense these restrictions can be thought of as  Dirichlet and Neumann boundary conditions. (Indeed, this relation can be made precise in the case of graphs, Section~\ref{section:graphs}.)

The first natural restriction of the form $ Q $  is given by $$ D(Q_{A})=\{f\in D(Q)\mid f\vert_{X\setminus A}=0\}   $$
for a measurable subset $ A\subseteq X$.
This yields the notion of \emph{$ Q $-nodal domains} which are introduced at the end of Section~\ref{section:nodal_domains}.
For the $ n $-th eigenvalue $ \lambda_{n} $, of the operator associated to $ Q $, with multiplicity $ k $, we prove
\begin{align*}
N_{Q}(\lambda_{n})\leq n+k-1
\end{align*}
in Corollary~\ref{corollary:courant_Q_nodal}.
Indeed, it can already be seen from the case of finite graphs \cite{Davies}, that the additional summand $ k-1 $ is necessary.

The second natural restriction which is relevant for the case  of a regular Dirichlet form $ Q $  is given by $$ D(Q^{A})= \overline{C_{c}(A)\cap D(Q)} $$
for an open subset $ A $ where the closure is take with respect to the form norm. This yields, c.f. Section~\ref{section:regular}, the notion of \emph{Dirichlet nodal domains} for $ Q $ whenever the eigenfunctions are continuous. In Corollary~\ref{corollary:reg:NodalDom}, we prove the same estimate as above for Dirichlet nodal domains. In the case of local Dirichlet forms which allow for a unique continuation principle the estimate holds even true without the term $ k-1 $, Corollary~\ref{corollary:strong_bound}.

To treat both cases simultaneously we introduce the notion of restrictions with respect to a nest. The main theorem, Theorem~\ref{NodalDom}, shows the Courant bound in this abstract setting. From this theorem Corollary~\ref{corollary:courant_Q_nodal} and Corollary~\ref{corollary:reg:NodalDom} can be deduced directly.
The major challenge in the proof is to show the existence and finiteness of the number of nodal domains in our setting. Here, we employ a result which is strongly inspired by a theorem of Gerlach, \cite{Gerlach}. Once one has established the notion of a nodal and has proven their existence and  finiteness, the proof of the Courant bound follows by  standard arguments.

To give an idea of the wide range of applicability of our results  we discuss a selection of specific examples. We start with the case of regular Dirichlet forms, Section~\ref{section:regular}, and become more specific by considering graphs, Section~\ref{section:graphs} and local Dirichlet forms in Section~\ref{section: local}. In Section~\ref{section: local} we consider the general case, Section~\ref{section:general_local}, elliptic differential operators, Section~\ref{section:elliptic} and forms with a doubly Feller resolvent, Section~\ref{section:Feller}.

On the technical level we need various operator theoretical facts for positivity preserving forms in the wide sense. In a somewhat more restricted settings these facts can be found in standard textbooks such as \cite{Fuku,reed_simon_IV} and they are  certainly well known to experts. However, we were not able to give a proper reference of these facts as they are applied in the setting of this paper. Therefore, for the reader's convenience  we included them in a series of appendices at the end of the paper.

\section{Irreducibility and connected components}\label{subsection:connected_components}
For the whole paper, let $X$ be a topological space and let $m$ be a measure on  $\B(X)$ of full support such that $L^2(X,m)$ is separable.   
Except for Section~\ref{section:regular}, these are the only assumptions on the underlying space.

For a measurable $A\subseteq X$, we will consider $L^2(A,m)$ as a subspace of $\ltwo$. We will not distinguish between an operator $T:L^2(A,m)\to L^2(A,m)$ and 
its extension to $\ltwo$ by $$Tf:=T(f1_A),\qquad f\in\ltwo.$$

In this section we introduce the notions of irreducibility and $Q$-connected components.  For the detailed definitions and properties  we refer to the appendices. Here, we summarize only the most important facts.

A \emph{form in the wide sense} $ Q $ is a form whose domain $ D(Q) $ is not necessarily dense in  $ L^{2}(X,m) $, see Appendix~\ref{section:forms}.  The resolvent $ G_{\alpha} $, $ \alpha>0 $,  of a closed form in the wide sense $ Q $ on $ L^{2}(X,m) $ is characterized  via the equality $ Q(G_\alpha f, g)+\alpha \langle G_\alpha f,g\rangle_\ltwo =\langle f,g\rangle_\ltwo$, $ f\in L^{2}(X,m),g\in D(Q)  $, Lemma~\ref{FormReso}. In the case when   $ Q $ is  \emph{positivity preserving}, i.e., $ Q $ is closed and $ Q(|f|)\leq Q(f) $ for all $ f\in D(Q) $, the resolvents map non-negative   functions to non-negative functions,  Lemma~\ref{ResPosPres}. In the following $ Q $ will always denote a positivity preserving form in the wide sense.

The concept of $Q$-connected components will be used in the next section to define nodal domains. The results in this section show that there exist nodal domains, and 
that the number of nodal domains is finite. To this end, we need the notion of $Q$-invariant sets, discussed in Appendix~\ref{section:invariance}. A measurable set $ A \subseteq X$ is called $ Q $-invariant if the multiplication operator of the characteristic function of $ A $ commutes with the resolvents, Definition~\ref{definition:invariance}. In this case for every $f\in D( Q) $  we have $ 1_{A}f\in D(Q) $ and $ Q(f)=Q(1_{A}f) +Q(1_{X\setminus A}f) $, Lemma~\ref{CharInv}.

\begin{definition}\label{definition:irreducible}
A positivity preserving form in the wide sense $Q$ is called \emph{irreducible} if every $Q$-invariant set $A$ satisfies $m(A)=0$ or $m(X\setminus A)=0$. 

We call a $Q$-invariant set $A$ with $m(A)>0$ a \emph{$Q$-connected component} if  $$ Q_A=Q|_{D(Q_A)}\quad\mbox{ with }D(Q_A)=\{u\in D(Q)\colon u=0 \text{ a.e. on } X\setminus A\} $$ 
is irreducible.     
\end{definition}
\begin{remarks}
 Irreducibility of a form in the wide sense can in some sense be seen as connectedness of the underlying space viewed through the eyes of the form. 
 Invariance of a set, on the other hand, means that for the form there is no interaction between the set and the complement of the set. In this sense, $Q$-connected components can be seen as connected components.
\end{remarks}

\begin{lemma}\label{IrredTrans}
 Let $Q$ be a positivity preserving form in the wide sense and let $A\subseteq X$ be invariant and of positive measure. Then, every  $Q_A$-connected component $B\subseteq A$ is a $Q$-connected component.
\end{lemma}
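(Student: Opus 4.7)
The plan is to verify each of the three defining properties of a $Q$-connected component for $B$: $Q$-invariance, positive measure, and irreducibility of $Q_B$. Positive measure is immediate since $B$ is a $Q_A$-connected component by assumption. The work is in the other two.

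First, I would identify the resolvent of $Q_A$ as the restriction of the resolvent of $Q$. Let $G_\alpha$ denote the resolvent of $Q$ and $G_\alpha^A$ the resolvent of $Q_A$ on $L^2(A,m)$. For $f\in L^2(X,m)$, I would show that $G_\alpha(1_A f) = G_\alpha^A(1_A f)$. The invariance of $A$ gives $1_A G_\alpha(1_A f) = G_\alpha(1_A f)$, so this element is supported on $A$ and therefore lies in $D(Q_A)$. For any test function $g\in D(Q_A)\subseteq D(Q)$ one has $Q(G_\alpha(1_A f),g)=Q_A(G_\alpha(1_A f),g)$ and $\langle 1_A f,g\rangle=\langle f,g\rangle$, so the characterization of $G_\alpha^A$ from Lemma~\ref{FormReso} identifies the two.

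Second, I would deduce $Q$-invariance of $B$ from $Q_A$-invariance of $B$. For $f\in L^2(X,m)$, decompose
\begin{equation*}
G_\alpha f = G_\alpha(1_A f) + G_\alpha(1_{X\setminus A} f).
\end{equation*}
Since $X\setminus A$ is itself $Q$-invariant, the second summand is supported on $X\setminus A$ and is annihilated by $1_B$ (using $B\subseteq A$). For the first summand, the identification above plus $Q_A$-invariance of $B$ gives
\begin{equation*}
1_B G_\alpha(1_A f) = 1_B G_\alpha^A(1_A f) = G_\alpha^A(1_B f) = G_\alpha(1_B f),
\end{equation*}
where in the last step $1_B f = 1_A 1_B f$ is used together with the identification again. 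Combining these yields $1_B G_\alpha f = G_\alpha(1_B f)$, so $B$ is $Q$-invariant.

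Third, I would compare the restricted forms. From $D(Q_A)=\{u\in D(Q): u=0\text{ on }X\setminus A\}$ and the definition of the restriction to $B\subseteq A$, a direct unfolding gives
\begin{equation*}
D((Q_A)_B) = \{u\in D(Q) : u = 0 \text{ a.e. on }X\setminus B\} = D(Q_B),
\end{equation*}
and the forms agree on this common domain. Hence irreducibility of $(Q_A)_B$, which holds because $B$ is a $Q_A$-connected component, transfers directly to $Q_B$. Together with the previous two steps, this shows that $B$ is a $Q$-connected component.

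I expect the main obstacle to be the first step, since it is the technical bridge that lets one freely interchange resolvents of $Q$ and $Q_A$; once this identification is in hand the rest amounts to routine bookkeeping with the characteristic functions $1_A$, $1_B$, and $1_{X\setminus A}$.
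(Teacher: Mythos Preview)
Your proof is correct and follows essentially the same route as the paper. The paper's argument is more modular: it simply cites Lemma~\ref{InvTrans}(a) for the $Q$-invariance of $B$ (whose proof in turn relies on Lemma~\ref{RestrRes} for the resolvent identification you establish in your first step) and invokes the remark that $(Q_A)_B=Q_B$; you have reproved these auxiliary facts inline rather than quoting them.
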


\begin{proof} 
By Lemma \ref{InvTrans} and the definition of $Q_A$-connected components the set $B$ is $Q$-invariant. By assumption the restriction $(Q_A)_B$ of $Q_A$ to $B$ is irreducible.
But we have $(Q_A)_B=Q_B$ and, hence, $Q_B$ is irreducible as well.
This concludes the proof.
\end{proof}

The following proposition shows that given a positivity preserving form in the wide sense with compact resolvent one can decompose the space $X$ in finitely many $Q$-connected components. 
The proof is similar to the proof of Theorem 5.1.6 in \cite{Gerlach}, where such a result is shown for non-symmetric positivity preserving semigroups on Banach lattices.
 This result will be used to decompose the spaces $\{f>0\}$ and $\{f<0\}$ for an eigenfunction $f$ of $Q$ into finitely many pieces. These pieces will be the nodal domains and the proposition
 shows that there are finitely many of them.  
  
\begin{proposition}[Gerlach's lemma]\label{Decomp}
Let $Q$ be a positivity preserving form in the wide sense with  compact resolvent $ G_{\alpha} $, $ \alpha>0 $. Let $f\in\ltwo$, $f>0$ be such that $G_1f\geq cf$ for a constant $c>0$.  Then, there 
are disjoint $Q$-connected components $C_1,\ldots,C_{l}$  such that $X=\bigcup_{i=1}^l C_i$. This decomposition is unique up to sets of measure zero. 
\end{proposition}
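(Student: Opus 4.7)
The plan is to construct the decomposition by building a maximal disjoint family of $Q$-connected components; compactness of $G_1$ will force finiteness of the family, and the subeigenfunction inequality $G_1f\ge cf$ will prevent any invariant piece of positive measure from being missed.

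I would first extract a spectral lower bound. Iterating $G_1f\ge cf$ yields $G_1^n f\ge c^n f$ pointwise, hence $\|G_1^n f\|_2\ge c^n\|f\|_2$; by Gelfand's formula the spectral radius satisfies $r:=r(G_1)\ge c>0$. Compactness of $G_1$ then makes the spectral subspace $E$ of $G_1$ corresponding to eigenvalues in $[c,r]$ finite dimensional. For any $Q$-invariant $A\subseteq X$ with $m(A)>0$, the characteristic function $1_A$ commutes with $G_1$, so $G_1(1_Af)=1_A G_1f\ge c\cdot 1_A f$; a Krein--Rutman--Jentzsch type statement for compact positivity preserving operators with positive spectral radius (to be drawn from the appendices) then produces a non-negative eigenfunction $\phi_A\in L^2(A,m)$ of $G_1$ with eigenvalue in $[c,r]$, so $\phi_A\in E$.

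Disjoint invariant sets of positive measure yield orthogonal members of $E$, hence there can be at most $\dim E$ pairwise disjoint $Q$-invariant sets of positive measure. The same argument obstructs any infinite strictly descending chain $A_1\supsetneq A_2\supsetneq\dots$ of $Q$-invariant sets, since the differences $A_n\setminus A_{n+1}$ are themselves $Q$-invariant and would yield an infinite disjoint family if they all had positive measure. Combining this with the separability of $L^2(X,m)$ to reduce arbitrary chains in the measure algebra to countable cofinal ones, a Zorn-type argument then produces a minimal $Q$-invariant $C\subseteq X$ of positive measure. Since any $Q_C$-invariant subset of such a $C$ is automatically $Q$-invariant (invariance of $C$ lets $G_1$ split along $L^2(C,m)\oplus L^2(X\setminus C,m)$), minimality forces $Q_C$ to be irreducible, and $C$ is a $Q$-connected component.

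Taking a maximal pairwise disjoint family $C_1,\dots,C_l$ of $Q$-connected components (finite by the dimension bound), I would argue that $B:=X\setminus\bigcup_iC_i$ must be null: otherwise $B$ is $Q$-invariant with $1_B f$ a strictly positive subeigenfunction, so the minimality argument applied inside $B$ produces a $Q_B$-connected component $C\subseteq B$, which by Lemma~\ref{IrredTrans} is a $Q$-connected component disjoint from all $C_i$, contradicting maximality. Uniqueness is then routine: for any second decomposition $\{D_j\}$, each $C_i\cap D_j$ is $Q$-invariant in both $C_i$ and $D_j$, and irreducibility of $Q_{C_i}$ and $Q_{D_j}$ forces $\{D_j\}$ to coincide with $\{C_i\}$ up to null sets. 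The main obstacle I expect is the Zorn step, in particular transferring the finite-dimensional obstruction from $E$ into a halting condition for descending chains in the measure algebra, and ensuring the Krein--Rutman--Jentzsch input behaves correctly for positivity preserving forms in the wide sense.
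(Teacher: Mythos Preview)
Your approach is correct in outline but takes a genuinely different route from the paper. The paper never invokes spectral subspaces or Zorn; it argues by sequential compactness directly. To produce a connected component it assumes none exists, builds a nested sequence of invariant sets $A_n$ with $\|f1_{A_n}\|\leq 2^{-n}\|f\|$, and observes that $g_k=f1_{A_k}/\|f1_{A_k}\|$ has $(G_1g_k)$ subconverging to some $g$ with $\|g\|\geq c$ (from $G_1f\geq cf$ and invariance) yet $\langle f,g\rangle=0$, a contradiction since $f>0$ and $g\geq 0$. Finiteness of the inductively built family $C_1,C_2,\dots$ follows by the same trick applied to $f1_{C_k}/\|f1_{C_k}\|$; uniqueness is as you sketch.

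Your spectral route also works, and more simply than you fear. Since $G_1$ is compact \emph{and self-adjoint} (the form is symmetric), no Krein--Rutman--Jentzsch machinery is needed: for invariant $A$ of positive measure the restriction $G_1|_{L^2(A,m)}$ has operator norm $\geq c$, and that norm is an eigenvalue with an eigenfunction in $L^2(A,m)$, which by invariance already lies in $E$. This bounds the number of pairwise disjoint invariant sets of positive measure by $\dim E$, and then Zorn becomes unnecessary too: repeated splitting of any invariant set of positive measure must halt in at most $\dim E$ steps, producing a connected component directly. Your argument buys the quantitative bound $l\leq\dim E$; the paper's is more self-contained and avoids spectral facts not established in its appendices (in particular, nothing like Krein--Rutman appears there, contrary to your expectation).
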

\begin{proof}

First assume that there is no $Q$-connected component. 
Inductively, we construct a nested sequence of $Q$-invariant sets $A_n$, $ n\geq 0 $, such that $\|f1_{A_n}\|_\ltwo\leq\frac{1}{2^n}\|f\|_\ltwo$. This is done the following way: Let $ A_{0}=X $. Now, suppose we have already constructed a $Q$-invariant set $A_n$. By assumption, $A_n$ is not a $Q$-connected component. Thus, there are disjoint
$Q_{A_n}$-invariant sets $B,B'$ of positive measure with $A_n=B\cup B'$. Without loss of generality,  $ B $ satisfies \[\|f1_{B}\|_\ltwo\leq \frac12 \|f1_{A_n}\|_\ltwo\leq\frac{1}{2^{n+1}}\|f\|_\ltwo\] 
while the second inequality follows then by the induction hypothesis.
By the first part of Lemma~\ref{InvTrans} the $Q_{A_n}$-invariant set $B$ is $Q$-invariant as well. We set $A_{n+1}=B$. Then,  the sequence defined by \[g_k=\frac{f1_{A_k}}{\|f1_{A_k}\|_\ltwo},\qquad k\in\N\] is bounded in $\ltwo$ and, by compactness, $(G_1 g_k)$ is (without loss of generality) $\ltwo$-convergent with limit $g$. We deduce \[\|g\|=\lim\limits_{k\to\infty}\frac{\|G_1(1_{A_k}f)\|_\ltwo}{\|f1_{A_k}\|_\ltwo}=\lim\limits_{k\to\infty}\frac{\|1_{A_k}G_1f\|_\ltwo}{\|f1_{A_k}\|_\ltwo}\geq c>0,\] where we used the $Q$- (and, thus, $G_1$-) invariance of $A_k$ and $G_1f\geq cf>0$.

On the other hand, \[g=g1_{A_k}\] holds for every $k$, since $A_k$ is a nested sequence and every $A_k$ is invariant with respect to $G_1$. Furthermore, by $\|f1_{A_k}\|_\ltwo\leq\frac{1}{2^k}\|f\|_\ltwo$ we infer 
\[f=\lim\limits_{k\to\infty} f1_{X\setminus A_k}\quad\text{in }L^2(X,m).\] This yields \[\langle f,g\rangle_\ltwo=\lim\limits_{k\to\infty}\langle f1_{X\setminus A_k},g\rangle_\ltwo=\lim\limits_{k\to\infty}\langle f1_{X\setminus A_k} ,g1_{A_k}\rangle=0.\]
But, since $f>0$ and $g\geq 0$ almost everywhere, we get $\|g\|_\ltwo=0$, which is a contradiction to $\|g\|_\ltwo\geq c$. 

Hence, we conclude that there is a $Q$-connected component $C_1\subseteq A$.
 Inductively, we construct a family of disjoint $Q$-connected components $C_i$ the following way: 
Suppose we have  disjoint $Q$-connected components $C_1,\ldots, C_n$. If $m(X\setminus \bigcup_{i=1}^n C_i)=0$, then we stop. Otherwise, we consider $Y=X\setminus \bigcup_{i=1}^n C_i$. Then, $Y$ is a $Q$-invariant 
set since the $Q$-invariant sets are a $\sigma$-algebra. Hence, by Proposition~\ref{RestrCpt} and Lemma~\ref{RestrRes} the form $Q_Y$ is a positivity preserving form 
in the wide sense with compact resolvent given by $G_\alpha^Y=G_\alpha|_{L^2(Y,m)},\alpha>0$. Furthermore, by $Q$-invariance of $Y$, we have \[G_1^Y (1_Y f)=G_1(1_Yf)=1_YG_1 f\geq c1_Yf.\] 
Hence, we can apply what we have proven for $Q$ above to $Q_Y$ as well and we infer the existence of a $Q_Y$-connected component $C_{n+1}$. 
By Lemma~\ref{IrredTrans} the $Q_Y$-connected component $C_{n+1}$ is a $Q$-connected component as well.

Suppose the family $(C_i)$ is infinite. Then, the  sequence defined by \[g_k=\frac{f1_{C_k}}{\|f1_{C_k}\|_\ltwo}, \qquad k\in\N,\] is bounded in $\ltwo$ and, by compactness, $(G_1 g_k)$ is (without loss of generality) $\ltwo$-convergent to a limit $g$. We deduce \[\|g\|=\lim\limits_{k\to\infty}\frac{\|G_1(1_{C_k}f)\|_\ltwo}{\|f1_{C_k}\|_\ltwo}=\lim\limits_{k\to\infty}\frac{\|1_{C_k}G_1f\|_\ltwo}{\|f1_{C_k}\|_\ltwo}\geq c>0,\] where we used the $Q$- (and, thus, $G_1$-) invariance of $C_k$ and $G_1f\geq cf>0$.

On the other hand, \[\langle g, G_1 g_k\rangle_\ltwo=0\] holds for every $k$, since the $C_k$ are disjoint and $Q$-invariant.  
This yields \[\langle g,g\rangle_\ltwo=\lim_{k\to\infty}\langle g,G_1 g_k\rangle_\ltwo=0.\]
Hence, we get $\|g\|_\ltwo=0$, which is a contradiction to $\|g\|_\ltwo\geq c$.  
Thus, the sequence $ (C_{n}) $ must be finite and we infer \[X=\bigcup_{i=1}^l C_i\] up to a null-set.

Finally, we show the uniqueness of the decomposition. Let $C_1,\ldots, C_l$ and $D_1,\ldots, D_k$ be two decompositions of $X$ into $Q$-connected components. Let $i\in\{1,\ldots,l\}$ 
be arbitrary. Then, we have  $C_i=\bigcup_{j=1}^k C_i\cap D_j $ up to a set of measure zero. By Lemma \ref{InvTrans} the sets  $C_i\cap D_j$ are $Q_{C_i}$-invariant. But since $Q_{C_i}$ is irreducible,
we infer $C_i\cap D_j=C_i$ for one $j$. Thus, $C_i\subseteq D_j$ up to a null-set. By interchanging the roles of $(C_n)$ and $(D_n)$ the result follows.
\end{proof}

\section{Nodal domains}\label{section:nodal_domains}
In this section we will prove the main theorem, Theorem~\ref{NodalDom}. 
 
Let $Q$ be a positivity preserving form with compact resolvent and let $f$ be a representative of an eigenvector with eigenvalue $\lambda$ for the remainder of this section. 
Define \[F^+:=\{x\in X\colon f(x)> 0\}\] and \[F^-:=\{x\in X\colon f(x)< 0\}.\] 

Our idea of nodal domains is the following: We restrict the form $Q$ to $F^\pm$ and apply Proposition~\ref{Decomp} to get a decomposition of $F^\pm$ in
connected components. These will then be called nodal domains. In order to do so, we first need to define a restriction of $Q$ to $F^\pm$.
Specifically we have two types of restrictions in mind which we want to treat simultaneously. In some very vague sense these restrictions carry parallels to putting Neumann- and Dirichlet boundary conditions on  $F^\pm$.
\begin{itemize}
\item[(a)] The first restriction $Q_{F^\pm}$ is taking all functions in $D(Q)$ that vanish almost everywhere outside of $F^\pm$.
\item[(b)] Secondly, if $Q$ is a regular Dirichlet form and $F^\pm$ is open (see Section~\ref{section:regular}) a different restriction is more natural, namely 
one takes the closure of $D(Q)\cap C_c(F^\pm)$ as domain for the restricted form.
\end{itemize}  
These two approaches are both captured by the following considerations.

Let $\mathcal{A}:=(A_n)_{n\in\IN}$ be an increasing sequence of measurable subsets 
of $X$.  Such a sequence will be called \emph{nest}.  The  \emph{restriction $Q_{F^\pm}^\mathcal{A}$ of $Q$ to $F^\pm$ with respect to $\mathcal{A}$}  is defined on $$D(Q_{F^\pm}^{\mathcal{A}})=\overline{\bigcup_{n=1}^\infty D(Q_{F^\pm\cap A_n})},$$ where the closure is taken in $(D(Q),\|\cdot\|_Q)$ where $ \|\cdot\|_{Q}^{2}=Q(\cdot)+\|\cdot\|^{2}_{L^{2}(X,m)}$.  For  details on restrictions of forms with respect to nests see Appendix~\ref{section:parts_of_forms}. 

For different representatives of $f$ one may get different sets $F^\pm$. 
However, these sets differ only by sets of measure zero. Therefore, the $L^2$-spaces on these sets are equal. Moreover, the forms $Q_{F^\pm\cap A_n}$ are equal, as well, as can be seen by the definition of $D(Q_{F^\pm\cap A_n})$. Thus, by definition,  we infer that the form $Q_{F^\pm}^{\mathcal{A}}$ does not depend on the choice of the representative of $f$.  Therefore, $Q_{F^\pm}^{\mathcal{A}}$ can be seen as a restriction with respect to a nest to the equivalence class of sets that are equal to $F^\pm$ up to a set of measure zero.
 
%
%
%

 Our next goal is to show that under the assumption $f^\pm\in D(Q_{F^\pm}^{\mathcal{A}})$ we can apply Proposition \ref{Decomp} to the forms $Q_{F^+}^{\mathcal{A}}$ and $Q_{F^-}^{\mathcal{A}}$,
 where for a function $g\in L^2(X,m)$ we define $g^+=g\vee 0$ and $g^-=(-g)\vee 0$. 
\begin{lemma}
 Let $Q$ be a positivity preserving form with compact resolvent and let $f$ be an eigenvector with eigenvalue $\lambda$.
 Let $\mathcal{A}$ be a nest such that $f^\pm\in D(Q_{F^\pm}^{\mathcal{A}})$.
 Then, the following inequality holds
\[G_1^{\mathcal{A},F^\pm} f^\pm\geq \frac{1}{1+\lambda}f^\pm.\]
\end{lemma}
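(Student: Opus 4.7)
The plan is to compare $u := G_1^{\mathcal{A}, F^+} f^+$ with $v := \frac{1}{1+\lambda} f^+$ directly, using the form characterization of the resolvent together with the positivity preserving property. Both $u$ and $v$ lie in $D(Q_{F^+}^{\mathcal{A}})$, by assumption and since the resolvent of a form in the wide sense maps into its form domain; the case of $F^-$ is completely analogous, so I will only treat $F^+$.

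First, I would establish the weak inequality $Q(v, g) + \langle v, g\rangle_\ltwo \leq \langle f^+, g\rangle_\ltwo$ for every non-negative $g \in D(Q_{F^+}^{\mathcal{A}})$. By definition of the nest restriction, such $g$ is a $\|\cdot\|_Q$-limit (hence $L^2$-limit) of functions supported in $F^+$, so $g$ vanishes outside $F^+$ and therefore $\langle f, g\rangle_\ltwo = \langle f^+, g\rangle_\ltwo$. Splitting $f = f^+ - f^-$ in the eigenvalue identity $Q(f, g) = \lambda \langle f, g\rangle_\ltwo$ yields
\[ Q(f^+, g) = \lambda \langle f^+, g\rangle_\ltwo + Q(f^-, g). \]
Since $f^-$ and $g$ are non-negative with disjoint supports, applying the positivity preserving inequality $Q(|h|) \leq Q(h)$ to $h := f^- - g$ (for which $h^+ = f^-$ and $h^- = g$) gives $Q(f^-, g) \leq 0$. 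Multiplying the display by $\tfrac{1}{1+\lambda}$ and adding $\langle v, g\rangle_\ltwo = \tfrac{1}{1+\lambda}\langle f^+, g\rangle_\ltwo$ produces the desired weak inequality.

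Second, I would apply a weak maximum principle: since $u$ satisfies $Q(u, g) + \langle u, g\rangle_\ltwo = \langle f^+, g\rangle_\ltwo$ for all $g \in D(Q_{F^+}^{\mathcal{A}})$, subtraction gives $Q(v - u, g) + \langle v - u, g\rangle_\ltwo \leq 0$ for every $g \geq 0$ in the domain. Testing with $g := (v - u)^+$ and writing $w := v - u$, one has $\langle w, w^+\rangle_\ltwo = \|w^+\|_\ltwo^2$ and, by positivity preservation applied to $w$, $Q(w, w^+) = Q(w^+) - Q(w^-, w^+) \geq Q(w^+) \geq 0$. Consequently $\|w^+\|_\ltwo^2 \leq 0$, so $(v - u)^+ = 0$ and $u \geq v$, which is exactly the claim.

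The main technical obstacle is verifying that $(v - u)^+$ actually belongs to $D(Q_{F^+}^{\mathcal{A}})$, i.e.\ that the nest restriction inherits stability of its domain under the positive-part operation from $Q$, and that its resolvent is characterized by the form equation in the expected way. I expect these structural facts, together with the observation that $Q_{F^+}^{\mathcal{A}}$ is again positivity preserving in the wide sense, to be established in the appendices on forms in the wide sense and on parts of forms with respect to nests. Once they are in place, the two steps above go through verbatim.
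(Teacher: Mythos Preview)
Your argument is correct and takes a genuinely different route from the paper's. The paper proceeds algebraically via two structural results proved in the appendix: the identity $G_1^{\mathcal{A},F^+}=P_{\mathcal{A},F^+}G_1$ (Proposition~\ref{ProjectRes}) and the domination $P_{\mathcal{A},F^+}h\leq h$ for $h\geq 0$ (Proposition~\ref{ProjectIneq}). One then writes
\[
G_1^{\mathcal{A},F^+}f^+=P_{\mathcal{A},F^+}G_1 f=\tfrac{1}{1+\lambda}\bigl(P_{\mathcal{A},F^+}f^+ - P_{\mathcal{A},F^+}f^-\bigr),
\]
observes $P_{\mathcal{A},F^+}f^+=f^+$ since $f^+\in D(Q_{F^+}^{\mathcal{A}})$, and bounds $P_{\mathcal{A},F^+}f^-\leq f^-=0$ on $F^+$. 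Your approach instead runs a direct weak maximum principle at the form level: you show that $v=\tfrac{1}{1+\lambda}f^+$ is a subsolution of the resolvent equation against nonnegative test functions, and then test with $(v-u)^+$. The technical point you flag, that $(v-u)^+\in D(Q_{F^+}^{\mathcal{A}})$, is indeed provided by the appendix (Lemma~\ref{InvPosPres} gives that $Q_{F^+}^{\mathcal{A}}$ is positivity preserving in the wide sense, whence Lemma~\ref{PosPartIneq} applies). Your method is more self-contained and closer in spirit to a PDE comparison argument; the paper's method is shorter once the projection machinery is in place and makes the role of the orthogonal projection explicit, which is a viewpoint reused elsewhere.
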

\begin{proof}
We only show the inequality for $f^+$, the other one follows analogously.

By Lemma~\ref{lemma:closed:eigenvalue_resolvent} we have $G_1 f=\frac{1}{\lambda+1}f$. Moreover, using the definition
 $G_1^{\mathcal{A},F^+}f:=G_1^{\mathcal{A},F^+}(1_{F^+}f)$ we infer $G_1^{\mathcal{A},F^+} f=G_1^{\mathcal{A},F^+} f^+$. 
 Furthermore, the equality  $G_1^{\mathcal{A},F^+} f^+=P_{\mathcal{A},F^+}G_1 f$ holds by Proposition~\ref{ProjectRes}, where $ P_{\mathcal{A},F^+} $ is the orthogonal projection from $ D(Q) $ to $ D(Q^{A}_{F^{+}}) $. Thus, one has 
 \begin{align*}G_1^{\mathcal{A},F^+} f^+&=G_1^{\mathcal{A},F^+} f=P_{\mathcal{A},F^+}G_1 f=\frac{1}{\lambda+1}P_{\mathcal{A},F^+}f\\&=\frac{1}{\lambda+1}P_{\mathcal{A},F^+}f^+-\frac{1}{\lambda+1}P_{\mathcal{A},F^+}f^-.\end{align*}
 Finally, Proposition~\ref{ProjectIneq} yields $P_{\mathcal{A},F^+}f^-\leq f^-= 0$ on $F^+$ and, therefore, 
 \[G_1^{\mathcal{A},F^+} f^+\geq \frac{1}{\lambda+1}P_{\mathcal{A},F^+}f^+=\frac{1}{\lambda+1}f^+,\] since $f^+\in D(Q_{F^+}^{\mathcal{A}})$ by assumption.
 This concludes the proof. 
\end{proof}
Thus, we get the following corollary.
\begin{corollary}
 Let $Q$ be a positivity preserving form with compact resolvent and let $f$ be a representative of an eigenvector with eigenvalue $\lambda$.  Let $\mathcal{A}$ be a nest such that $f^\pm\in D(Q_{F^\pm}^{\mathcal{A}})$. If  $m(F^+)>0$ (respectively $m(F^-)>0$), then, $F^+$ (respectively $ F^{-} $) can be uniquely decomposed (up to sets of measure zero) into finitely many $Q_{F^+}^{\mathcal{A}}$-connected components (respectively $Q_{F^.}^{\mathcal{A}}$-connected components).
\end{corollary}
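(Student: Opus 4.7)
The plan is to apply Proposition~\ref{Decomp} (Gerlach's lemma) to the restricted form $Q_{F^\pm}^{\mathcal{A}}$, with $f^\pm$ playing the role of the strictly positive witness function. By symmetry it suffices to treat $F^+$ and then carry out the same argument verbatim for $F^-$.

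First, I would verify that $Q_{F^+}^{\mathcal{A}}$ satisfies the hypotheses of Proposition~\ref{Decomp}, i.e.\ that it is a positivity preserving form in the wide sense on $L^2(F^+,m)$ with compact resolvent. Positivity preservation of a restriction to a nest should follow from the general machinery of restrictions developed in Appendix~\ref{section:parts_of_forms}, while compactness of the resolvent $G_1^{\mathcal{A},F^+}$ is inherited from compactness of $G_1$ in the spirit of Proposition~\ref{RestrCpt} (which was already invoked for restrictions to invariant sets inside the proof of Proposition~\ref{Decomp} itself). Both facts are available from the appendices and yield that $Q_{F^+}^{\mathcal{A}}$ fits exactly into the framework of Gerlach's lemma.

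Next, $f^+$ serves as the required strictly positive function: by the very definition $F^+=\{f>0\}$ we have $f^+>0$ almost everywhere on $F^+$, and the assumption $f^+\in D(Q_{F^+}^{\mathcal{A}})$ allows us to treat $f^+$ as an element of the restricted form domain. The preceding lemma furnishes the crucial pointwise lower bound
\[
G_1^{\mathcal{A},F^+} f^+\geq \frac{1}{1+\lambda}f^+,
\]
which is precisely the hypothesis $G_1 f\geq c f$ with constant $c=1/(1+\lambda)>0$ needed to invoke Proposition~\ref{Decomp}.

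With these pieces assembled, a direct application of Proposition~\ref{Decomp} to $Q_{F^+}^{\mathcal{A}}$ with witness $f^+$ decomposes $F^+$ into finitely many disjoint $Q_{F^+}^{\mathcal{A}}$-connected components, unique up to sets of measure zero; the identical argument applied to $f^-$ and $Q_{F^-}^{\mathcal{A}}$ handles $F^-$. The only step where any genuine work is required is confirming that compactness of the resolvent survives restriction with respect to a nest; this is the point I would scrutinize most carefully, whereas the rest is a straightforward substitution into Gerlach's lemma.
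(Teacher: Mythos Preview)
Your proposal is correct and matches the paper's approach exactly: the corollary is stated without explicit proof in the paper, being an immediate consequence of applying Proposition~\ref{Decomp} to $Q_{F^\pm}^{\mathcal{A}}$ with witness $f^\pm$, using the preceding lemma for the resolvent lower bound. The verifications you flag---positivity preservation and compact resolvent of the restriction with respect to a nest---are precisely Lemma~\ref{InvPosPres} and Proposition~\ref{RestrCpt} in the appendix, so there is no gap.
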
   
As discussed above, with a slight abuse of notation we will not distinguish between $Q_{F^\pm}^{\mathcal{A}}$-connected components and their equivalence classes with respect to the equivalence relation 
$A\sim B:\aeq m(A\Delta B)=0$ on measurable subsets of $X$. In this sense, $F^\pm$ can be uniquely decomposed into finitely many $Q_{F^\pm}^{\mathcal{A}}$-connected components.

Now suppose we have two different representatives of an eigenvector at hand. Then, as discussed earlier, both yield the same form $Q_{F^\pm}^{\mathcal{A}}$. Thus, both yield the same $Q_{F^\pm}^{\mathcal{A}}$-connected components. Hence, the following notion of nodal domains is well-defined. 

\begin{definition}
Let $Q$ be a positivity preserving form with compact resolvent and let  $f$ be an eigenvector with eigenvalue $\lambda$. 
Let $\mathcal{A}$ be a nest such that $f^\pm\in D(Q_{F^\pm}^{\mathcal{A}})$.
Then, we call the $Q_{F^\pm}^{\mathcal{A}}$-connected components \emph{$(Q,\mathcal{A})$-nodal domains}. Moreover,  
we call every $Q_{F^+}^{\mathcal{A}}$-connected component a \emph{positive} $(Q,\mathcal{A})$-nodal domain and every $Q_{F^-}^{\mathcal{A}}$-connected component a \emph{negative} $(Q,\mathcal{A})$-nodal domain.
\end{definition} 
Recall, that a connected component is assumed to have positive measure, so if either of the sets $F^+$ or $F^-$ is of measure zero, there are only negative or only positive $(Q,\mathcal{A})$-nodal domains.
\begin{lemma}\label{lemma:nodal_dom_Q_positive}
Let $Q$ be a positivity preserving form with compact resolvent.  Let $\lambda$ be an eigenvalue of $Q$ and $f$ be an eigenvector for $\lambda$. 
Let $\mathcal{A}$ be a nest such that $f^\pm\in D(Q_{F^\pm}^{\mathcal{A}})$.
Let $C_1,\ldots,C_l$ be the $(Q,\mathcal{A})$-nodal domains of $ f $.
 Then, $f1_{C_1},\ldots,f_{C_{l}}\in D(Q)$ and \[f=\sum_{i=1}^l f1_{C_i}\] holds. Furthermore, one has \[Q(f1_{C_i},f1_{C_j})\geq 0,\qquad i,j=1,\ldots,l.\] 
\end{lemma}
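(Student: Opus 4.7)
The lemma asserts three things: $f1_{C_i}\in D(Q)$, the decomposition $f=\sum_{i=1}^l f1_{C_i}$, and the bilinear inequality $Q(f1_{C_i},f1_{C_j})\ge 0$. I treat them in this order; the first two fall out of the invariance structure that defines nodal domains, while the last requires a case analysis depending on the signs of the two domains.

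For the membership and the decomposition, each positive $(Q,\mathcal{A})$-nodal domain $C_i\subseteq F^+$ is by definition a $Q_{F^+}^{\mathcal{A}}$-connected component, hence in particular $Q_{F^+}^{\mathcal{A}}$-invariant. The hypothesis on the nest gives $f^+\in D(Q_{F^+}^{\mathcal{A}})$, so Lemma~\ref{CharInv} applied to the form $Q_{F^+}^{\mathcal{A}}$ yields $f^+1_{C_i}\in D(Q_{F^+}^{\mathcal{A}})\subseteq D(Q)$; since $C_i\subseteq F^+$ we have $f1_{C_i}=f^+1_{C_i}$. The argument for negative nodal domains is symmetric. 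The preceding corollary (Gerlach's lemma applied to $Q_{F^\pm}^{\mathcal{A}}$) provides partitions of $F^+$ into its positive nodal domains and of $F^-$ into its negative ones, up to null sets, so $\sum_i f1_{C_i}=f1_{F^+}+f1_{F^-}=f^+-f^-=f$.

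For the bilinear inequality, set $g_k:=f1_{C_k}$ and split into three cases. If $i=j$, nonnegativity of $Q$ gives $Q(g_i,g_i)\ge 0$. If $i\ne j$ with $C_i,C_j$ both positive (or both negative), then $g_i$, $g_j$, and $g_i+g_j$ all lie in $D(Q_{F^\pm}^{\mathcal{A}})$, and Lemma~\ref{CharInv} applied to $g_i+g_j$ with the invariant set $C_i$ gives $Q(g_i+g_j)=Q(g_i)+Q(g_j)$; polarization then yields $Q(g_i,g_j)=0$. If $C_i$ is positive and $C_j$ is negative (or vice versa), set $u:=g_i\ge 0$ and $v:=-g_j\ge 0$; both lie in $D(Q)$ with disjoint essential supports, so $|u-v|=u+v$, and the positivity preservation inequality $Q(|u-v|)\le Q(u-v)$ expands bilinearly to $4Q(u,v)\le 0$. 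Hence $Q(g_i,g_j)=-Q(u,v)\ge 0$.

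\textbf{Main obstacle.} The crux is the opposite-sign case: extracting a sign for the cross term $Q(u,v)$ out of the scalar inequality $Q(|h|)\le Q(h)$ requires applying positivity preservation to $h=u-v$ and exploiting the disjointness of supports to identify $|h|$ with $u+v$. This Beurling--Deny-type manoeuvre is the only place in the proof where the positivity-preserving hypothesis on $Q$ is used in an essential way; the membership claim and the same-sign case rely solely on the invariance bookkeeping already recalled in Section~\ref{subsection:connected_components}.
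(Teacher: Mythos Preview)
Your proof is correct and follows essentially the same route as the paper: membership via invariance of $C_i$ for $Q_{F^\pm}^{\mathcal{A}}$ (the paper invokes invariance directly where you cite Lemma~\ref{CharInv}), the decomposition from the Gerlach partition of $F^\pm$, and the opposite-sign cross term handled by applying positivity preservation to a function whose absolute value is identified using disjointness of supports. The only cosmetic difference is that the paper applies $Q(|\,\cdot\,|)\le Q(\cdot)$ to $f1_{C_i}+f1_{C_j}$ directly, whereas you rename $u=g_i$, $v=-g_j$ and apply it to $u-v$; after substitution these are the same computation.
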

\begin{proof} At first we show $f1_{C_i}\in D(Q)$. 
Without loss of generality let $C_i$ be a positive $(Q,\mathcal{A})$-nodal domain.
 Then, $f1_{C_i}=f^+1_{C_i}$ is an element of $D(Q_{F^+}^{\mathcal{A}})$  by $Q_{F^+}^{\mathcal{A}}$-invariance of $C_i$ and since $f^+\in D(Q_{F^+}^{\mathcal{A}})$. 
 Hence, $f1_{C_i}$ is an element of $D(Q)$. 
  
  Note, that $f=\sum_i f1_{C_i}$ holds as $F^+$ and $F^-$ can be decomposed (up to a set of measure zero) into the $(Q,\mathcal{A})$-nodal domains and $f$ vanishes outside of 
  $F^+\cup F^-$ $m$-almost everywhere.
  
Finally we show $Q(f1_{C_i},f1_{C_j})\geq 0$ for all $i,j$. This is clear for $ i=j $. Moreover, if $C_i$ and $C_j$ have the same sign and $i\not=j$, 
  then by invariance one has $Q(f1_{C_i},f1_{C_j})= 0$. Otherwise, without loss of generality let $C_j$ be a negative
  and $C_i$ be a positive $(Q,\mathcal{A})$-nodal domain, i.e., $f=f^+$ on $C_i$ and $f=-f^-$ on $C_j$. Then, one has, since $C_i$ and $C_j$ are disjoint and 
  $Q$ is positivity preserving,
  \begin{align*}
    Q(f1_{C_i}-f1_{C_j})
    =&Q(f^+1_{C_i}+f^-1_{C_j})\\
	 =&Q(|f1_{C_i}+f1_{C_j}|)\\  
    \leq&Q(f1_{C_i}+f1_{C_j})
\end{align*}  
Now, $ Q(f1_{C_i}\pm f1_{C_j})=Q(f1_{C_i})\pm2Q(f1_{C_i},f1_{C_j})+Q(f1_{C_j}) $ yields the statement.
  \end{proof}
  
Next we give a technical lemma, which is essential for the proof of Theorem~\ref{NodalDom} below. It is a direct generalization of \cite[Lemma 1]{Davies}.
\begin{lemma}\label{SumLemma}
Let $Q$ be a positivity preserving form with compact resolvent.  Let $\lambda$ be an eigenvalue of $Q$ and $f$ be an eigenvector for $\lambda$. 
Let $\mathcal{A}:=(A_n)_{n\in\IN}$ be a nest such that $f^\pm\in D(Q_{F^\pm}^{\mathcal{A}})$.
Let $C_1,\ldots,C_l$ be the $(Q,\mathcal{A})$-nodal domains of $f$. Let $c_1,\ldots,c_k\in\IR$ be arbitrary and set $$v=\sum_{k=1}^l c_k f1_{C_k}.$$
Let $\mu>0$ be arbitrary. Then, the following equality holds
\begin{align*}
Q(v)-\mu\|v\|_\ltwo^2=&\sum_{i=1}^l c_i^2 (Q(f1_{C_i},f)-\mu\| f1_{C_i}\|_\ltwo^2)\\&-\frac 12\sum_{i,j=1}^l(c_i-c_j)^2Q(f1_{C_i},f1_{C_j}) .
\end{align*}
\end{lemma}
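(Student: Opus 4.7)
The identity is an essentially algebraic manipulation of a quadratic form and its polarization, relying crucially on the facts established in Lemma~\ref{lemma:nodal_dom_Q_positive}: namely that $f1_{C_i}\in D(Q)$ and $f=\sum_{i=1}^l f1_{C_i}$. Since $v$ is a finite linear combination of these elements, bilinearity of $Q$ gives $v\in D(Q)$ and the expansion
\[
Q(v)=\sum_{i,j=1}^l c_ic_j\,Q(f1_{C_i},f1_{C_j}).
\]
For the $L^2$-term I would use that the nodal domains $C_i$ are pairwise disjoint (up to measure zero), so the cross-terms $\langle f1_{C_i},f1_{C_j}\rangle_\ltwo$ vanish for $i\neq j$, yielding
\[
\|v\|_\ltwo^2=\sum_{i=1}^l c_i^2\,\|f1_{C_i}\|_\ltwo^2.
\]

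The main step is then the polarization identity $2c_ic_j=c_i^2+c_j^2-(c_i-c_j)^2$. Applying it to $Q(v)$ and exploiting the symmetry of $Q$, the $c_i^2+c_j^2$ part collapses to
\[
\sum_{i,j=1}^l c_i^2\,Q(f1_{C_i},f1_{C_j})=\sum_{i=1}^l c_i^2\,Q\!\left(f1_{C_i},\sum_{j=1}^l f1_{C_j}\right)=\sum_{i=1}^l c_i^2\,Q(f1_{C_i},f),
\]
where in the last equality I invoke $f=\sum_j f1_{C_j}$ together with continuity of $Q$ in its second argument (the sum is finite, so this is trivial). Combining these with the $-(c_i-c_j)^2$ remainder yields
\[
Q(v)=\sum_{i=1}^l c_i^2\,Q(f1_{C_i},f)-\tfrac12\sum_{i,j=1}^l(c_i-c_j)^2\,Q(f1_{C_i},f1_{C_j}).
\]

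Subtracting $\mu\|v\|_\ltwo^2=\mu\sum_i c_i^2\|f1_{C_i}\|_\ltwo^2$ and grouping the $c_i^2$-terms produces the claimed identity.

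There is no real obstacle here, as this is just polarization plus orthogonality; the only point that needs care is making sure one may use the representation $f=\sum_i f1_{C_i}$ inside $Q(f1_{C_i},\cdot)$, which is immediate since this sum is finite and $Q$ is bilinear on its domain. The non-trivial content (existence/finiteness of nodal domains, $f1_{C_i}\in D(Q)$, disjointness up to measure zero) has already been extracted in Lemma~\ref{lemma:nodal_dom_Q_positive}, so the present lemma is purely formal.
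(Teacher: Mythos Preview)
Your proof is correct and essentially identical to the paper's: both use disjointness of the $C_i$ for the $L^2$-term, expand $Q(v)$ bilinearly, and then rewrite $\sum_{i,j} c_ic_j\,Q(f1_{C_i},f1_{C_j})$ via the identity $2c_ic_j=c_i^2+c_j^2-(c_i-c_j)^2$ together with symmetry of $Q$ and $f=\sum_j f1_{C_j}$. The paper merely arranges the computation by starting from the double sum $\tfrac12\sum_{i,j}(c_i-c_j)^2Q(f1_{C_i},f1_{C_j})$ and rearranging, which is the same manipulation in reverse.
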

\begin{proof}
One has by disjointness (up to sets of measure zero) of $C_1,\ldots,C_l$\[-\mu\|v\|_\ltwo^2=-\mu\sum_{i=1}^l c_i^2\| f1_{C_i}\|_\ltwo^2=-\mu\sum_{i=1}^l \| c_i f1_{C_i}\|_\ltwo^2.\] Furthermore, the equality 
\begin{align*}
\frac{1}{2}\sum_{i,j=1}^l(c_i-c_j)^2Q(f1_{C_i},f1_{C_j})
&=\sum_{i,j=1}^l c_i^2Q(f1_{C_i},f1_{C_j})-\hspace{-.2cm}\sum_{i,j=1}^l c_i c_jQ(f1_{C_i},f1_{C_j})\\
&=\sum_{i=1}^l c_i^2Q( f1_{C_i},f)-Q(v)
\end{align*} 
holds. This concludes the proof.
\end{proof}      
Next we give the main theorem of this paper. The proof follows the ideas of the classical nodal domain theorem for graphs, c.f. \cite{Davies}.
\begin{theorem}[Courant nodal domain theorem]\label{NodalDom}
Let $Q$ be a positivity preserving form with compact resolvent. Let $\lambda_1,\lambda_2,\ldots$ be the eigenvalues of $Q$ in ascending order counted with multiplicity. Let $f$ be an eigenfunction for the eigenvalue $\lambda_n$ with   multiplicity  $ k $ such that $f^\pm\in D(Q_{F^\pm}^{\mathcal{A}})$ for a nest $\mathcal{A}$.
Then, $f$ has at most $(n+k-1)$ $(Q,\mathcal{A})$-nodal domains. 
\end{theorem}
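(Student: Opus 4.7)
The plan is to follow the classical Davies/Courant strategy adapted to this abstract setting: proceed by contradiction, assume $f$ has at least $n+k$ nodal domains, and use Lemma~\ref{SumLemma} together with the min-max principle to derive a contradiction. I would first arrange matters so that $n$ is the smallest index realizing the eigenvalue of $f$, giving $\lambda_{n-1}<\lambda_n=\lambda_{n+1}=\ldots=\lambda_{n+k-1}<\lambda_{n+k}$; since the claim is monotone in the chosen index this is the worst case.

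So suppose for contradiction that there are $(Q,\mathcal{A})$-nodal domains $C_1,\ldots,C_N$ with $N\geq n+k$. Fix an orthonormal family $\phi_1,\ldots,\phi_{n+k-1}$ of eigenfunctions of $L$ corresponding to $\lambda_1,\ldots,\lambda_{n+k-1}$. By Lemma~\ref{lemma:nodal_dom_Q_positive} each $f\mathbf{1}_{C_i}$ lies in $D(Q)$, and since the $C_i$ are pairwise disjoint (up to null sets) with $f$ non-vanishing on each, the family $\{f\mathbf{1}_{C_i}\}_{i=1}^N$ is linearly independent in $L^2(X,m)$. Viewing $(c_1,\ldots,c_N)\mapsto v=\sum c_i f\mathbf{1}_{C_i}$ as a linear map into $L^2$, the $n+k-1$ orthogonality conditions $\langle v,\phi_j\rangle=0$ are $n+k-1$ linear equations in $N\geq n+k$ unknowns, so I can select a non-trivial solution with $v\neq 0$.

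Next I apply Lemma~\ref{SumLemma} with $\mu=\lambda_n$. Since $f$ is an eigenfunction for $\lambda_n$, the identity $Q(f,g)=\lambda_n\langle f,g\rangle$ holds for every $g\in D(Q)$; in particular for $g=f\mathbf{1}_{C_i}$ this gives $Q(f\mathbf{1}_{C_i},f)=\lambda_n\|f\mathbf{1}_{C_i}\|^2$, so the first sum in Lemma~\ref{SumLemma} vanishes identically. The second sum is non-negative by the inequality $Q(f\mathbf{1}_{C_i},f\mathbf{1}_{C_j})\geq 0$ of Lemma~\ref{lemma:nodal_dom_Q_positive}, and enters with a minus sign, so I obtain $Q(v)-\lambda_n\|v\|^2\leq 0$. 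But since $v\in D(Q)$ is orthogonal to $\phi_1,\ldots,\phi_{n+k-1}$, the min-max principle (valid because $L$ has discrete spectrum by compactness of the resolvent) yields $Q(v)\geq \lambda_{n+k}\|v\|^2>\lambda_n\|v\|^2$, a contradiction.

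I expect the main technical obstacle to be essentially book-keeping: verifying that the min-max principle applies in this abstract positivity-preserving form setting (which follows from compact resolvent and selfadjointness of the associated operator $L$), and confirming that the eigenvalue identity $Q(f,g)=\lambda_n\langle f,g\rangle$ extends to all $g\in D(Q)$ including the specific $g=f\mathbf{1}_{C_i}$ -- but this is exactly what Lemma~\ref{lemma:nodal_dom_Q_positive} guarantees. All the genuine analytic work (existence and finiteness of the nodal domain decomposition, closure under $\mathbf{1}_{C_i}$-multiplication, non-negativity of cross Dirichlet forms) has been done in the preceding lemmas, leaving only the above clean linear-algebraic argument.
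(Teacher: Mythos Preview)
Your proposal is correct and follows essentially the same approach as the paper: construct a test function $v$ in the span of the $f\mathbf{1}_{C_i}$ orthogonal to an initial block of eigenfunctions, apply Lemma~\ref{SumLemma} with $\mu=\lambda_n$ together with Lemma~\ref{lemma:nodal_dom_Q_positive} to obtain $Q(v)\leq\lambda_n\|v\|^2$, and compare with the variational lower bound. The only cosmetic difference is that the paper argues directly (orthogonalizing against $\phi_1,\ldots,\phi_{l-1}$ to get $\lambda_l\leq\lambda_n$, hence $l<n+k$) while you phrase it as a contradiction (orthogonalizing against $\phi_1,\ldots,\phi_{n+k-1}$ to force $\lambda_{n+k}\leq\lambda_n$); your preliminary reduction to the smallest index $n$ is harmless but unnecessary, since $\lambda_{n+k}>\lambda_n$ holds automatically whenever $\lambda_n$ has multiplicity exactly $k$.
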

\begin{proof} Let $C_1,\ldots,C_l$ be the $(Q,\mathcal{A})$-nodal domains of $f$. Then, by definition of $(Q,\mathcal{A})$-nodal domains, $f1_{C_k}\not= 0$ and $\langle f1_{C_i},f1_{C_j}\rangle_\ltwo=0$ for all $k,i,j$, $i\not=j$.
 Therefore, the functions $f1_{C_1},\ldots,f1_{C_l}$ are orthogonal elements of $\ltwo$ and their span has dimension $l$. Now, choose $c_1,\ldots,c_l$ such that $v=\sum_{i=1}^l c_if1_{C_i}$
 is normalized and orthogonal to the $(l-1)$-dimensional span of eigenfunctions for $\lambda_1,\ldots,\lambda_{l-1}$. The  variational principle (Lemma~\ref{lemma:closed:varia_eigen}) yields \[Q(v)\geq \lambda_l.\]
  
  Observe, that \[Q(1_{C_i}f,f)-\lambda_n\|f1_{C_i}\|_\ltwo^2=\lambda_n(\langle f1_{C_i},f\rangle_\ltwo-\|f1_{C_i}\|_\ltwo^2)=0\] holds for every $i=1,\ldots l$. By $\|v\|_\ltwo=1$,  Lemma \ref{SumLemma} applied  with $\mu=\lambda_n$ yields \[Q(v)-\lambda_n=-\frac 12\sum_{i,j=1}^l(c_i-c_j)^2Q(f1_{C_i},f1_{C_j}).\]  Since by Lemma~\ref{lemma:nodal_dom_Q_positive} we know $Q(f1_{C_i},f1_{C_j})\geq 0$ for every $i,j$, we infer  
\[Q(v)-\lambda_n\leq 0\] and, in summary, $$\lambda_l\leq Q(v)\leq \lambda_n.$$ This yields \[\lambda_{n+k}>\lambda_n\geq \lambda_l,\] where the first inequality holds as $\lambda_n$ is the $n$-th eigenvalue and 
has multiplicity $k$. Since the eigenvalues are ordered by size, we get $l<n+k$, and, hence, $l\leq n+k-1$.
\end{proof}

\begin{remarks}
	The theorem above includes the setting of Schrödinger forms as it is described in \cite[Section~4]{StVoi96}. We refrain at this point from discussing explicit examples in this generality but we take a closer look at Dirichlet forms in the next section.
\end{remarks}

To end this section, we discuss the special case where the nest $\mathcal{A}$ is trivial in the sense $A_n=X$ for every $n$. Let $f$ be an eigenvector for $\lambda$. Then, we infer 
 $$Q_{F^\pm}^\mathcal{A}= Q_{F^\pm}$$ and, hence, $f^\pm\in D(Q_{F^\pm}^\mathcal{A})$. Therefore, the results above yield the existence of $(Q,\mathcal{A})$-nodal domains which we call \emph{$Q$-nodal domains} in this case. The preceding theorem immediately yields the following corollary.
\begin{corollary}\label{corollary:courant_Q_nodal}
Let $Q$ be a positivity preserving form with compact resolvent. Let $\lambda_1,\lambda_2,\ldots$ be the eigenvalues of $Q$ in ascending order counted with multiplicity. Every eigenfunction for the eigenvalue  $\lambda_n$ with 
 multiplicity $ k $ has at most $(n+k-1)$ $Q$-nodal domains. 
\end{corollary}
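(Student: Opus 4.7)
The plan is to derive the corollary as an immediate application of Theorem~\ref{NodalDom} with the trivial nest $\mathcal{A} = (A_n)_{n \in \IN}$ defined by $A_n = X$ for every $n$. With this choice each intersection $F^{\pm} \cap A_n$ reduces to $F^{\pm}$, so the increasing union $\bigcup_{n \geq 1} D(Q_{F^{\pm} \cap A_n})$ is just $D(Q_{F^{\pm}})$, which is already closed in the form norm $\|\cdot\|_Q$. Consequently $Q_{F^{\pm}}^{\mathcal{A}} = Q_{F^{\pm}}$ as forms, and the $(Q,\mathcal{A})$-nodal domains coincide by definition with the $Q$-nodal domains.

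The only thing that requires verification is then the hypothesis $f^{\pm} \in D(Q_{F^{\pm}}^{\mathcal{A}}) = D(Q_{F^{\pm}})$ of Theorem~\ref{NodalDom}. The vanishing condition $f^{\pm} = 0$ a.e.\ on $X \setminus F^{\pm}$ is immediate from the very definitions of $F^{+}$ and $F^{-}$, so it suffices to show $f^{\pm} \in D(Q)$. Here positivity preservation of $Q$ enters in a crucial but elementary way: applied to $f$ itself, the defining inequality $Q(|f|) \leq Q(f)$ shows $|f| \in D(Q)$, and then the identities $f^{+} = \tfrac{1}{2}(|f| + f)$ and $f^{-} = \tfrac{1}{2}(|f| - f)$ together with the fact that $D(Q)$ is a linear subspace yield $f^{\pm} \in D(Q)$.

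With the hypothesis of Theorem~\ref{NodalDom} verified, the theorem directly produces the bound $n + k - 1$ on the number of $(Q,\mathcal{A})$-nodal domains, which by the identification above is exactly the bound on the number of $Q$-nodal domains claimed in the corollary. No serious obstacle arises; the entire content is carried by Theorem~\ref{NodalDom}, and the role of this short proof is simply to observe that the trivial nest is always an admissible choice as soon as $Q$ is positivity preserving, since positivity preservation is precisely what makes the positive and negative parts of any $f \in D(Q)$ lie again in $D(Q)$.
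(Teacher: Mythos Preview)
Your proof is correct and follows essentially the same approach as the paper: you apply Theorem~\ref{NodalDom} with the trivial nest $A_n=X$, observe that $Q_{F^\pm}^{\mathcal{A}}=Q_{F^\pm}$, and verify the hypothesis $f^\pm\in D(Q_{F^\pm})$ via positivity preservation. The paper's argument is the same, though stated more tersely; your explicit use of $f^\pm=\tfrac12(|f|\pm f)$ is exactly the content of Lemma~\ref{PosPartIneq}.
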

In Section~\ref{section:graphs} we will see an example in which the $Q$-nodal domains coincide with the classical ones.
\section{Regular Dirichlet forms}\label{section:regular}
This chapter consists mainly of two classes of examples, namely Dirichlet forms on graphs and local Dirichlet forms. These will show that the notion of $(Q,\mathcal{A})$-nodal domains generalizes the classical ones for elliptical operators on $\IR^n$ and Laplacians on finite graphs.

In this whole section we make the standing assumption that we are given a locally compact, separable metric space $X$ equipped with a Radon measure $m$ on $\mathcal{B}(X)$ of full support. Furthermore, for a form $ Q $ with compact resolvent we denote the eigenvalues counted with multiplicity by $$  \lm_{1},\lm_{2},\ldots  $$ counted with multiplicity. We first introduce regular Dirichlet forms.

\begin{definition}\label{def:dirichlet_form}
 A positivity preserving form $Q$ on $D(Q)\subseteq L^2(X,m)$ is called \emph{Dirichlet form} if for every $u\in D(Q)$ we have $0\vee u\wedge 1\in D(Q)$ and 
 \[Q(0\vee u\wedge 1)\leq Q(u).\] Let $C_c(X)$ be the space of all compactly supported, continuous, real valued functions.
 A Dirichlet form is called \emph{regular} if $C_c(X)\cap D(Q)$ is both $\|\cdot\|_\infty$-dense in $C_c(X)$ and $\|\cdot\|_Q$-dense in $D(Q)$. 
\end{definition}

Note that the restriction $ Q_{A} $ as discussed above is not necessarily a \emph{regular} Dirichlet form. Therefore, we introduce a different type of restriction for which the regularity of the restricted form can be seen.

\begin{definition}
Let $Q$ be a regular Dirichlet form and let $A\subseteq X$ be open. Then, we define $Q^A$ as the restriction of $Q$ to $$D(Q^A):= \overline{C_c(A)\cap D(Q)}^{\|\cdot\|_Q}.$$
We call $Q^A$ the \emph{part of $Q$ on $A$}.
\end{definition}

For this type of restriction of regular Dirichlet forms the following useful lemma holds.
\begin{lemma}[\cite{Fuku}, Theorem 4.4.3]\label{lemma:reg:restr_open_reg}
 Let $Q$ be a regular Dirichlet form and  $\emptyset\not=A\subseteq X$ be open. Then,  $Q^A$ is a regular Dirichlet form on $L^2(A,m)$. 
\end{lemma}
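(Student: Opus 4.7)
The plan is to verify the three defining properties of a regular Dirichlet form in turn: that $Q^{A}$ is a closed symmetric form on $L^{2}(A,m)$, that it has the Markov/Dirichlet contraction property, and that $C_{c}(A)\cap D(Q^{A})$ is simultaneously $\|\cdot\|_{Q^{A}}$-dense in $D(Q^{A})$ and $\|\cdot\|_{\infty}$-dense in $C_{c}(A)$. Since $Q^{A}$ is \emph{defined} as the $\|\cdot\|_{Q}$-closure of $C_{c}(A)\cap D(Q)$ inside $(D(Q),\|\cdot\|_{Q})$, closedness is automatic, and the first half of regularity (form-norm density of the continuous core) is built into the definition.

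For the Dirichlet property I would argue first on the core $C_{c}(A)\cap D(Q)$ and then pass to the closure. If $u\in C_{c}(A)\cap D(Q)$, then the normal contraction $v=0\vee u\wedge 1$ is continuous, $\supp v\subseteq \supp u$ is compact in $A$, so $v\in C_{c}(A)$; also $v\in D(Q)$ with $Q(v)\leq Q(u)$ since $Q$ itself is a Dirichlet form. Hence $v\in C_{c}(A)\cap D(Q)\subseteq D(Q^{A})$ and the contraction inequality holds on the core. Extension to $D(Q^{A})$ follows by the standard approximation argument: normal contractions are non-expansive on $L^{2}$, the form is lower semicontinuous along $\|\cdot\|_{Q}$-convergent sequences, so a Cauchy sequence $(u_{n})$ in the core transfers the contraction property to the limit. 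I should also verify that $D(Q^{A})$ is dense in $L^{2}(A,m)$ (needed for $Q^{A}$ to be a form in the usual sense on that space); this uses regularity of $Q$ together with the fact that $C_{c}(A)$ is $L^{2}$-dense in $L^{2}(A,m)$ by outer regularity of $m$.

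The main work is the remaining half of regularity: given $u\in C_{c}(A)$, produce $u_{n}\in C_{c}(A)\cap D(Q)$ with $\|u_{n}-u\|_{\infty}\to 0$. The idea is to combine a sup-norm approximation coming from regularity of $Q$ with a cutoff that forces compact support inside $A$. Concretely, choose nested compact sets $K:=\supp u\subset K'\subset A$ and, using local compactness plus regularity of $Q$, produce a cutoff $\eta\in C_{c}(A)\cap D(Q)\cap L^{\infty}$ with $\eta\equiv 1$ on $K$, $0\leq\eta\leq 1$, $\supp\eta\subseteq K'$; such an $\eta$ exists because one can sup-norm-approximate a Urysohn function by elements of $C_{c}(X)\cap D(Q)$ and then apply normal contractions to stay bounded in $[0,1]$. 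Regularity of $Q$ also supplies $v_{n}\in C_{c}(X)\cap D(Q)$ with $\|v_{n}-u\|_{\infty}\to 0$. Then set $u_{n}:=\eta\,v_{n}$; the product lies in $C_{c}(A)$ with support in $K'$, and $\|u_{n}-u\|_{\infty}=\|\eta(v_{n}-u)\|_{\infty}\to 0$ since $\eta u=u$.

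The main obstacle I anticipate is precisely this last step: ensuring $\eta v_{n}\in D(Q)$. This requires the fact that $D(Q)\cap L^{\infty}$ is an algebra under pointwise multiplication for a regular Dirichlet form, a standard but nontrivial property established via the Markov contraction applied to polarized truncations of products. Once that algebra property is granted, the cutoff construction closes the argument, and one reads off from the definition that $u_{n}\in D(Q^{A})$, completing the proof that $C_{c}(A)\cap D(Q^{A})$ is uniformly dense in $C_{c}(A)$.
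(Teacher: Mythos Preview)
Your argument is correct and follows the standard route, but note that the paper does not actually prove this lemma: it is stated with a citation to \cite[Theorem~4.4.3]{Fuku} and no proof is given. What you have outlined is essentially the proof one finds in that reference (closedness and form-core density are immediate from the definition, the Markov property is inherited via approximation, and uniform density of $C_c(A)\cap D(Q)$ in $C_c(A)$ is obtained by multiplying a uniform approximant from $C_c(X)\cap D(Q)$ by a cutoff in $D(Q)\cap C_c(A)$, using that $D(Q)\cap L^\infty$ is an algebra).

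One small point worth tightening in your cutoff step: to guarantee $\eta u=u$ you need $\eta\equiv 1$ exactly on $K=\supp u$, not merely $\eta$ close to $1$ there. The clean way (as in Fukushima, Lemma~1.4.2) is to first uniformly approximate a Urysohn function that takes the value $2$ on $K$ and is supported in $K'$ by some $w\in C_c(X)\cap D(Q)$ to within $1/2$, so that $w\geq 1$ on $K$, and then set $\eta:=0\vee w\wedge 1$; this $\eta$ lies in $C_c(A)\cap D(Q)$, satisfies $0\leq\eta\leq 1$, and equals $1$ on $K$. With this adjustment your multiplication argument $u_n=\eta v_n$ goes through verbatim.
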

\begin{remarks}
It is possible that the equality $Q_A=Q^A$ holds. Sets with this property are called Kac-regular for $Q$. More informations on this property can be found in \cite{Stroock67, HZ87, BG17, W17}  Specifically, for regular Dirichlet forms on graphs, we will see in the next section that every set is Kac-regular.
\end{remarks}

Suppose we have a continuous eigenfunction $f$ of $Q$ to our disposal. Then, $F^\pm$ are open sets and, hence, $Q^{F^+}$ and $Q^{F^-}$ are regular Dirichlet forms. 
Since regular Dirichlet forms are quasi-regular, c.f. \cite[Section 1.3.]{ChenFuku}, there are nests $(A_n^\pm)$ of subsets of $F^\pm$ such that 
$D(Q^{F^\pm})=\overline{\bigcup_{n=1}^\infty D(Q_{A_n^\pm})}^{\|\cdot\|_Q}$. We define $\mathcal{A}$ via $A_n=A_n^+\cup A_n^-$.
Then, we infer $Q^{F^\pm}=Q_{F^\pm}^{\mathcal{A}}$.

Moreover, we deduce $f^\pm\in D(Q^{F^\pm})$ by continuity of $f^\pm$ and \cite[Theorem 4.4.3]{Fuku}. 
Therefore, this type of restriction is a special case of the one in Section~\ref{section:nodal_domains}. 
In particular, the $Q^{F^\pm}$-connected components are equal to the $(Q,\mathcal{A})$-nodal domains. 
 We call these representatives  
\emph{Dirichlet nodal domains.}
 
 We get the following corollary. It is a direct consequence of Theorem~\ref{NodalDom}. 
\begin{corollary}\label{corollary:reg:NodalDom}
Let $Q$ be a regular Dirichlet form with compact resolvent. 
 Then every  continuous eigenfunction   for the eigenvalue $\lambda_n$ with multiplicity $ k $  has at most $(n+k-1)$ Dirichlet nodal domains. 
\end{corollary}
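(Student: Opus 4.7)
The plan is to realize every Dirichlet nodal domain as a $(Q,\mathcal{A})$-nodal domain for a suitable nest $\mathcal{A}$, so that Theorem~\ref{NodalDom} applies verbatim and gives the bound $n+k-1$. The whole argument is essentially a packaging of the observations already made in the paragraph preceding the corollary.

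First I would exploit the continuity of $f$ to conclude that the superlevel and sublevel sets $F^\pm$ are open in $X$. Lemma~\ref{lemma:reg:restr_open_reg} then ensures that $Q^{F^+}$ and $Q^{F^-}$ are regular Dirichlet forms on $L^2(F^\pm,m)$, hence quasi-regular. Quasi-regularity supplies increasing sequences $(A_n^\pm)_{n\in\IN}$ of (compact) subsets of $F^\pm$ such that
\[
D(Q^{F^\pm})=\overline{\bigcup_{n\in\IN} D(Q_{A_n^\pm})}^{\|\cdot\|_Q}.
\]
Setting $A_n:=A_n^+\cup A_n^-$ produces a nest $\mathcal{A}=(A_n)_{n\in\IN}$ in $X$; since $A_n^+\subseteq F^+$ and $A_n^-\subseteq F^-$ are disjoint, $A_n\cap F^\pm=A_n^\pm$, and therefore
\[
D(Q_{F^\pm}^{\mathcal{A}})=\overline{\bigcup_{n\in\IN} D(Q_{A_n^\pm})}^{\|\cdot\|_Q}=D(Q^{F^\pm}),
\]
i.e., $Q_{F^\pm}^{\mathcal{A}}=Q^{F^\pm}$. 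Consequently the $(Q,\mathcal{A})$-nodal domains coincide with the $Q^{F^\pm}$-connected components, which are precisely the Dirichlet nodal domains.

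Next I need to check the hypothesis $f^\pm\in D(Q_{F^\pm}^{\mathcal{A}})=D(Q^{F^\pm})$ of Theorem~\ref{NodalDom}. Because $Q$ is a Dirichlet form, $f^\pm\in D(Q)$. Since $f$ is continuous, so are $f^\pm$, and they vanish outside the open sets $F^\pm$; the classical characterization \cite[Theorem 4.4.3]{Fuku} of $D(Q^{F^\pm})$ in terms of quasi-continuous functions supported in $F^\pm$ then yields $f^\pm\in D(Q^{F^\pm})$. With this input, Theorem~\ref{NodalDom} applies to the eigenfunction $f$ with respect to the nest $\mathcal{A}$ and delivers the bound $n+k-1$ on the number of $(Q,\mathcal{A})$-nodal domains, which is exactly the desired bound on the number of Dirichlet nodal domains.

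The only mildly subtle point, and thus the main obstacle, is the identification $Q_{F^\pm}^{\mathcal{A}}=Q^{F^\pm}$: one must be sure that the nest coming from quasi-regularity, when assembled into a single nest on $X$ via the union $A_n^+\cup A_n^-$, actually produces the part of $Q$ on the open set $F^\pm$ after taking the form-norm closure. Once this bookkeeping between the two restriction schemes (the measure-theoretic one from Section~\ref{section:nodal_domains} and the regular-Dirichlet one from this section) is unwound, the corollary is a direct application of Theorem~\ref{NodalDom}.
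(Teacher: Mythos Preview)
Your proposal is correct and follows essentially the same route as the paper: the paper's argument is precisely the discussion in the paragraph preceding the corollary (constructing $\mathcal{A}$ from nests $(A_n^\pm)$ via quasi-regularity, identifying $Q^{F^\pm}=Q_{F^\pm}^{\mathcal{A}}$, invoking \cite[Theorem~4.4.3]{Fuku} for $f^\pm\in D(Q^{F^\pm})$), after which the corollary is stated as a direct consequence of Theorem~\ref{NodalDom}. You have reproduced this faithfully, including the bookkeeping step $A_n\cap F^\pm=A_n^\pm$ that the paper leaves implicit.
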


\subsection{Dirichlet forms on graphs}\label{section:graphs}
A Radon measure $ m $ of full support on a finite or countably infinite non-empty set $ X$ equipped with the discrete topology is uniquely determined by a function $m:X\to (0,\infty)$ and we denote the corresponding space of square-summable functions by $\ell^2(X,m)$.
A \emph{graph}  $(b,c)$ over $ X $ is given by of functions $b:X\times X\to[0,\infty),c:X\to[0,\infty)$ such that $b$ 
is symmetric, vanishes on the diagonal and satisfies
 \[\sum_{y\in X} b(x,y)<\infty,\qquad x\in X.\]
We call $X$ the \emph{vertex set}, $b$ the \emph{edge weight} and $c$ the \emph{killing term}. The graph  is called \emph{connected} if for all $ x,y\in X $, $ x\neq y $, there is a sequence of vertices $ x=x_{0},\ldots,x_{n+1}=y $ such that
$b(x_i,x_{i+1})>0$  for all $ i=0,\ldots,n $. 

In \cite{StochCompl} it is shown that for every  a regular Dirichlet form  $Q$ on $\ell^2(X,m)$ there is a graph $ (b,c) $ over $ X $ such that for all  $ f\in D(Q) $
 $$Q(f)=\frac12\sum_{x,y\in X} b(x,y)(f(x)-f(y))^2+\sum_{x\in X} c(x)f(x)^2$$
 and vice versa every graph yields a regular Dirichlet form as above. 
 We say $ (b,c) $ and $ Q $ are \emph{associated}. 
 Furthermore, the functions of finite support $C_c(X)$ are included in $D(Q)$.

Given a subset $A\subseteq X$ we can define a graph $(b_A,c_A)$ on $A$ via \[b_A(x,y)=b(x,y), \quad c_A(x)=c(x)+\sum_{z\in X\setminus A} b(x,z)\] for $x,y\in A$.
We say that $A$ is \emph{connected} if the graph $(b_A,c_{A})$ is connected. We call $A$ a \emph{graph connected component} of $X$ if $A$ is a maximal (with respect to inclusion) connected subset of $X$.  
Let $C_c(A)$ be subspace  of all  functions in $ C_c(X) $ supported on $A$. 
The part  $Q^A$ of $Q$ on $A$ with domain
 $$ D(Q^A)=\overline{C_c(A)}^{\|\cdot\|_Q} $$ is a regular Dirichlet form and it is associated to the graph $ (b_{A},c_{A}) $.
Using \cite[Theorem 4.4.3]{Fuku} and the fact the every function on a discrete set is continuous, we infer $D(Q_A)=D(Q^A)$, and, hence, $$ Q_A=Q^A. $$

Our goal is to show that the Dirichlet nodal domains of an eigenfunction are exactly the graph nodal domains, i.e., the graph connected components of $F^+$ and $F^-$, respectively. For this, we first 
characterize $Q$-invariance.
\begin{lemma}[$ Q $-invariance]\label{lemma:graph:char_invariance} 
 Let $A\subseteq X$. Then, $A$ is $Q$-invariant if and only if $A$ is the union of graph connected components of $X$.
\end{lemma}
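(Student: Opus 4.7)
The proof will use the characterization of $Q$-invariance given by Lemma~\ref{CharInv}: a measurable set $A$ is $Q$-invariant if and only if $1_A f \in D(Q)$ and $Q(f) = Q(1_A f) + Q(1_{X\setminus A} f)$ for every $f \in D(Q)$. The whole proof will then be reduced to a direct computation using the explicit graph form
\[
Q(f)=\frac{1}{2}\sum_{x,y\in X}b(x,y)(f(x)-f(y))^{2}+\sum_{x\in X}c(x)f(x)^{2}.
\]

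\textbf{Step 1: a decomposition identity.} For any $f$ for which the sum makes sense I would split the double sum according to whether both, one, or none of $x,y$ lie in $A$. The diagonal-in-$A$ and diagonal-out-of-$A$ contributions to $Q(1_A f)+Q(1_{X\setminus A}f)$ match those of $Q(f)$ exactly, while for cross pairs the identity $(f(x)-f(y))^2=f(x)^2-2f(x)f(y)+f(y)^2$ yields
\[
Q(1_A f)+Q(1_{X\setminus A}f)-Q(f)=2\sum_{\substack{x\in A\\y\notin A}}b(x,y)f(x)f(y).
\]
This is the core identity: the defect from additivity is governed solely by the edges crossing between $A$ and $X\setminus A$.

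\textbf{Step 2: ``$\Leftarrow$''.} Assume $A$ is a union of graph connected components, so $b(x,y)=0$ whenever $x\in A$ and $y\notin A$. By Step 1, the additivity $Q(f)=Q(1_Af)+Q(1_{X\setminus A}f)$ holds for every $f$ for which the right-hand side is defined. It remains to see $1_A f\in D(Q)$ for $f\in D(Q)$. By regularity, choose $f_n\in C_c(X)$ with $f_n\to f$ in $\|\cdot\|_Q$; then $1_A f_n\in C_c(X)\subseteq D(Q)$, and the same computation (no cross edges) gives $Q(1_A(f-f_n))\le Q(f-f_n)\to 0$ and $\|1_A(f-f_n)\|_{\ell^2(X,m)}\le\|f-f_n\|_{\ell^2(X,m)}\to 0$. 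Hence $1_A f_n\to 1_A f$ in $\|\cdot\|_Q$, so $1_A f\in D(Q)$, and Lemma~\ref{CharInv} yields $Q$-invariance of $A$.

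\textbf{Step 3: ``$\Rightarrow$''.} Conversely, suppose $A$ is not a union of graph connected components; then there exist $x_0\in A$ and $y_0\in X\setminus A$ with $b(x_0,y_0)>0$. The indicator functions $\delta_{x_0},\delta_{y_0}$ lie in $C_c(X)\subseteq D(Q)$, so $f:=\delta_{x_0}+\delta_{y_0}\in D(Q)$. Plugging $f$ into the identity from Step 1 gives
\[
Q(1_A f)+Q(1_{X\setminus A}f)-Q(f)=2b(x_0,y_0)>0,
\]
so additivity fails and, by Lemma~\ref{CharInv}, $A$ is not $Q$-invariant.

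\textbf{Expected main obstacle.} The computational identity is routine; the only non-trivial point is verifying $1_A f\in D(Q)$ (and not merely that $Q(1_A f)$ is finite) in the forward direction, since $D(Q)$ is defined as a closure of $C_c(X)$ in the form norm. This is handled by the approximation argument above, which works cleanly precisely because no edges cross between $A$ and $X\setminus A$, so truncation by $1_A$ is contractive in $\|\cdot\|_Q$.
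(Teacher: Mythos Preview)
Your proof is correct and follows essentially the same route as the paper: both directions rest on Lemma~\ref{CharInv}, the ``$\Rightarrow$'' direction is handled by the same test function $\delta_{x_0}+\delta_{y_0}$ on a crossing edge, and the ``$\Leftarrow$'' direction comes from the vanishing of all cross terms. Your treatment is in fact more careful than the paper's: the paper simply asserts $D(Q)=D(Q_A)\oplus D(Q_{X\setminus A})$ without justifying $1_A f\in D(Q)$, whereas your approximation argument via $C_c(X)$ (using that truncation by $1_A$ is $\|\cdot\|_Q$-contractive when there are no crossing edges) supplies exactly this missing step.
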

\begin{proof}
 Assume that $A$ is not the union of graph connected components of $X$. Then, there are $x\in A$, $y\in X\setminus A$ such that $b(x,y)>0$. Then, the function 
 $1_x+1_y\in D(Q)$ satisfies $(1_x+1_y)1_A=1_x\in D(Q)$  and  $(1_x+1_y)1_{X\setminus A}=1_y\in D(Q)$. We calculate \[Q((1_x+1_y)1_A,(1_x+1_y)1_{X\setminus A})=Q (1_x,1_y)=b(x,y)\not= 0\] and, hence, $A$ is not invariant.
 
 Now suppose $A$ is a union of graph connected components of $X$. We infer $b(x,y)=0$ for all $x\in A,y\in X\setminus A$. 
 Therefore, we get the equality $D(Q)=D(Q_A)\oplus D(Q_{X\setminus A})$, where the sum is direct with respect to $Q$. 
\end{proof}
Next we will characterize irreducibility of $Q_A$.
\begin{lemma}[Irreducibility]\label{lemma:graph:char_irreducible} 
 Let $A\subseteq X$. Then, the form $Q_A$ is irreducible if and only if $A$ is connected. 
\end{lemma}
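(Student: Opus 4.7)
The plan is to reduce this to the previous lemma (Lemma~\ref{lemma:graph:char_invariance}) by noting that the restricted form $Q_A$ is itself a regular Dirichlet form on $\ell^2(A,m)$ associated to the graph $(b_A, c_A)$ over $A$. This identification is already established in the text via the equality $Q_A = Q^A$ and the preceding discussion. Once this is in hand, the previous lemma characterises the $Q_A$-invariant subsets of $A$ as precisely the unions of graph connected components of $(b_A, c_A)$. Since in the discrete setting $m(B) > 0$ is equivalent to $B \neq \emptyset$ (because $m$ has full support, hence $m(\{x\}) > 0$ for every $x \in X$), irreducibility of $Q_A$ is equivalent to the statement that $(b_A, c_A)$ has a unique graph connected component, which is exactly connectedness of $A$.

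More concretely, I would prove the two directions as follows. For the direction "$A$ connected $\Rightarrow Q_A$ irreducible," let $B \subseteq A$ be $Q_A$-invariant with $m(B) > 0$. By Lemma~\ref{lemma:graph:char_invariance} applied to the form $Q_A$ and its associated graph $(b_A, c_A)$, the set $B$ is a union of graph connected components of $A$. But $A$ is connected, so the only nonempty such union is $A$ itself, forcing $B = A$ and hence $m(A \setminus B) = 0$. For the direction "$Q_A$ irreducible $\Rightarrow A$ connected," I would argue contrapositively: if $A$ is not connected, then one can write $A = A_1 \cup A_2$ as a disjoint union of two nonempty subsets $A_1, A_2$ such that $b(x,y) = 0$ for all $x \in A_1$, $y \in A_2$. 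Each $A_i$ is then a union of graph connected components of $(b_A, c_A)$ and hence $Q_A$-invariant by Lemma~\ref{lemma:graph:char_invariance}. Since both have positive measure, this contradicts irreducibility of $Q_A$.

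The main obstacle, if any, is the correct bookkeeping when applying Lemma~\ref{lemma:graph:char_invariance} to $Q_A$ rather than $Q$ itself: one must verify that the restricted form $Q_A$ on $\ell^2(A,m)$ really is the regular Dirichlet form associated with the graph $(b_A, c_A)$, in the sense of the correspondence cited from \cite{StochCompl}. Fortunately this is precisely the content of the paragraph preceding this lemma (where $Q_A = Q^A$ is identified with the Dirichlet form of $(b_A, c_A)$), so the proof essentially consists of invoking the previous lemma for $(b_A, c_A)$ and observing that "union of graph connected components of $A$" has no nontrivial instance when $A$ itself is a single graph connected component. No further analytic work is required.
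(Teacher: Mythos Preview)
Your proposal is correct and follows essentially the same route as the paper's proof: both reduce to Lemma~\ref{lemma:graph:char_invariance} applied to the form $Q_A = Q^A$ on the graph $(b_A,c_A)$, using that invariant subsets of $A$ are exactly unions of graph connected components of $A$. The only cosmetic difference is that you argue the direction ``irreducible $\Rightarrow$ connected'' by contraposition, whereas the paper argues it directly; the content is identical.
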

\begin{proof}
 Let $Q_A=Q^A$ be irreducible. Thus, $A$ is the only non-empty $Q_A$-invariant subset of $A$. 
 Since, by  Lemma \ref{lemma:graph:char_invariance} applied to $Q^A$, every graph connected component of $A$ is $Q^A$-invariant, we infer that $A$ is a graph 
 connected component of $A$ and, thus, $A$ is connected.
 
 On the other hand, let $A$ be connected. Then, $A$ is the only graph connected component of $A$. But, since by Lemma \ref{lemma:graph:char_invariance} applied to $Q^A$
 the $Q^A$-invariant sets are the unions of graph connected components of $A$, the only non-empty $Q^A$-invariant set is $A$. 
 Therefore, $Q^A$ is irreducible.
\end{proof}
Now we are able to characterize Dirichlet nodal domains.
\begin{theorem}[Dirichlet nodal domains vs. graph nodal domains]\label{theorem:graph:Dirichlet_vs_graph}
 Let $(b,c)$ be a graph over   $(X,m)$ and assume that $Q$ has compact resolvent.
 Let $f$ be an eigenfunction of $Q$. Then, the Dirichlet nodal domains of $f$ are exactly the graph connected components of $F^+=\{f>0\}$ and $F^-=\{f<0\}$.
\end{theorem}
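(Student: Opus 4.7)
The plan is to chain the two preceding characterizations (Lemma~\ref{lemma:graph:char_invariance} and Lemma~\ref{lemma:graph:char_irreducible}) applied to the restricted form $Q^{F^\pm}$. The key preliminary observation is that in the discrete setting we have the identification $Q_A = Q^A$ from before the statement, so the Dirichlet nodal domains of $f$ coincide with the $Q_{F^\pm}$-connected components in the sense of Definition~\ref{definition:irreducible}. Moreover, $Q^{F^\pm}$ is itself a regular Dirichlet form (Lemma~\ref{lemma:reg:restr_open_reg}) and it is associated to the restricted graph $(b_{F^\pm}, c_{F^\pm})$ on $F^\pm$.

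First I would treat $F^+$ (the argument for $F^-$ is identical). Let $B \subseteq F^+$ be a Dirichlet nodal domain, i.e.\ a $Q^{F^+}$-connected component. By definition $B$ is $Q^{F^+}$-invariant of positive measure, and $(Q^{F^+})_B = Q_B$ is irreducible. Applying Lemma~\ref{lemma:graph:char_invariance} to the form $Q^{F^+}$ on $F^+$, invariance of $B$ means $B$ is a union of graph connected components of the graph $(b_{F^+}, c_{F^+})$ on $F^+$. Applying Lemma~\ref{lemma:graph:char_irreducible} to $Q^{F^+}$, the irreducibility of $(Q^{F^+})_B = Q^B$ forces $B$ itself to be connected as a subset of that graph. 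A connected union of graph connected components of $F^+$ must be a single graph connected component, so $B$ is exactly one graph connected component of $F^+$.

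For the converse, let $B$ be any graph connected component of $F^+$. Then $B$ is trivially a union of graph connected components of $F^+$, hence $Q^{F^+}$-invariant by Lemma~\ref{lemma:graph:char_invariance}, and $B$ is connected, hence $Q^B$ is irreducible by Lemma~\ref{lemma:graph:char_irreducible}. Since $m(B)>0$ (every vertex has positive measure), $B$ is a $Q^{F^+}$-connected component, i.e.\ a Dirichlet nodal domain. Performing the same argument on $F^-$ finishes the proof.

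The only real subtlety, and the thing to make explicit, is that the notion of connectedness used in Lemma~\ref{lemma:graph:char_irreducible} refers to the restricted graph $(b_{F^\pm}, c_{F^\pm})$ rather than to $(b,c)$; but since $b_{F^\pm}(x,y) = b(x,y)$ for all $x,y \in F^\pm$, paths inside $F^\pm$ in either graph are the same, so the graph connected components of $F^\pm$ coincide whether defined via $(b,c)$-paths staying in $F^\pm$ or via the restricted graph. Beyond bookkeeping this identification, no further work is required, as the main content is already carried by the two lemmas and the identity $Q_A=Q^A$ in the graph setting.
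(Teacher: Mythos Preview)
Your proof is correct and follows essentially the same approach as the paper: both reduce the characterization of Dirichlet nodal domains to the two lemmas on $Q$-invariance and irreducibility, applied to the restricted form $Q^{F^\pm}=Q_{F^\pm}$ associated to the restricted graph. The paper phrases the argument as a single equivalence rather than two separate directions, and invokes the identity $(Q_{F^+})_A=Q_A=Q^A$ to apply Lemma~\ref{lemma:graph:char_irreducible} directly on $X$, whereas you apply both lemmas on the graph $(b_{F^+},c_{F^+})$ over $F^+$; your extra remark reconciling connectedness in the restricted versus original graph is exactly what bridges these two viewpoints.
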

\begin{proof}
 We only discuss $F^+$. By definition, the positive Dirichlet nodal domains of $f$ are the $Q_{F^+}$-invariant subsets $A$ such that the restricted form 
 $(Q_{F^+})_A=Q_A=Q^A$ is irreducible.
 By Lemma~\ref{lemma:graph:char_irreducible} the irreducibility of $Q^A$ is equivalent to the connectedness of $A$. 
 Furthermore, by Lemma~\ref{lemma:graph:char_invariance} the 
 $Q_{F^+}$-invariance is equivalent to $A$ being the union of $F^+$-connected components. Hence, $A$ is a positive Dirichlet nodal domain if and only 
 if it is connected and the union 
 of graph connected components of $F^+$.  This is equivalent to $A$ being a graph connected component of $F^+$. 
\end{proof}
The previous theorem can be used to give the Courant bound for  \emph{graph nodal domains}, i.e. graph connected components of $F^+$ or $F^-$.
\begin{corollary}[Courant bound for graphs]
 Let $(b,c)$ be a graph over $(X,m)$ and assume that $Q$ has compact resolvent.
 Then, every eigenfunction  for the eigenvalue $\lambda_n$ with multiplicity $ k $   has at most $(n+k-1)$ graph nodal domains. 
\end{corollary}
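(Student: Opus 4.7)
The plan is to combine the two main ingredients already developed in this section: the general Courant bound for Dirichlet nodal domains of a regular Dirichlet form, Corollary~\ref{corollary:reg:NodalDom}, and the identification of Dirichlet nodal domains with graph nodal domains, Theorem~\ref{theorem:graph:Dirichlet_vs_graph}. Both of these are directly applicable to the setting of the corollary, so the proof should be essentially a one-line deduction after a small continuity observation.

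First I would check that Corollary~\ref{corollary:reg:NodalDom} indeed applies. The discussion at the start of Section~\ref{section:graphs} shows that $Q$ associated to $(b,c)$ is a regular Dirichlet form on $\ell^2(X,m)$, and by assumption it has compact resolvent. Moreover, since $X$ carries the discrete topology, every function on $X$ is continuous, so in particular any eigenfunction $f$ of $Q$ for the eigenvalue $\lambda_n$ is a continuous eigenfunction in the sense required by Corollary~\ref{corollary:reg:NodalDom}. Hence the corollary applies and gives that $f$ has at most $n+k-1$ Dirichlet nodal domains, where $k$ is the multiplicity of $\lambda_n$.

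Next I would invoke Theorem~\ref{theorem:graph:Dirichlet_vs_graph}, which identifies the Dirichlet nodal domains of $f$ with the graph connected components of $F^+=\{f>0\}$ and $F^-=\{f<0\}$, i.e., with the graph nodal domains of $f$. In particular, the number of Dirichlet nodal domains of $f$ equals the number of graph nodal domains of $f$. Combining this equality with the bound from the previous step yields the desired estimate of $n+k-1$ on the number of graph nodal domains.

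There is no real obstacle here; the only point worth being careful about is that the required continuity hypothesis in Corollary~\ref{corollary:reg:NodalDom} is automatic in the discrete setting, and that the choice of representative of $f$ plays no role because all sets of measure zero in $(X,m)$ are actually empty when $m$ has full support on the discrete space $X$. Thus the proof reduces to citing Corollary~\ref{corollary:reg:NodalDom} and Theorem~\ref{theorem:graph:Dirichlet_vs_graph} in succession.
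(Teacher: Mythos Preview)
Your proposal is correct and follows essentially the same approach as the paper: the paper's proof simply cites the previous theorem (Theorem~\ref{theorem:graph:Dirichlet_vs_graph}) together with Corollary~\ref{corollary:reg:NodalDom}. Your additional remark that continuity of eigenfunctions is automatic in the discrete setting is exactly the small point needed to make Corollary~\ref{corollary:reg:NodalDom} applicable.
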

\begin{proof}
 This follows easily using the previous theorem and Corollary~\ref{corollary:reg:NodalDom}.
\end{proof}

Finally, we  give two examples for which the corollary above can be applied.
\begin{example}  For finite sets $ X $ this result was proven in \cite{Davies}. 
\end{example}
\begin{example}
 Let $X$ be an infinite set. We call a graph $(b,c)$ over $X$ uniformly transient if there is a $C>0$ such that \[\|f\|_\infty^2\leq CQ(f), \qquad f\in C_{c}(X).\]
Equipping such graph with a finite measure, the form $Q$ has compact resolvent by  \cite[ Theorem~7.2]{UnifTrans}.
\end{example}
To end this subsection about graphs, we want to discuss an application of Corollary~\ref{corollary:courant_Q_nodal} to a non-regular form on graphs. Let $(b,c)$ be a graph over $(X,m)$. We define a quadratic form $\widetilde{Q}:C(X)\to[0,\infty]$, $$\widetilde{Q}(f)=\frac12\sum_{x,y\in X} b(x,y)(f(x)-f(y))^2+\sum_{x\in X} c(x)f(x)^2$$ and $$\widetilde{D}=\{f\in C(X)\colon \widetilde{Q}(f)<\infty\}.$$
Then, the restriction $Q^{(N)}$ of $\widetilde{Q}$ to $D(Q^{(N)})=\widetilde{D}\cap \ell^2(X,m)$ is a Dirichlet form. However, in general $Q^{(N)}$ is not regular, \cite{StochCompl}.
Let $A$ be a subset of $X$. Then, there is a quadratic form $\widetilde{Q}_A$ and  $\widetilde{D}_A$ associated to the graph $(b_A,c_A)$. Denote by $Q_{A}^{(N)}$ the restriction of  $Q^{(N)}$ to 
$\widetilde{D}_A\cap\ell^2(A,m)$. Then, a direct calculation shows the equality $$Q_A^{(N)}=(Q^{(N)})_A.$$ 
\begin{lemma}\label{lemma:graph:char_Neumann_invariance} 
 Let $A\subseteq X$. Then, $A$ is $Q^{(N)}$-invariant if and only if $A$ is the union of graph connected components of $X$.
Moreover, the form $(Q^{(N)})_A$ is irreducible if and only if $A$ is connected. 
\end{lemma}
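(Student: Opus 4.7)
The plan is to mirror the proofs of Lemma~\ref{lemma:graph:char_invariance} and Lemma~\ref{lemma:graph:char_irreducible}, exploiting the fact recorded just before the statement that $(Q^{(N)})_A = Q^{(N)}_A$ is the form associated to the induced graph $(b_A,c_A)$ over $A$. Thus everything can be phrased in the same algebraic language as in the regular case, the only extra care being that the test functions lie in the possibly smaller domain $\widetilde{D}\cap\ell^2(X,m)$ rather than in $C_c(X)$.

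For the invariance characterization I would use Lemma~\ref{CharInv}: $A$ is $Q^{(N)}$-invariant iff for every $f\in D(Q^{(N)})$ one has $1_A f\in D(Q^{(N)})$ with $\widetilde{Q}(1_A f,1_{X\setminus A}f)=0$. If $A$ is \emph{not} a union of graph connected components of $X$, choose $x\in A$, $y\in X\setminus A$ with $b(x,y)>0$. The function $1_x+1_y$ has finite support, so it lies in $\widetilde{D}\cap\ell^2(X,m)=D(Q^{(N)})$, and a direct computation gives $\widetilde{Q}(1_x,1_y)=-b(x,y)\neq 0$, violating invariance. Conversely, if $A$ is a union of graph connected components then $b(x,y)=0$ for all $x\in A$, $y\in X\setminus A$. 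For any $f\in D(Q^{(N)})$ the restrictions $1_A f$ and $1_{X\setminus A}f$ are continuous (automatic on a discrete space), lie in $\ell^2(X,m)$ by monotonicity of the $\ell^2$ norm, and their $\widetilde{Q}$-values are finite because the edges cut by $A$ contribute nothing; moreover the mixed term $\widetilde{Q}(1_A f,1_{X\setminus A}f)$ vanishes because every relevant edge weight is zero. Hence $A$ is $Q^{(N)}$-invariant.

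For the irreducibility statement, apply the just-proved invariance characterization to the form $(Q^{(N)})_A=Q^{(N)}_A$, which is associated to the graph $(b_A,c_A)$ over $A$. The $Q^{(N)}_A$-invariant subsets of $A$ are exactly the unions of graph connected components of $(b_A,c_A)$. So $(Q^{(N)})_A$ is irreducible iff $\emptyset$ and $A$ are the only such unions, iff $(b_A,c_A)$ is connected, iff $A$ is connected in the sense defined in this section.

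The main obstacle (and essentially the only difference from the regular case) is purely bookkeeping: verifying that $1_A f$ still belongs to $\widetilde{D}\cap\ell^2(X,m)$ for every $f\in D(Q^{(N)})$ when $A$ is a union of graph connected components, and that the witness function $1_x+1_y$ can be used despite $C_c(X)$ not being a core of $Q^{(N)}$. Both points are handled by the observation that no edge crosses between $A$ and $X\setminus A$, which lets us decompose $\widetilde{Q}(f)$ additively over the two pieces and thereby pass freely between $Q^{(N)}$ and the induced forms on $A$ and $X\setminus A$.
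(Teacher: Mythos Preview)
Your proposal is correct and follows exactly the route the paper indicates: the paper's own proof is the single sentence ``This follows analogously to Lemma~\ref{lemma:graph:char_invariance} and Lemma~\ref{lemma:graph:char_irreducible},'' and you have written out precisely that analogy, including the two bookkeeping checks (that $1_x+1_y\in C_c(X)\subseteq D(Q^{(N)})$ and that $1_Af\in\widetilde{D}\cap\ell^2(X,m)$ when no edge crosses $\partial A$) which are the only places where the argument for $Q^{(N)}$ differs from the regular case.
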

\begin{proof}
 This follows analogously to Lemma~\ref{lemma:graph:char_invariance} and Lemma~\ref{lemma:graph:char_irreducible}.
\end{proof}
A graph is called \emph{canonically compactifiable}, if the inclusion $\widetilde{D}\subseteq \ell^\infty(X)$ holds. It is known, \cite[Corollary 5.2]{GHKLW15}, that the form $Q^{(N)}$ on a canonically compactifiable graph with finite measure has compact resolvent. 
\begin{proposition}
Let $(b,c)$ be a canonically compactifiable graph over $ (X,m) $ and let $f$ be an eigenfunction of $Q^{(N)}$. Then, the $Q^{(N)}$-nodal domains of $f$ are exactly the graph connected components of $F^+$ and $F^-$.
\end{proposition}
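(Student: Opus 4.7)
The plan is to imitate the proof of Theorem~\ref{theorem:graph:Dirichlet_vs_graph} almost verbatim, with Lemma~\ref{lemma:graph:char_Neumann_invariance} playing the role of Lemmas~\ref{lemma:graph:char_invariance} and \ref{lemma:graph:char_irreducible}. The underlying arithmetic is the same: invariance of a subset is detected only by edges crossing its boundary, and the restricted form continues to be of the same ``Neumann'' form type on the induced subgraph.

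First I would check that the setting of Corollary~\ref{corollary:courant_Q_nodal} is actually in force, so that the notion of $Q^{(N)}$-nodal domains makes sense and the results of Section~\ref{section:nodal_domains} apply. Canonical compactifiability together with the standing finite-measure assumption gives that $Q^{(N)}$ has compact resolvent by \cite[Corollary~5.2]{GHKLW15}. Because $Q^{(N)}$ is a Dirichlet form, $f^\pm\in D(Q^{(N)})$ whenever $f\in D(Q^{(N)})$, and since by construction $f^\pm$ vanishes outside $F^\pm$, we have $f^\pm\in D(Q^{(N)}_{F^\pm})$. Taking the trivial nest $\mathcal{A}=(X,X,\ldots)$ we then have $D((Q^{(N)})^{\mathcal{A}}_{F^\pm})=D(Q^{(N)}_{F^\pm})$, so the $(Q^{(N)},\mathcal{A})$-nodal domains are precisely the $Q^{(N)}$-nodal domains.

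Next I would identify the relevant restricted form. The discussion preceding the proposition gives $(Q^{(N)})_{F^+}=Q^{(N)}_{F^+}$, i.e.\ the restriction of $Q^{(N)}$ to functions supported on $F^+$ is itself the ``Neumann'' form associated to the induced subgraph $(b_{F^+},c_{F^+})$. Applying Lemma~\ref{lemma:graph:char_Neumann_invariance} to this form on the graph $(b_{F^+},c_{F^+})$, a subset $A\subseteq F^+$ is $Q^{(N)}_{F^+}$-invariant if and only if it is a union of graph connected components of $F^+$, and the further restriction $(Q^{(N)}_{F^+})_A$ is irreducible if and only if $A$ is connected. Combining these two characterizations, the positive $Q^{(N)}$-nodal domains (that is, the $Q^{(N)}_{F^+}$-connected components) are exactly the graph connected components of $F^+$. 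The argument for $F^-$ is identical, interchanging $F^+$ with $F^-$.

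There is no real obstacle here; the proposition is essentially a corollary of Lemma~\ref{lemma:graph:char_Neumann_invariance} plus the chain of identifications $(Q^{(N)})^{\mathcal{A}}_{F^\pm}=(Q^{(N)})_{F^\pm}=Q^{(N)}_{F^\pm}$. The only point worth being careful about is that the Neumann form $Q^{(N)}$ is \emph{not} regular in general, so one cannot invoke Theorem~\ref{theorem:graph:Dirichlet_vs_graph} directly; one must route through Corollary~\ref{corollary:courant_Q_nodal} (which uses the trivial nest and hence avoids the regularity hypothesis) and use the Neumann versions of the invariance and irreducibility characterizations.
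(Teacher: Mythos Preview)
Your proposal is correct and follows essentially the same approach as the paper, which simply states that the proof ``follows analogously to Theorem~\ref{theorem:graph:Dirichlet_vs_graph}.'' You have spelled out precisely what ``analogously'' means here: replace Lemmas~\ref{lemma:graph:char_invariance} and~\ref{lemma:graph:char_irreducible} by their Neumann counterpart Lemma~\ref{lemma:graph:char_Neumann_invariance}, use the identification $(Q^{(N)})_{F^\pm}=Q^{(N)}_{F^\pm}$, and work with the trivial nest so that Corollary~\ref{corollary:courant_Q_nodal} applies without any regularity assumption.
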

\begin{proof}
This follows analogously to Theorem~\ref{theorem:graph:Dirichlet_vs_graph}.
\end{proof}

\begin{corollary}[Courant bound for graphs]
 Let $(b,c)$ be a canonically compactifiable graph  over $ (X,m) $.
 Then every eigenfunction  for the eigenvalue $\lambda_n$ with multiplicity $ k $  has at most $(n+k-1)$ graph nodal domains. 
\end{corollary}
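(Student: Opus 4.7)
The plan is essentially to chain the preceding proposition with the abstract Courant bound from Corollary~\ref{corollary:courant_Q_nodal}, so the proof should be very short. I would proceed in three conceptual steps.

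First, I would check that $Q^{(N)}$ fits the hypotheses of Corollary~\ref{corollary:courant_Q_nodal}. The form $Q^{(N)}$ is a Dirichlet form on $\ell^2(X,m)$, hence in particular positivity preserving in the wide sense. Compact resolvent is exactly the content of \cite[Corollary~5.2]{GHKLW15} cited just before the preceding proposition (this is where canonical compactifiability together with finiteness of $m$ enters). So all the abstract hypotheses are in place and Corollary~\ref{corollary:courant_Q_nodal} applies: any eigenfunction $f$ for $\lambda_n$ of multiplicity $k$ has at most $n+k-1$ many $Q^{(N)}$-nodal domains.

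Second, I would translate $Q^{(N)}$-nodal domains into graph nodal domains. This is exactly the previous proposition: the $Q^{(N)}$-nodal domains of an eigenfunction $f$ are exactly the graph connected components of $F^+=\{f>0\}$ and $F^-=\{f<0\}$, i.e.\ the graph nodal domains of $f$. Combining this identification with the bound from the first step yields that the number of graph nodal domains is at most $n+k-1$.

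There is essentially no obstacle here; the serious work has already been done in the preceding proposition (which reduces to Lemma~\ref{lemma:graph:char_Neumann_invariance} in the same way that Theorem~\ref{theorem:graph:Dirichlet_vs_graph} reduces to Lemma~\ref{lemma:graph:char_invariance} and Lemma~\ref{lemma:graph:char_irreducible}) and in the abstract Courant bound of Corollary~\ref{corollary:courant_Q_nodal}. If anything has to be watched, it is that Corollary~\ref{corollary:courant_Q_nodal} is applicable even though $Q^{(N)}$ is in general not regular: the corollary only requires a positivity preserving form with compact resolvent, which is the whole point of the abstract framework developed in Section~\ref{section:nodal_domains}. Thus the proof reduces to the single sentence: apply Corollary~\ref{corollary:courant_Q_nodal} to $Q^{(N)}$ and invoke the preceding proposition.
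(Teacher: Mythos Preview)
Your proposal is correct and matches the paper's own proof, which simply reads ``This follows from the previous proposition and Corollary~\ref{corollary:courant_Q_nodal}.'' Your additional remarks on why $Q^{(N)}$ satisfies the hypotheses of Corollary~\ref{corollary:courant_Q_nodal} (positivity preserving as a Dirichlet form, compact resolvent via canonical compactifiability, and regularity not being required) are accurate and make the argument more self-contained than the paper's version.
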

\begin{proof}
 This follows  from the previous proposition and Corollary~\ref{corollary:courant_Q_nodal}.
\end{proof}

\subsection{Local Dirichlet forms}\label{section: local}
Here we apply our results to so-called local Dirichlet forms. The classical examples are the Dirichlet forms $Q(u)=\int_\Omega |a\nabla u|^2 dx$ with measurable coefficients $ a $ on an open subset 
$\Omega\subseteq \IR^n$.

In the first subsection we will show that in a rather general setting a topological nodal domain can be decomposed into Dirichlet nodal domains, 
where for a continuous eigenfunction $f$ a \emph{topological nodal domain} is a connected component (in the topological sense) of $\{f>0\}$ or $\{f<0\}$.  
In the second and third subsection we will look at two classes of examples, for which we show  that the classical notion of nodal domains coincides with the notion of 
Dirichlet nodal domains.
\begin{definition}
A Dirichlet form $Q$ is called \emph{local} if for every $u,v\in D(Q)$ with $u\cdot v=0$ we have $ Q(u,v)=0 $.  
\end{definition}
\begin{remarks}\label{remark:reg:restr_local_is_local}
(a) Obviously a set $A$ is invariant for a local, regular Dirichlet form if and only if we have $1_Af\in D(Q)$ for every $f\in D(Q)$. \\
(b) Using Lemma~\ref{lemma:reg:restr_open_reg} one deduces easily that the part of a local, regular Dirichlet form on an open set is again a local, 
  regular Dirichlet form. 
\end{remarks}

\subsubsection{The general case}\label{section:general_local}
We show next that under certain conditions a topological nodal domain can be decomposed into Dirichlet nodal domains. 
We start with the following 
lemmas. 

\begin{lemma}\label{lemma:local:connected_comp_inv}
 Let $Q$ be a local, regular Dirichlet form on a locally connected space $ X $. Then every topological connected component of $X$ is $Q$-invariant. 
\end{lemma}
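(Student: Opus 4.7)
The plan is to invoke Remark~\ref{remark:reg:restr_local_is_local}(a), which reduces $Q$-invariance of a measurable set $A$ to the statement that $1_A f \in D(Q)$ for every $f \in D(Q)$. Since $X$ is locally connected, every topological connected component $C$ is both open and closed in $X$, so $1_C$ is already a continuous function. Hence the task reduces to showing $1_C f \in D(Q)$ for all $f \in D(Q)$.

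First I would handle the case $f \in C_c(X) \cap D(Q)$. Set $K = \supp(f)$; then $K \cap C$ is a compact subset of the open set $C$. Regularity of $Q$ together with the standard cut-off procedure for regular Dirichlet forms (see e.g.\ \cite{Fuku}) yields $\phi \in C_c(X) \cap D(Q)$ with $0 \le \phi \le 1$, $\phi = 1$ on $K \cap C$, and $\supp(\phi) \subseteq C$. Distinguishing the cases $x \in K \cap C$, $x \in C \setminus K$, and $x \in X \setminus C$ shows $\phi f = 1_C f$ pointwise, and since $D(Q) \cap L^\infty(X,m)$ is an algebra for regular Dirichlet forms, this already gives $1_C f = \phi f \in D(Q)$.

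Next I would pass from $C_c(X) \cap D(Q)$ to all of $D(Q)$ using locality. For $g \in C_c(X) \cap D(Q)$ the previous step shows that both $1_C g$ and $1_{X\setminus C} g$ belong to $D(Q)$; their product vanishes, so locality yields $Q(1_C g, 1_{X\setminus C} g) = 0$ and hence $Q(1_C g) \le Q(g)$. Given an arbitrary $f \in D(Q)$, regularity provides $f_n \in C_c(X) \cap D(Q)$ with $f_n \to f$ in $\|\cdot\|_Q$; applying the estimate to $g = f_n - f_m$ shows that $(1_C f_n)$ is $\|\cdot\|_Q$-Cauchy and converges to $1_C f$ in $\ltwo$, whence $1_C f \in D(Q)$ by closedness of $Q$.

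The main obstacle is the first step: realizing that the indicator of the clopen set $C$ times an element of $C_c(X) \cap D(Q)$ still lies in $D(Q)$. Mere continuity of $1_C$ on $X$ is insufficient, and the argument is forced to combine the regular-Dirichlet-form cut-off construction (which rewrites $1_C f$ as $\phi f$ with $\phi \in D(Q) \cap C_c(X)$) with the algebra property of $D(Q) \cap L^\infty(X,m)$. Once this hurdle is cleared, locality makes the extension to arbitrary $f \in D(Q)$ routine.
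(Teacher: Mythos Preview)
Your argument is correct. The paper, however, takes a much shorter route: it observes that local connectedness makes every topological connected component simultaneously open and closed, and then simply cites \cite[Corollary~4.6.3]{Fuku}, which already states that a clopen set is invariant for a local regular Dirichlet form.

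What you have written is essentially a self-contained proof of (the relevant special case of) that cited corollary. Your approach has the advantage of being elementary and not relying on an external black box: the cut-off construction plus the algebra property of $D(Q)\cap L^\infty(X,m)$ handle the core $C_c(X)\cap D(Q)$, and the locality-based estimate $Q(1_C g)\le Q(g)$ then lets you pass to the closure. The paper's approach, by contrast, keeps the exposition short by deferring the work to \cite{Fuku}. Both are valid; yours is more informative for a reader who does not have \cite{Fuku} at hand, while the paper's is more economical.
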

\begin{proof}
 Since the space is locally connected, we infer that topological connected component is simultaneously open and closed. Hence, we can apply \cite[Corollary 4.6.3]{Fuku} and the result follows.
\end{proof}
\begin{lemma}\label{lemma:local:equality_of_restrictions}
Let $Q$ be a local, regular Dirichlet form on a locally connected topological space $X$. Then, for every topological connected component $ B $ of an open set $A\subseteq X$, we have $$(Q^A)_B=Q^B.$$ 
\end{lemma}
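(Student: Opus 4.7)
The plan is to establish the two inclusions $D(Q^B)\subseteq D((Q^A)_B)$ and $D((Q^A)_B)\subseteq D(Q^B)$; since both $(Q^A)_B$ and $Q^B$ act as the restriction of the original form $Q$ to their respective domains, equality of domains gives equality of forms.

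First I would sort out the topology. Because $X$ is locally connected and $A$ is open, the subspace $A$ is locally connected, so the topological component $B$ of $A$ is both open (local connectedness) and closed (components are always closed in the ambient space). By Lemma~\ref{lemma:reg:restr_open_reg} the form $Q^A$ is a regular Dirichlet form on $L^2(A,m)$, and by Remark~\ref{remark:reg:restr_local_is_local}(b) it is local. Applying Lemma~\ref{lemma:local:connected_comp_inv} to $Q^A$ on the locally connected space $A$ yields $Q^A$-invariance of $B$. Combined with Remark~\ref{remark:reg:restr_local_is_local}(a) and Lemma~\ref{CharInv}, this gives $1_B f\in D(Q^A)$ and $\|1_B f\|_Q\leq\|f\|_Q$ for every $f\in D(Q^A)$; that is, multiplication by $1_B$ is a contraction on $(D(Q^A),\|\cdot\|_Q)$.

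The inclusion $D(Q^B)\subseteq D((Q^A)_B)$ is straightforward: every $g\in C_c(B)\cap D(Q)$ already lies in $C_c(A)\cap D(Q)\subseteq D(Q^A)$ and vanishes off $B$, so it belongs to $D((Q^A)_B)$; since $D((Q^A)_B)$ is $\|\cdot\|_Q$-closed in $D(Q)$, taking the closure preserves the inclusion. For the converse, I would take $f\in D((Q^A)_B)\subseteq D(Q^A)$ and approximate it by $f_n\in C_c(A)\cap D(Q)$ in $\|\cdot\|_Q$. The contraction property above then gives $1_B f_n\to 1_B f=f$ in $\|\cdot\|_Q$. It remains to verify $1_B f_n\in C_c(B)\cap D(Q)$. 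This is where clopenness of $B$ in $A$ is crucial: $1_B f_n$ coincides with $f_n$ on the open set $B$ and with $0$ on the open set $A\setminus B$, hence is continuous on $A$; its support is contained in $\supp(f_n)\cap B$, a closed subset of the compact set $\supp(f_n)\subseteq A$, hence compact and contained in $B$. Together with $1_B f_n\in D(Q^A)\subseteq D(Q)$ (by invariance), this gives the desired approximating sequence and thus $f\in D(Q^B)$.

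The main obstacle is the reverse inclusion $D((Q^A)_B)\subseteq D(Q^B)$: an arbitrary element of $D((Q^A)_B)$ is only known to be a $\|\cdot\|_Q$-limit of functions in $C_c(A)\cap D(Q)$, which are \emph{not} supported in $B$, and one has to promote this to an approximation from within $C_c(B)\cap D(Q)$. Local connectedness is used twice at exactly this step: once to invoke Lemma~\ref{lemma:local:connected_comp_inv} and obtain the form-norm control of $1_B f_n$, and once to ensure that the truncated functions $1_B f_n$ are themselves continuous with compact support in $B$. Without local connectedness of $X$ neither conclusion survives.
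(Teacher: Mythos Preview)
Your proof is correct and follows essentially the same route as the paper's: both establish the two domain inclusions, both use local connectedness to get that $B$ is clopen in $A$, both invoke Lemma~\ref{lemma:local:connected_comp_inv} for the $Q^A$-invariance of $B$, and both push an approximating sequence $f_n\in C_c(A)\cap D(Q)$ through multiplication by $1_B$. The only cosmetic difference is in the norm estimate for the reverse inclusion: you phrase it as ``multiplication by $1_B$ is a $\|\cdot\|_Q$-contraction on $D(Q^A)$'' (via Lemma~\ref{CharInv}), whereas the paper writes the orthogonal splitting $\|u-u_n\|_Q^2=\|u-u_n1_B\|_Q^2+\|u_n1_{A\setminus B}\|_Q^2$ directly from locality; these are the same observation.
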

\begin{proof}
By local connectedness,  $B$ is an open subset of $A$. By Lemma~\ref{lemma:local:connected_comp_inv} the set $B$ is $Q^A$-invariant.
We have to show $D((Q^A)_B)=D(Q^B)$.  

"$ \supseteq $": Using the openness of $B$, we get $C_c(B)\subseteq C_c(A)$ and, hence, $D(Q^B)\subseteq D(Q^A)$. Let $u\in D(Q^B)$. We show $u1_{A\setminus B}=0$ to infer $u\in D((Q^A)_B)$.
Let $(u_n)$ be a sequence in $C_c(B)$ such that $\|u_n-u\|_Q\to 0$. In particular, $ \|u_n-u\|_{L^2(X,m)}\to 0$ and, since every $u_n$ vanishes outside of $B$, we obtain
$u1_{A\setminus B}=0$.

"$ \subseteq $":  Let $u\in D((Q^A)_B)$ be arbitrary and $(u_n)$ be a sequence in $C_c(A)$ such that $\|u_n-u\|_Q\to 0$. Hence,  $u_n1_B\in C_c(B)$ and by $Q^A$-invariance of $B$ we infer $u_n1_B\in D(Q_A)$ for every 
$n$. Using locality of $Q$, we  estimate 
\begin{align*}
\|u-u_n1_B\|_Q^2\leq \|u-u_n1_B\|_Q^2+\|u_n1_{A\setminus B}\|_Q^2=\|u-u_n\|_Q^2\to 0,
\end{align*}
as $ n\to\infty. $
This yields $u\in D(Q^B)$.
\end{proof}
Using Lemma~\ref{lemma:local:connected_comp_inv}, we get the desired result.

\begin{theorem}[Dirichlet nodal domains vs topological nodal domains]\label{proposition:local:connected_comp_decomp_in_nodal_domains}
 Let $Q$ be a local, regular Dirichlet form on a locally connected space $ X $ with compact resolvent. Let $f$ be a continuous eigenfunction of $Q$ and let $A$ be 
 a topological connected component of $F^+$ or $F^-$ (i.e., a classical nodal domain). Then, there are Dirichlet nodal domains $C_1,\ldots,C_l$ 
 such that $A=\bigcup_{i=1}^l C_i$, where the equality holds up to a set of measure zero.
\end{theorem}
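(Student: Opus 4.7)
My plan is to leverage the decomposition of $F^{\pm}$ into finitely many Dirichlet nodal domains already established in Section~\ref{section:regular}, combined with the $Q^{F^+}$-invariance of the topological component $A$, to conclude that $A$ is the union of some of these nodal domains up to a null set. Without loss of generality assume $A$ is a topological connected component of $F^+$.

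First I would set up the restricted form. By continuity of $f$ the set $F^+$ is open in $X$; as an open subset of a locally connected space, $F^+$ is itself locally connected, and hence each of its topological components, including $A$, is open. The part $Q^{F^+}$ is a regular Dirichlet form on $L^2(F^+, m)$ by Lemma~\ref{lemma:reg:restr_open_reg}, and it remains local by Remark~\ref{remark:reg:restr_local_is_local}(b). I would then apply Lemma~\ref{lemma:local:connected_comp_inv} to $Q^{F^+}$ on the locally connected space $F^+$ to conclude that $A$, as a topological component of $F^+$, is $Q^{F^+}$-invariant.

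The next step is to combine this invariance with the nodal domain decomposition from the corollary preceding the definition of Dirichlet nodal domains, which writes $F^+ = \bigcup_{i=1}^m D_i$ up to a null set, where each $D_i$ is a positive Dirichlet nodal domain, i.e., a $Q^{F^+}$-connected component. Since $A$ is $Q^{F^+}$-invariant, Lemma~\ref{InvTrans} gives that $A\cap D_i$ is $(Q^{F^+})_{D_i}$-invariant for every $i$. Irreducibility of $(Q^{F^+})_{D_i}$ then forces either $m(A\cap D_i)=0$ or $m(D_i\setminus A)=0$. Setting $I:=\{i : m(D_i\setminus A)=0\}$, this yields $A=\bigcup_{i\in I} D_i$ up to a set of measure zero, which is the desired decomposition into Dirichlet nodal domains.

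The main obstacle I expect is making sure Lemma~\ref{lemma:local:connected_comp_inv} is genuinely applicable to $Q^{F^+}$ on $F^+$ rather than to $Q$ on $X$; this reduces to checking local connectedness of the open subspace $F^+$ and that $Q^{F^+}$ is again a local, regular Dirichlet form, both of which are standard and follow from the tools listed above. Once $A$ is known to be $Q^{F^+}$-invariant, the remainder of the argument is a clean, essentially algebraic application of the invariance and irreducibility formalism already developed in Section~\ref{subsection:connected_components}.
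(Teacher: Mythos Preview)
Your proposal is correct and follows essentially the same route as the paper's proof: both establish $Q^{F^+}$-invariance of the topological component $A$ via Lemma~\ref{lemma:local:connected_comp_inv} applied to the local, regular Dirichlet form $Q^{F^+}$ on the locally connected open set $F^+$, and then use Lemma~\ref{InvTrans}(b) together with irreducibility of each $(Q^{F^+})_{D_i}$ to force the dichotomy $m(A\cap D_i)=0$ or $m(D_i\setminus A)=0$. The paper phrases the final step as a contradiction argument (assuming some $C_i$ meets both $A$ and $F^+\setminus A$ in positive measure), whereas you state the dichotomy directly and collect the indices with $m(D_i\setminus A)=0$; these are the same argument in slightly different dress.
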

\begin{proof}
Let $A$ be a topological connected component of $F^+$. Since $f$ is continuous, the set $F^+$ is open. 
 Hence, $Q^{F^+}$ is again a local, regular Dirichlet form by Remark~\ref{remark:reg:restr_local_is_local}.
 Since open subsets of locally connected spaces are again locally connected, we can apply Lemma \ref{lemma:local:connected_comp_inv} and infer that $A$ is  
 $Q^{F^+}$-invariant.
 Now let $C_1,\ldots,C_l$ be the Dirichlet nodal domains that satisfy $m(C_i\cap A)>0$ all $ i $. Such Dirichlet nodal domains exist, since $F^+$ can be decomposed 
 into the Dirichlet nodal domains up to an
 $m$-null set and $A$ is an open set in $X$ (since $A$ is open in $F^+$ and $F^+$ is open in $X$) and, therefore, $ A $ has positive measure (since $m$ has full support).  
 It is left to show $C_i\subseteq A$ (up to an $m$-null set). Then, the result follows since $F^+$ can be decomposed into the Dirichlet nodal domains.
 Suppose the contrary, i.e., for an $i\in\{1,\ldots,n\}$ we have $m(C_i\cap(F^+\setminus A))>0$. Then, since the invariant sets form a $\sigma$-algebra, Lemma~\ref{lemma:Inv:Inv_is_sigma_field}, and since $A$ is $Q^{F^+}$-invariant by Lemma~\ref{lemma:local:connected_comp_inv} we infer that both $C_i\cap A$ and 
 $C_i\cap (F^+\setminus A)$ are $Q^{F^+}$-invariant.  But by Lemma \ref{InvTrans} this implies that $ C_i\cap A$ and 
 $C_i\cap (F^+\setminus A)$ are $(Q^{F^+})_{C_i}$-invariant. This is a contradiction to the irreducibility of $(Q^{F_+})_{C_i}$. Hence, the proposition is proven.
\end{proof}
\begin{corollary}[Courant bound for local, regular Dirichlet forms]
 Let $Q$ be a local, regular Dirichlet form on a locally connected space $ X $ with compact resolvent.   
 Then every a continuous eigenfunction for the eigenvalue $\lambda_n$ with multiplicity $ k $ has at most $(n+k-1)$ (topological) nodal domains.
\end{corollary}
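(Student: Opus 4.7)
The plan is to derive the result as a direct consequence of the two preceding statements: Theorem~\ref{proposition:local:connected_comp_decomp_in_nodal_domains}, which decomposes each topological nodal domain into finitely many Dirichlet nodal domains up to sets of measure zero, and Corollary~\ref{corollary:reg:NodalDom}, which bounds the total number of Dirichlet nodal domains of a continuous eigenfunction for $\lambda_n$ of multiplicity $k$ by $n+k-1$. The strategy reduces to a counting argument: I need to exhibit an injection from the set of topological nodal domains into the set of Dirichlet nodal domains.

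Concretely, I would enumerate the topological nodal domains of the continuous eigenfunction $f$ as $A_{1},\ldots,A_{N}$. Since $f$ is continuous, the sets $F^{\pm}$ are open, and by local connectedness of $X$ each $A_{j}$ is an open connected component of $F^{+}$ or $F^{-}$; the full-support assumption on $m$ then gives $m(A_{j})>0$. Applying Theorem~\ref{proposition:local:connected_comp_decomp_in_nodal_domains} to each $A_{j}$ produces at least one Dirichlet nodal domain $C^{(j)}$ with $C^{(j)}\subseteq A_{j}$ up to an $m$-null set. Since the $A_{j}$'s are pairwise disjoint and have positive measure, the chosen $C^{(j)}$'s are pairwise distinct as equivalence classes of measurable sets modulo null sets. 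Hence $N$ is bounded above by the total number of Dirichlet nodal domains of $f$, which by Corollary~\ref{corollary:reg:NodalDom} is at most $n+k-1$.

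There is no genuine obstacle in this proof, as the hard work has been carried out in Theorem~\ref{proposition:local:connected_comp_decomp_in_nodal_domains} and its supporting lemmas. The only subtlety worth spelling out explicitly is to verify that a single Dirichlet nodal domain cannot be chosen for two different topological nodal domains; this amounts to observing that the inclusion $C^{(j)}\subseteq A_{j}$ modulo an $m$-null set uniquely determines $j$ whenever $m(C^{(j)})>0$, which in turn follows from the disjointness of the $A_{j}$'s and the full-support property of $m$.
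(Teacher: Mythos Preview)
Your proposal is correct and follows essentially the same approach as the paper: the paper's proof also observes that by Theorem~\ref{proposition:local:connected_comp_decomp_in_nodal_domains} every topological nodal domain contains at least one Dirichlet nodal domain, so the number of topological nodal domains is bounded by the number of Dirichlet nodal domains, which is at most $n+k-1$ by Corollary~\ref{corollary:reg:NodalDom}. Your version simply spells out the injectivity of the assignment more carefully than the paper does.
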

\begin{proof}
 By the proposition above every topological nodal domain contains at least one Dirichlet nodal domain. Hence, the number of topological nodal domains is smaller than the one of 
 Dirichlet nodal domains, which is bounded from above by $(n+k-1)$ by Corollary~\ref{corollary:reg:NodalDom}.  
\end{proof}
For general Dirichlet forms, the upper bound of $(n+k-1)$ Dirichlet nodal domains can not be improved. An example (on finite graphs) for this can be found in \cite{Davies}. 
However, the classical nodal domain theorem for the Laplacian gives 
an upper bound of $n$ instead of  $(n+k-1)$. This is because of an unique continuation principle, which holds for the Laplacian in $\IR^n$ but not on graphs. 
We show this sharper 
upper bound under the assumption of a unique continuation principle. We say the form $ Q $ satisfies a\emph{ unique continuation principle} if one of the following two assumptions is satisfied:
\begin{itemize}
\item[(UC1)] Eigenfunctions of $Q$ do not vanish on a set of positive measure. 
\item[(UC2)]  Eigenfunctions of $Q$ do not vanish on non-empty open sets and every Dirichlet nodal domain includes a non-empty open set.
\end{itemize}

\begin{theorem}
	Let $Q$ be an irreducible local, regular Dirichlet form with compact resolvent that satisfies $ \mathrm{(UC1)} $ or $ \mathrm{(UC1)} $. Then every continuous eigenfunction for $\lambda_n$  has at most $n$ Dirichlet nodal domains. 
\end{theorem}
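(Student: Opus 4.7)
\emph{Plan.} Fix an $L^2$-orthonormal basis of eigenfunctions $\phi_1,\phi_2,\dots$ with corresponding eigenvalues $\lambda_1\le\lambda_2\le\dots$, let $f$ be a continuous eigenfunction for $\lambda_n$, and let $C_1,\dots,C_l$ be its Dirichlet nodal domains. I will prove $l\le n$ by contradiction, so assume $l\ge n+1$. By Lemma~\ref{lemma:nodal_dom_Q_positive} the functions $f1_{C_1},\dots,f1_{C_l}$ lie in $D(Q)$ and, having pairwise disjoint supports of positive measure, are linearly independent; they span an $l$-dimensional subspace $W\subseteq D(Q)$.

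Unlike the proof of Theorem~\ref{NodalDom}, where one imposes $l-1$ orthogonality conditions on the test function, I impose only the $n-1$ conditions $v\perp\phi_1,\dots,\phi_{n-1}$ together with the single coefficient condition $\alpha_i=0$ for some fixed $i\in\{1,\dots,l\}$. This amounts to at most $n$ linear constraints on the $l$-dimensional space $W$, so since $l\ge n+1$ there exists a nonzero $v=\sum_{j\ne i}\alpha_jf1_{C_j}\in W$ satisfying all of them. The variational characterisation of $\lambda_n$ (Lemma~\ref{lemma:closed:varia_eigen}) then yields $Q(v)\ge\lambda_n\|v\|_\ltwo^2$. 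Conversely, the identity in Lemma~\ref{SumLemma} with $\mu=\lambda_n$, combined with the eigenvalue relation $Q(f1_{C_j},f)=\lambda_n\|f1_{C_j}\|_\ltwo^2$ and the non-negativity $Q(f1_{C_j},f1_{C_k})\ge 0$ from Lemma~\ref{lemma:nodal_dom_Q_positive}, reduces to
\begin{equation*}
Q(v)-\lambda_n\|v\|_\ltwo^2 = -\tfrac12\sum_{j,k=1}^l(\alpha_j-\alpha_k)^2\,Q(f1_{C_j},f1_{C_k})\le 0.
\end{equation*}
Hence $Q(v)=\lambda_n\|v\|_\ltwo^2$, and expanding $v$ in the eigenbasis forces $v$ to lie in the eigenspace of $\lambda_n$; in particular $v$ is itself an eigenfunction for $\lambda_n$.

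By construction $v$ vanishes identically on $C_i$, since $\alpha_i=0$ and the remaining summands have supports disjoint from $C_i$. Under (UC1) this contradicts the fact that a nontrivial eigenfunction cannot vanish on the positive-measure set $C_i$. Under (UC2) it contradicts the requirement that $C_i$ contains a non-empty open set on which no eigenfunction may vanish. Either way we obtain a contradiction, and therefore $l\le n$. The delicate point is the dimension count paired with the equality clause of Rayleigh--Ritz: dropping one of the orthogonality constraints used in Theorem~\ref{NodalDom} buys just enough extra freedom to force a single coefficient $\alpha_i$ to vanish, and this is precisely the scenario that the unique continuation hypothesis is tailored to exclude.
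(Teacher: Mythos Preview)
Your proof is correct and follows essentially the same contradiction strategy as the paper: build a test function $v$ from the nodal-domain pieces that is orthogonal to the first $n-1$ eigenfunctions, show $Q(v)=\lambda_n\|v\|_\ltwo^2$ so that $v$ is itself an eigenfunction for $\lambda_n$, and then observe that $v$ vanishes on a leftover nodal domain, contradicting unique continuation. The only cosmetic differences are that the paper takes $v$ in the span of the first $n$ pieces $f1_{C_1},\dots,f1_{C_n}$ (so $v$ automatically vanishes on $C_{n+1}$) and computes $Q(v)=\lambda_n$ directly via locality ($Q(f1_{C_i},f1_{C_j})=0$ for $i\neq j$), whereas you impose $\alpha_i=0$ as an extra linear constraint and reach the same equality through the sandwich of Lemma~\ref{SumLemma} with the variational lower bound.
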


\begin{proof}
 Assume that $f$ has $l>n$ Dirichlet nodal domains $C_1\ldots,C_l$. Then, $f1_{C_k}\not\equiv 0$ and $\langle f1_{C_i},f1_{C_j}\rangle_\ltwo=0$ for all $i,j$, $i\not=j$.
 Therefore, they are orthogonal elements of $\ltwo$. In particular, the span of $(f1_{C_k})_{k=1,\ldots,n}$ has dimension $n$. Now choose $c_1,\ldots,c_n$ 
 such that $v=\sum_{i=1}^n c_if1_{C_i}$
 is normalised and orthogonal to the $n-1$ eigenfunctions for $\lambda_1,\ldots,\lambda_{n-1}$. The variational characterization, Lemma~\ref{lemma:closed:varia_eigen} yields \[Q(v)\geq \lambda_n.\]
 On the other hand, since the Dirichlet form is local, we infer 
 $$Q(f1_{C_i})=Q(f,f1_{C_i})=\lambda_n\langle f,f1_{C_i}\rangle_{L^2(X,m)}=\lambda_n\|f1_{C_i}\|_{L^2(X,m)}^2.$$ 
 Hence, we can compute, using that $Q(f1_{C_i},f1_{C_j})=0,i\not=j$ by locality, 
 \begin{align*}
 Q(v)&=\sum_{i=1}^n c_i^2 Q(f1_{C_i})+2\sum_{i,j=1,i\not=j}^n c_i c_j Q(f1_{C_i},f1_{C_j})\\&=\sum_{i=1}^n c_i^2 Q(f1_{C_i})
 =\lambda_n \sum_{i=1}^n c_i^2 \|f1_{C_i}\|_{L^2(X,m)}^2\\&=\lambda_n\|v\|_{L^2(X,m)}^2=\lambda_n.       \end{align*}
Thus, we can apply Lemma \ref{lemma:closed:varia_eigen} und infer that $v$ is an eigenfunction for $\lambda_n$. But $v$ vanishes on $C_{n+1}$, a Dirichlet nodal domain, and hence $ v $ which contradicts (UC1) or (UC2).
\end{proof}
\begin{corollary}[Strong Courant bound]\label{corollary:strong_bound}
 Let $Q$ be an irreducible local, regular Dirichlet form with compact resolvent that satisfies $ \mathrm{(UC1)} $ or $ \mathrm{(UC2)} $.
 Then, every eigenfunction for the eigenvalue $\lambda_n$ has at most $n$ topological nodal domains. 
\end{corollary}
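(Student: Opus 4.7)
The plan is to reduce the statement to the two preceding results, essentially repeating the argument used to derive the earlier Courant bound for local, regular Dirichlet forms, but now invoking the sharper $n$-bound from the immediately preceding theorem in place of Corollary~\ref{corollary:reg:NodalDom}. Concretely, starting from a continuous eigenfunction $f$ for $\lambda_n$, I would first apply Theorem~\ref{proposition:local:connected_comp_decomp_in_nodal_domains}: under the standing hypotheses (local, regular Dirichlet form with compact resolvent; for Theorem~\ref{proposition:local:connected_comp_decomp_in_nodal_domains} one also needs $X$ to be locally connected, which should be inherited from the classical setting or made into an additional standing assumption), every topological connected component of $F^+$ or $F^-$ can be written, up to an $m$-null set, as a disjoint union of finitely many Dirichlet nodal domains. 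In particular each topological nodal domain contains at least one Dirichlet nodal domain.

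Since distinct topological nodal domains are disjoint, the Dirichlet nodal domains sitting inside them are pairwise distinct; hence the map sending each topological nodal domain to any one of its contained Dirichlet nodal domains is injective. This yields the key inequality
\begin{equation*}
\#\{\text{topological nodal domains of }f\}\leq \#\{\text{Dirichlet nodal domains of }f\}.
\end{equation*}
Then I would invoke the preceding theorem, which under irreducibility and the unique continuation hypothesis (UC1) or (UC2) bounds the right-hand side by $n$. Chaining the two gives at most $n$ topological nodal domains, exactly as required.

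The real work has already been done: the unique continuation hypothesis is exploited in the preceding theorem to exclude the multiplicity correction $k-1$, and the decomposition from Theorem~\ref{proposition:local:connected_comp_decomp_in_nodal_domains} is what makes the comparison between topological and Dirichlet nodal domains run. Accordingly, there is no genuine obstacle in the corollary itself; the only thing to be careful about is the injectivity of the assignment topological-to-Dirichlet, which is immediate from disjointness and the positivity of measure of Dirichlet nodal domains. The proof should therefore be short — one sentence for the decomposition, one sentence invoking the previous theorem.
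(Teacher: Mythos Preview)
Your proposal is correct and matches the paper's own proof essentially verbatim: the paper simply says the corollary follows from the preceding theorem together with the fact (from Theorem~\ref{proposition:local:connected_comp_decomp_in_nodal_domains}) that every topological nodal domain contains a Dirichlet nodal domain. Your remarks about the implicit need for continuity of $f$ and local connectedness of $X$ are apt, but those same hypotheses are tacitly assumed in the paper's proof as well.
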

\begin{proof}
 This follows from the previous theorem and since every topological nodal domain contains a Dirichlet nodal domain by Proposition~\ref{proposition:local:connected_comp_decomp_in_nodal_domains}.
\end{proof}

\subsubsection{Strongly elliptic differential operators}\label{section:elliptic}
Let $\Omega\subseteq \IR^n$ be a bounded, connected, open set.
On $C_c^\infty(\Omega)$ consider the operator given by 
$$L_0f:=\sum_{i,j\leq n} \nabla\cdot(a_{ij}\nabla)f+Vf,$$ where the real valued matrix $a(x)=(a_{ij}(x))_{i,j=1}^n$ is symmetric and $V:\Omega\to[0,\infty)$. Then, the Friedrich's extension $L$ of $L_0$ is the generator of a local, regular Dirichlet form $Q$. We assume the following three conditions which are satisfied under very mild assumptions on $ a $, $ V $ and the boundary of $ \Omega $:
\begin{itemize}
	\item $ D(Q)=H_0^1(\Omega). $
	\item $ Q $ has compact resolvent.
	\item The eigenfunctions of $ L $ are continuous.
\end{itemize}
For example this is satisfied if $ \Omega $ has smooth boundary, the coefficients satisfy 
$$\mu_1\|\xi\|\leq\sum_{i,j\leq n}a_{ij}(x)\xi_1\xi_j\leq \mu_2\|\xi\|$$ for some $\mu_1,\mu_2>0$ and  every $x\in\Omega$ and $\xi\in\IR^n$ and $ V $ is (sufficiently) bounded, see e.g. \cite[Lemma~6.1.3]{DaviesSpectral} and  \cite[Corollary~8.36.]{GilbargTrudinger}.
\begin{theorem}
 Under the assumptions stated above the topological nodal domains and the Dirichlet nodal domains of an eigenfunction  coincide (up to sets of measure zero). 
\end{theorem}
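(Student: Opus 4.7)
My plan is to establish the two inclusions separately: every Dirichlet nodal domain of $f$ lies up to a null set inside a single topological nodal domain, and every topological nodal domain consists up to a null set of exactly one Dirichlet nodal domain.

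For the first inclusion I would exploit local connectedness of $\mathbb{R}^n$. Since $f$ is continuous, $F^+=\{f>0\}$ is open, and its topological components are open as well. By Lemma~\ref{lemma:local:connected_comp_inv} applied to the local regular Dirichlet form $Q^{F^+}$ (local and regular by Remark~\ref{remark:reg:restr_local_is_local}), each topological component $A$ of $F^+$ is $Q^{F^+}$-invariant. Consequently, for any Dirichlet nodal domain $C\subseteq F^+$, the intersection $C\cap A$ is $Q^{F^+}$-invariant (invariant sets form a $\sigma$-algebra) and hence $(Q^{F^+})_C=Q_C$-invariant, so irreducibility of $Q_C$ forces $C$ to lie inside a unique such $A$ up to a null set. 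The analogous argument handles $F^-$.

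For the second inclusion, Proposition~\ref{proposition:local:connected_comp_decomp_in_nodal_domains} already decomposes each topological nodal domain $A$ into finitely many Dirichlet nodal domains modulo null sets, so I only need the restriction $Q^A$ to be irreducible on $L^2(A,m)$. My strategy is: given a $Q^A$-invariant $B\subseteq A$ with $m(B)>0$, produce an eigenfunction of $Q^A$ vanishing on $A\setminus B$ and then invoke unique continuation. First I would check that $f1_A$ lies in $D(Q^A)=H_0^1(A)$, using continuity of $f$, the fact that $f\vert_{\partial A}=0$, and $f\in H^1(\Omega)$; testing against $\phi\in C_c^\infty(A)$ then gives $Q^A(f1_A,\phi)=Q(f,\phi)=\lambda\langle f1_A,\phi\rangle_{L^2(A,m)}$, so $f1_A$ is an eigenfunction of $Q^A$ for $\lambda$. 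A splitting analogous to the one in Lemma~\ref{lemma:nodal_dom_Q_positive}, exploiting $Q^A$-invariance of $B$, then identifies $f1_B$ as a further $Q^A$-eigenfunction for the same eigenvalue. Since $L$ remains strongly elliptic on the connected open set $A$, an Aronszajn-type unique continuation principle of the form (UC1) applies to $Q^A$, so $f1_B$ cannot vanish on $A\setminus B$ unless it is identically zero. As $f1_B\not\equiv 0$ by $m(B)>0$ and $f>0$ on $A$, this forces $m(A\setminus B)=0$, and hence $Q^A$ is irreducible.

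The main technical obstacle is the verification that $f1_A\in H_0^1(A)$ when $A$ has possibly irregular boundary, since a direct trace-theoretic argument is unavailable. I plan to handle this by truncating $f1_A$ at height $\varepsilon>0$: continuity of $f$ together with $f\vert_{\partial A}=0$ force these truncations to have compact support in $A$, and mollifying them produces a $\|\cdot\|_Q$-approximating sequence in $C_c^\infty(A)$. Once $f1_A$ is in place as an eigenfunction of $Q^A$, the rest is essentially algebraic and reduces to the appeal to unique continuation.
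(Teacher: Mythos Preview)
Your first inclusion is fine and essentially repeats the content of Theorem~\ref{proposition:local:connected_comp_decomp_in_nodal_domains}, which you also invoke in the second part; so the real work is the irreducibility of $Q^A$ for a topological nodal domain $A$.

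There is a genuine gap in your argument for this. You deduce irreducibility by showing $f1_B$ is an eigenfunction of $Q^A$ and then appealing to an ``Aronszajn-type unique continuation principle of the form (UC1).'' But unique continuation is \emph{not} among the standing hypotheses of this theorem: the only assumptions are $D(Q)=H_0^1(\Omega)$, compact resolvent, and continuity of eigenfunctions. These are compatible with merely bounded measurable coefficients $a_{ij}$, and for such coefficients Aronszajn's theorem is unavailable; there are well-known counterexamples (Pli\'s, Miller) showing that strong unique continuation can fail for uniformly elliptic operators with coefficients that are even H\"older continuous. So strong ellipticity alone does not give you (UC1), and your proof as written imports an assumption that the theorem does not make. (Indeed, the corollary immediately following the theorem treats (UC1)/(UC2) as an \emph{additional} hypothesis needed for the sharper bound $n$.)

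The paper sidesteps this completely by a different idea. For a \emph{local} regular Dirichlet form, $Q$-invariance of a set $B$ is characterized purely by the domain: $B$ is invariant iff $1_Bg\in D(Q)$ for all $g\in D(Q)$ (Remark~\ref{remark:reg:restr_local_is_local}(a)). Since $D((Q^{F^+})_C)=H_0^1(C)$ by Lemma~\ref{lemma:local:equality_of_restrictions}, and $H_0^1(C)$ is also the domain of the \emph{standard} Dirichlet form $\mathcal{E}^{(D)}$ on $C$, a $(Q^{F^+})_C$-invariant set $B$ is automatically $\mathcal{E}^{(D)}$-invariant. One then argues with the Dirichlet Laplacian $\Delta^{(D)}$ on the connected open set $C$: its ground state is simple, but invariance would make $g1_B$ a second ground-state eigenfunction, a contradiction. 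No regularity of $a$ beyond $D(Q)=H_0^1(\Omega)$ is needed.

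A smaller remark: your ``main technical obstacle'' of proving $f1_A\in H_0^1(A)$ by truncation is unnecessary (and delicate near $\partial\Omega$, where you have no control on $f$). The machinery already in place gives it for free: $f^+\in D(Q^{F^+})$, $A$ is $Q^{F^+}$-invariant, so $f1_A=f^+1_A\in D((Q^{F^+})_A)=D(Q^A)=H_0^1(A)$ by Lemma~\ref{lemma:local:equality_of_restrictions}.
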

\begin{proof}
 For every eigenfunction $f$ the sets $F^+$ and $F^-$ are open. Therefore, every topological connected component of $F^+$ or $F^-$ is open in $\Omega$.  We show that these connected components coincide with the Dirichlet nodal domains. It suffices to consider $F^+$.
 First, we infer that $Q^{F^+}$ has domain $H_0^1(F^+)$ by definition of $D(Q^{F^+})$. Let $C$ be a topological connected components of $F^+$.  Lemma~\ref{lemma:local:equality_of_restrictions}
 yields that the restriction $(Q^{F^+})_C$ has domain $H_0^1(C)(=D(Q^C))$ and, thus, is a regular Dirichlet form. 
 Of course, $(Q^{F^+})_C$ is local as well. Hence, a set $B\subseteq C$ is $(Q^{F^+})_C$-invariant if and only if $1_Bf\in H_0^1(C)$ for every $f\in H_0^1(C)$. 
To finish the proof it suffices to show that there is no non-trivial $(Q^{F^+})_C$-invariant set $B$.

Assume there is a non-trivial $(Q^{F^+})_C$-invariant set $B$.
Note, that $H_0^1(C)$ is the domain of the classical Dirichlet form $\mathcal{E}^{(D)}$ in correspondence with the Dirichlet Laplacian $\Delta^{(D)}$ on $C$.
Take the first eigenvalue of $\Delta^{(D)}$, which is known to be simple, 
since $C$ is connected, and a corresponding eigenfunction $g$. Then the function $g1_B$ is an eigenfunction of $\Delta^{(D)}$ for the first eigenvalue, as well, since
for every $u\in H_0^1(C)$ one has $$\mathcal{E}^{(D)}(g1_B,u)=\mathcal{E}^{(D)}(g,1_Bu)=\lambda_1\langle g,u\rangle_{L^2(C,m)}.$$ 
This is  a contradiction to the simplicity of the eigenvalue.       
\end{proof}
\begin{corollary}
Under the assumptions stated above every eigenfunction $ f $ for $\lambda_n$ with multiplicity $ k $ has at most $(n+k-1)$ topological nodal domains.  Moreover, if  $ \mathrm{(UC1)} $ or $ \mathrm{(UC2)} $ holds then $ f $  has at most $n$ topological nodal domains. 
\end{corollary}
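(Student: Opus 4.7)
The plan is to reduce the corollary to the already-established bounds on Dirichlet nodal domains. By the theorem just proved, under the current assumptions the topological nodal domains and the Dirichlet nodal domains of $f$ agree up to sets of measure zero, so in particular they are equinumerous. Since eigenfunctions of $L$ are continuous by assumption, it therefore suffices to bound the number of Dirichlet nodal domains.

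The first assertion is immediate: $Q$ is a regular Dirichlet form with compact resolvent, so Corollary~\ref{corollary:reg:NodalDom} gives at most $n+k-1$ Dirichlet nodal domains for any continuous eigenfunction to the eigenvalue $\lambda_n$ of multiplicity $k$. Combining this with the equinumerosity above yields the general bound.

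For the sharper bound under $\mathrm{(UC1)}$ or $\mathrm{(UC2)}$ I would invoke Corollary~\ref{corollary:strong_bound}, whose only hypothesis not already built into the setup is irreducibility of $Q$ itself. To verify it, I would run the same argument that the preceding theorem used inside a topological connected component $C$ of $F^+$, but with $\Omega$ in place of $C$: if $B\subseteq \Omega$ were a non-trivial $Q$-invariant set, then since $D(Q)=H_0^1(\Omega)$ and $Q$ is local and $\Omega$ is connected, multiplying a first eigenfunction $g$ of the Dirichlet Laplacian $\Delta^{(D)}$ on $\Omega$ by $1_B$ would produce a second, linearly independent first Dirichlet eigenfunction on the connected domain $\Omega$, contradicting the simplicity of its first Dirichlet eigenvalue. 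Thus $Q$ is irreducible, and Corollary~\ref{corollary:strong_bound} bounds the Dirichlet — hence topological — nodal domain count by $n$.

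The main obstacle is mild and essentially bookkeeping: the only piece not handed to us by a named reference is the irreducibility of $Q$ on the connected domain $\Omega$, and this is a verbatim adaptation of an argument already appearing in the proof of the previous theorem. Everything else is a direct quotation of Corollary~\ref{corollary:reg:NodalDom} and Corollary~\ref{corollary:strong_bound}.
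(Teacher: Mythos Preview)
Your proposal is correct and follows exactly the route the paper intends: the corollary is stated without proof, and the natural argument is precisely the one you give --- identify topological with Dirichlet nodal domains via the preceding theorem, then invoke Corollary~\ref{corollary:reg:NodalDom} for the bound $n+k-1$ and Corollary~\ref{corollary:strong_bound} for the bound $n$. Your verification of irreducibility of $Q$ (needed for Corollary~\ref{corollary:strong_bound}) by transplanting the Dirichlet-Laplacian argument from the preceding theorem to the connected domain $\Omega$ itself is the one piece the paper leaves implicit, and your adaptation is sound since $D(Q)=H_0^1(\Omega)$ and both $Q$ and $\mathcal{E}^{(D)}$ are local with this common domain.
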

In particular, the corollary above can be applied to the Laplacian with Dirichlet boundary conditions. In this case, it is known, c.f. \cite{JerisonKenig}, 
that eigenfunctions do not vanish on non-empty open sets. Furthermore the Dirichlet nodal domains coincide with the topological nodal domains and are therefore open. Hence (UC2) holds in this case.

\subsubsection{Forms with doubly Feller resolvent}\label{section:Feller}
Next, we want to discuss a class of local, regular Dirichlet forms for which the concepts of topological nodal domains and Dirichlet nodal domains coincide. For this, we introduce the notion of doubly Feller 
resolvents. There is a vast amount of literature on notions of Feller properties for resolvents and semigroups. Unfortunately, these notions are not always consistent. Here, we follow \cite{FellerResolv}. 

Denote $C_0(X) $  be the closure of ${C_c(X)}$  with ${\|\cdot\|_\infty}$. Let  $B_b(X)$ be the space of 
bounded, real valued, Borel-measurable functions and $C_b(X)$ its subspace of continuous functions. A family of linear operators $$R_\alpha:B_b(X)\to B_b(X),\quad \alpha>0,$$ is called \emph{doubly Feller resolvent}, if 
\begin{itemize}
  \item [(1)] $R_\alpha C_0(X) \subseteq C_0(X)$ for every $\alpha>0$,
  \item[(2)] $R_\alpha B_b(X)\subseteq C_b(X)$ for every $\alpha>0$,
  \item[(3)] $\lim_{\alpha\to\infty} \alpha R_\alpha f(x)=f(x)$ for every $x\in X$, $f\in C_0(X)$
  \item[(4)] $R_\alpha f(x)-R_\beta f(x)=(\beta-\alpha)R_\alpha R_\beta f(x)$ for every $x\in X$, $f\in B_b(X)$
 \end{itemize}
hold. If $R_\alpha$, $\alpha>0$, satisfies only assertions (2) and (4) from above, then we call $R_\alpha$, $\alpha>0$, a \emph{strong Feller resolvent.} 

Let $Q$ be a Dirichlet form.
 It is well known that the resolvent $G_\alpha$, $\alpha>0$, of $Q$ can be extended to $L^\infty(X)$, see \cite[Theorem 1.4.1]{DaviesHeatKernels}. We denote this extension again by $G_\alpha$. Let $f\in B_b(X)$ and denote $[f]$ the equivalence class of all $m$-versions of $f$. Then, $[f]\in L^\infty(X)$ and we can define $G_\alpha f:= G_\alpha [f]$. 
\begin{definition}
 Let $Q$ be a Dirichlet form. 
 We say that the resolvent $G_\alpha$, $\alpha>0$,  has the \emph{strong Feller property} (respectively \emph{ the doubly Feller property}) if there is a strong Feller resolvent (respectively a doubly Feller resolvent) $R_\alpha$, $\alpha>0$, such that for every $u\in B_b(X)$ and $\alpha>0$ the function $R_\alpha u$ is an $m$-version of $G_\alpha u$.
\end{definition}
The next lemma shows that under the strong Feller property connectedness implies irreducibility.
\begin{lemma}[\cite{FellerResolv}, Lemma 4.1]\label{lemma:local:connected_implies_irreducible}
Every regular Dirichlet form on a connected set $ X $ whose resolvent has the strong Feller property is irreducible. 
\end{lemma}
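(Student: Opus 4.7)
The plan is to argue by contradiction. Suppose $Q$ is not irreducible, so there exists a $Q$-invariant Borel set $A\subseteq X$ with both $m(A)>0$ and $m(X\setminus A)>0$; I aim to extract from this data a proper non-empty clopen subset of $X$, contradicting the connectedness hypothesis.

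First I would pass from the $L^{2}$-level to the topological level using the strong Feller property. Fix $\alpha>0$ and set $\phi:=R_\alpha 1_A$ and $\psi:=R_\alpha 1_{X\setminus A}$; these are continuous $m$-versions of $G_\alpha 1_A$ and $G_\alpha 1_{X\setminus A}$, respectively, and both are non-negative by positivity preservation of $G_\alpha$. The $Q$-invariance of $A$ says that multiplication by $1_A$ commutes with $G_\alpha$, so $G_\alpha 1_A$ vanishes $m$-a.e.\ on $X\setminus A$ and $G_\alpha 1_{X\setminus A}$ vanishes $m$-a.e.\ on $A$; hence the continuous representatives $\phi$ and $\psi$ inherit these vanishing properties.

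Next I would analyse the open sets $U:=\{\phi>0\}$ and $V:=\{\psi>0\}$. They are disjoint: a point in $U\cap V$ would lie in a non-empty open neighbourhood inside $U\cap V$, which by the full support of $m$ has positive measure; but then the $m$-a.e.\ vanishing of $\phi$ on $X\setminus A$ and of $\psi$ on $A$ would force this neighbourhood to lie simultaneously in $A$ and in $X\setminus A$ up to null sets, which is impossible. They are also both non-empty: if $\phi\equiv 0$, then $G_\alpha 1_A=0$ as an element of $L^\infty$, hence $G_\alpha 1_{A_0}=0$ for every Borel $A_0\subseteq A$ of finite positive measure, contradicting the injectivity of the resolvent of the self-adjoint generator associated to $Q$; the symmetric argument gives $\psi\not\equiv 0$.

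The hardest step will be to conclude $U\cup V=X$; once this is established, $U$ is a proper non-empty clopen subset of $X$, contradicting connectedness and thereby proving irreducibility. Equivalently, I need to show that the continuous representative $R_\alpha 1_X=\phi+\psi$ of $G_\alpha 1_X$ is strictly positive everywhere. My plan is to exploit the kernel representation $R_\alpha f(x)=\int f(y)\,k_\alpha(x,dy)$ supplied by the strong Feller property: a point $x_0$ with $R_\alpha 1_X(x_0)=0$ would satisfy $k_\alpha(x_0,\cdot)\equiv 0$, so $R_\alpha f(x_0)=0$ for every $f\in B_b(X)$, and the resolvent identity $R_\beta=R_\alpha+(\alpha-\beta)R_\beta R_\alpha$ propagates this annihilation to all parameters $\beta>0$; combined with the strong convergence $\alpha G_\alpha f\to f$ in $L^{2}$ for $f\in L^{2}$, this contradicts the full support of $m$. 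I expect this strict positivity argument to be the main technical obstacle, and a fully rigorous write-up may find it cleaner to route through the associated strong Feller semigroup, on whose level analogous irreducibility-via-connectedness arguments are classical.
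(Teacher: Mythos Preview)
The paper does not supply its own proof of this lemma; it simply quotes it from the reference \cite{FellerResolv}. So there is no ``paper's proof'' to compare against, and your write-up would in effect be filling in what the authors chose to outsource.

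Your overall strategy is the standard one and most of it is fine: producing continuous versions $\phi=R_\alpha 1_A$ and $\psi=R_\alpha 1_{X\setminus A}$, using invariance to see that $\phi$ vanishes $m$-a.e.\ on $X\setminus A$ and $\psi$ vanishes $m$-a.e.\ on $A$, and then deducing from continuity and full support that $\phi\psi\equiv 0$, so that $U=\{\phi>0\}$ and $V=\{\psi>0\}$ are disjoint non-empty open sets. The genuine gap is exactly where you flag it: concluding $U\cup V=X$, i.e.\ $R_\alpha 1>0$ everywhere. Your proposed fix does not close it. From $R_\alpha 1(x_0)=0$ you correctly get $R_\beta f(x_0)=0$ for all $\beta>0$ and all $f\in B_b(X)$ via positivity and the resolvent identity, but the strong convergence $\alpha G_\alpha f\to f$ in $L^2(X,m)$ gives no information at the single point $x_0$; $L^2$-limits ignore null sets. (Pointwise convergence $\alpha R_\alpha f(x)\to f(x)$ is precisely property~(3) in the doubly Feller definition and is \emph{not} part of the strong Feller hypothesis you are allowed to use here.)

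A cleaner route, and the one typically taken in the cited reference, avoids strict positivity of $R_\alpha 1$ altogether: for a regular Dirichlet form every $Q$-invariant set has a representative that is simultaneously finely open and finely closed, and the strong Feller property forces the fine topology to coincide with the original topology. Hence the invariant set is genuinely clopen, and connectedness of $X$ finishes the proof. If you want to stay with your direct approach, you would need an independent argument ruling out ``traps'' (points with $R_\alpha 1=0$), which in general requires more than bare strong Feller.
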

The following result yields that the part of a form with doubly Feller resolvent on an open subset has a strong Feller resolvent.
\begin{lemma}[\cite{FellerResolv}, Theorem 3.1]\label{lemma:local:restr_strong_feller}
 Let $Q$ be a regular Dirichlet form whose resolvent is doubly Feller. Then, the resolvent of $Q^A$ has the strong Feller property for every open $ A\subseteq X $. 
\end{lemma}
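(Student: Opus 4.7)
Since $Q$ is a regular Dirichlet form on the locally compact separable metric space $X$, by Fukushima's construction there is an associated Hunt process $(X_t, P_x)_{x \in X}$ whose transition resolvent provides $m$-versions of the $G_\alpha$. The plan is to exploit this probabilistic representation together with the strong Markov property to reduce the strong Feller property of $G_\alpha^A$ to two separate continuity statements, each of which is then covered by one of the two halves of the doubly Feller hypothesis.

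Writing $\tau_A := \inf\{t>0 : X_t \notin A\}$ for the first exit time from $A$, the form $Q^A$ corresponds to the process killed upon exit from $A$, so that for $f \in B_b(X)$ and $x \in A$,
\[
G_\alpha^A f(x) = E_x\!\Bigl[\int_0^{\tau_A} e^{-\alpha t} f(X_t)\, dt\Bigr].
\]
Applying the strong Markov property at $\tau_A$ yields the decomposition
\[
G_\alpha f(x) = G_\alpha^A f(x) + H_\alpha^A(G_\alpha f)(x), \qquad H_\alpha^A g(x) := E_x\bigl[ e^{-\alpha \tau_A}\, g(X_{\tau_A});\, \tau_A<\infty\bigr].
\]
Thus $G_\alpha^A f$ will be bounded and continuous on $A$ as soon as both $G_\alpha f$ and $H_\alpha^A(G_\alpha f)$ are.

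The first summand $G_\alpha f$ belongs to $C_b(X)$ directly by the strong-Feller half of the assumption. For the hitting term I would first observe that it suffices to prove continuity of $H_\alpha^A g$ on $A$ for $g \in C_0(X)$: the resolvent identity $\alpha G_\alpha h \to h$ on $C_0(X)$, combined with the $C_0$-invariance of $G_\alpha$, allows $G_\alpha f$ to be approximated pointwise and boundedly by functions in $C_0(X)$, after which dominated convergence applied to $H_\alpha^A$ reduces the problem to that case. Then for $g \in C_0(X)$ continuity of $x \mapsto H_\alpha^A g(x)$ on the open set $A$ is a classical consequence of right-continuity and quasi-left-continuity of Hunt paths: if $x_n \to x \in A$, a standard coupling argument shows that $\tau_A(x_n) \to \tau_A(x)$ and $X_{\tau_A(x_n)} \to X_{\tau_A(x)}$ in the appropriate sense, and the bounded continuous integrand $e^{-\alpha \tau_A} g(X_{\tau_A})$ then yields convergence of the expectations.

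The main obstacle is precisely this last step, namely showing that the exit data $(\tau_A, X_{\tau_A})$ depend continuously on the starting point $x \in A$. This is the point at which the full doubly Feller hypothesis---rather than strong Fellerness of $G_\alpha$ alone---will be indispensable: the invariance $G_\alpha C_0(X) \subseteq C_0(X)$ rules out pathological loss of mass and, through the standard Hunt-process machinery, will force enough path regularity for exit distributions to vary continuously with the starting point inside $A$. Once this continuity is in hand, $G_\alpha^A f = G_\alpha f - H_\alpha^A(G_\alpha f)$ is bounded and continuous on $A$, which is precisely the strong Feller property of $G_\alpha^A$.
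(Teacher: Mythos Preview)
The paper does not prove this lemma at all; it is simply quoted from \cite{FellerResolv}, Theorem~3.1. Your overall strategy---the Dynkin decomposition $G_\alpha f = G_\alpha^A f + H_\alpha^A(G_\alpha f)$ via the strong Markov property, with the strong Feller half handling $G_\alpha f$ and the $C_0$-Feller half controlling the hitting term---is indeed the standard route taken in that reference.

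That said, as a proof your sketch has two genuine gaps. First, the reduction to $g\in C_0(X)$ for the hitting term is not justified as written: for $f\in B_b(X)$ the function $G_\alpha f$ lies only in $C_b(X)$, not in $C_0(X)$, so the convergence $\beta R_\beta h\to h$ on $C_0(X)$ does not directly approximate it. Second, and more seriously, you identify the continuity of $x\mapsto H_\alpha^A g(x)$ on $A$ as ``the main obstacle'' but do not actually establish it. This continuity does \emph{not} follow from right-continuity and quasi-left-continuity of paths alone---those hold for every Hunt process, Feller or not---and there is no ``standard coupling argument'' for continuous dependence of $(\tau_A,X_{\tau_A})$ on the starting point. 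The argument in \cite{FellerResolv} instead uses the $C_0$-Feller property to get semicontinuity of $x\mapsto E_x[e^{-\alpha\tau_A}]$ and then combines this with strong Feller smoothing; your proposal gestures in this direction but leaves the essential step as a black box.
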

Finally we can state the desired theorem.
\begin{theorem}
 Let $Q$ be a local, regular Dirichlet form on a locally connected space  whose resolvent is compact and  doubly Feller.   Then, the topological nodal domains of a continuous eigenfunction 
 coincide with the Dirichlet nodal domains (up to $m$-null sets). 
\end{theorem}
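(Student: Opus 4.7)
The plan is to combine the already-proved decomposition result (Theorem~\ref{proposition:local:connected_comp_decomp_in_nodal_domains}) with the doubly Feller hypothesis via Lemma~\ref{lemma:local:connected_implies_irreducible} and Lemma~\ref{lemma:local:restr_strong_feller}. Theorem~\ref{proposition:local:connected_comp_decomp_in_nodal_domains} already gives one inclusion: each topological nodal domain is, up to a set of measure zero, a disjoint union of Dirichlet nodal domains. It therefore suffices to show the converse inclusion, namely that each topological nodal domain $A$ is contained (up to a null set) in a single Dirichlet nodal domain. Equivalently, if $A$ is a topological connected component of $F^+$ or $F^-$, then $A$ itself is a Dirichlet nodal domain.

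Fix such an $A$, say a topological component of $F^+$. Since $f$ is continuous, $F^+$ is open, and since $X$ is locally connected, so is $F^+$, so $A$ is open in $X$ as well. By Remark~\ref{remark:reg:restr_local_is_local}, $Q^{F^+}$ is a local, regular Dirichlet form, and by Lemma~\ref{lemma:local:equality_of_restrictions} one has $(Q^{F^+})_A = Q^A$, where $Q^A$ is again a local, regular Dirichlet form on $L^2(A,m)$. Now I apply the doubly Feller assumption: by Lemma~\ref{lemma:local:restr_strong_feller}, the resolvent of $Q^A$ has the strong Feller property on the open set $A$. Since $A$ is a topological connected component, it is connected, so Lemma~\ref{lemma:local:connected_implies_irreducible} applies and yields that $Q^A$ is irreducible.

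Irreducibility of $Q^A$ means that the only $Q^A$-invariant subsets of $A$ are, up to null sets, either $A$ itself or $\emptyset$. Let $C \subseteq A$ be any Dirichlet nodal domain with $m(C) > 0$ (such a $C$ exists by Theorem~\ref{proposition:local:connected_comp_decomp_in_nodal_domains} applied to $A$). By definition, $C$ is $Q^{F^+}$-invariant, and since $A$ is $Q^{F^+}$-invariant by Lemma~\ref{lemma:local:connected_comp_inv}, Lemma~\ref{InvTrans} shows that $C \subseteq A$ is $(Q^{F^+})_A = Q^A$-invariant. Irreducibility of $Q^A$ then forces $C = A$ up to a set of measure zero. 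Thus the (necessarily unique) Dirichlet nodal domain contained in $A$ equals $A$ itself, which finishes the converse inclusion. Combining both inclusions yields the claimed coincidence of the two notions of nodal domain.

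The main obstacle is the irreducibility step: it is essential that $Q^A$ be a bona fide local, regular Dirichlet form on the open subset $A$ (so that Lemma~\ref{lemma:local:restr_strong_feller} and Lemma~\ref{lemma:local:connected_implies_irreducible} are applicable), and this is precisely what Lemma~\ref{lemma:local:equality_of_restrictions} guarantees in the locally connected, local setting. Without locality or local connectedness the identification $(Q^{F^+})_A = Q^A$ could fail, and one could not transfer the strong Feller (and hence irreducibility) conclusion from $Q^A$ back to the restriction $(Q^{F^+})_A$ that governs Dirichlet nodal domains. Everything else in the argument is bookkeeping via the invariance lemmas.
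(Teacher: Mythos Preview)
Your proposal is correct and follows essentially the same approach as the paper: use Theorem~\ref{proposition:local:connected_comp_decomp_in_nodal_domains} for one inclusion, then identify $(Q^{F^+})_A$ with $Q^A$ via Lemma~\ref{lemma:local:equality_of_restrictions} and apply the chain Lemma~\ref{lemma:local:restr_strong_feller} $\to$ Lemma~\ref{lemma:local:connected_implies_irreducible} to obtain irreducibility. The only difference is cosmetic: the paper stops once $(Q^{F^+})_A$ is shown to be irreducible (so that $A$ is by definition a $Q^{F^+}$-connected component, i.e.\ a Dirichlet nodal domain), whereas you spell out this last step via Lemma~\ref{InvTrans} by showing any Dirichlet nodal domain $C\subseteq A$ must equal $A$.
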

\begin{proof}
We only discuss $F^+$. By Proposition \ref{proposition:local:connected_comp_decomp_in_nodal_domains} we already know that a topological nodal domain $A$ can be decomposed 
into Dirichlet nodal domains. It is left to show that 
 $(Q^{F^+})_A$ is irreducible, then the result follows. By continuity of $f$ the set $F^+$ is open. 
 Since the space is locally connected, 
 we deduce that $A$ is open in $F^+$ and, thus, in $X$. Moreover, by Lemma~\ref{lemma:local:equality_of_restrictions} we infer $D(Q^A)=D((Q^{F^+})_A)$. Hence, it suffices to show that $Q^A$ is irreducible.
 By Lemma \ref{lemma:local:restr_strong_feller}, the resolvent of $Q^A$ has the strong Feller property. Since $A$ is a connected component, it is in particular connected. 
 Therefore, we can 
 apply Lemma \ref{lemma:local:connected_implies_irreducible} and the result follows.
\end{proof}
\begin{corollary} Let $Q$ be a local, regular Dirichlet form on a locally connected space  whose resolvent is compact and  doubly Feller. Then every continuous eigenfunction $ f $ of the eigenvalue $ \lambda_{n} $ with multiplicity $ k $ has at most $(n+k-1)$ topological nodal domains.
 Moreover,  if $Q$ satisfies either $ \mathrm{(UC1)} $ or $ \mathrm{(UC2)} $, 
 then $f$ has at most $n$ topological nodal domains. 
\end{corollary}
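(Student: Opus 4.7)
The plan is to deduce this corollary almost immediately from the theorem that precedes it together with the earlier Courant-type corollaries for Dirichlet nodal domains. The preceding theorem establishes, under exactly the hypotheses of this corollary, that the topological nodal domains of a continuous eigenfunction coincide (up to $m$-null sets) with its Dirichlet nodal domains. Since both families consist of pairwise disjoint sets of positive measure, the counting collapses: the number of topological nodal domains equals the number of Dirichlet nodal domains.

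For the first bound, I would simply apply Corollary~\ref{corollary:reg:NodalDom} to $f$, which says that any continuous eigenfunction for $\lambda_n$ of multiplicity $k$ of a regular Dirichlet form with compact resolvent has at most $(n+k-1)$ Dirichlet nodal domains. Combined with the identification above, this gives the $(n+k-1)$ bound on topological nodal domains.

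For the sharpened bound under $\mathrm{(UC1)}$ or $\mathrm{(UC2)}$, I would invoke Corollary~\ref{corollary:strong_bound}, which provides the bound $n$ on Dirichlet nodal domains once the form is irreducible (and local, regular, with compact resolvent, satisfying the unique continuation assumption). Again the identification between the two notions of nodal domain finishes the argument. The one ingredient that needs a brief remark is irreducibility: since the strong Courant bound requires $Q$ irreducible, I would note that under the standing doubly Feller hypothesis this is automatic on a connected locally connected base space by Lemma~\ref{lemma:local:connected_implies_irreducible} (applied to $Q$ itself rather than to its parts).

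The main (and essentially only) potential obstacle is this irreducibility issue: the corollary as stated does not explicitly demand connectedness of $X$, but Corollary~\ref{corollary:strong_bound} does require it. On a disconnected locally connected space one would argue separately on each $Q$-connected component (each being open by local connectedness, hence both $Q$-invariant and supporting a doubly Feller restriction via Lemma~\ref{lemma:local:restr_strong_feller}), and then patch the bounds together; this is the only step beyond citing earlier results. Everything else is a direct appeal to the preceding theorem and to Corollaries~\ref{corollary:reg:NodalDom} and \ref{corollary:strong_bound}.
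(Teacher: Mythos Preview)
Your proposal is correct and is precisely the argument the paper has in mind; in fact the paper states this corollary without proof, leaving it as an immediate consequence of the preceding theorem (topological nodal domains coincide with Dirichlet nodal domains) together with Corollary~\ref{corollary:reg:NodalDom} and Corollary~\ref{corollary:strong_bound}. Your observation about the missing irreducibility hypothesis for the second part is well taken: the paper silently assumes it, and your fix via Lemma~\ref{lemma:local:connected_implies_irreducible} (doubly Feller implies strong Feller, hence irreducible on a connected space) is the right way to close the gap.
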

%
\appendix
\section{Forms and forms in the wide sense}\label{section:forms} 
\subsection{Closed forms in the wide sense}
Let $Q$ be a real valued semi-scalar product defined on a subspace of $\ltwo$. Following the notation in \cite{Fuku}, $Q$ will be called \emph{form in the wide sense}. 
A densely defined form in the wide sense is called 
\emph{form}.

A form in the wide sense $Q$ on a subspace $D(Q)$ of $\ltwo$ is called \emph{closed}  if $D(Q)$ is a Hilbert space with respect to
$$\langle\cdot,\cdot\rangle_{Q,\alpha}:=Q(\cdot,\cdot)+\alpha\langle\cdot,\cdot\rangle_\ltwo$$ for one (and, thus, all) $\alpha>0$. We denote the corresponding norm by $\|\cdot\|_{Q_\alpha}$ and  set $\|\cdot\|_{Q}:=\|\cdot\|_{Q,1}$ and $\langle\cdot,\cdot\rangle_{Q}:=\langle\cdot,\cdot\rangle_{Q,1}$. Given a closed  form $Q$, there is a unique non-negative self-adjoint operator $L$ on $\ltwo$ with 
\begin{align*}
D(L):=\{u\in \ltwo\colon &\text{There is } v\in\ltwo \text{ such that } \\&Q(u,w)=\langle v,w \rangle_\ltwo \text{ for every } w\in D(Q)\},
\end{align*}
 \[Lu=v.\] 

The operator $L$ is called the \emph{generator} of the form $Q$. Of course, this generator gives rise to a resolvent. This yields that for every closed form there
is a resolvent in correspondence with the form. 

In the case of not densely defined closed forms in the wide sense, there is no unique generator in correspondence with the form. Nevertheless, one has still a resolvent, as the next lemma shows.
It is taken from \cite[Theorem 1.3.2]{Fuku}.
\begin{lemma}\label{FormReso}
Let $Q$ be a closed form in the wide sense. Then, for every $\alpha>0$ and $u\in\ltwo$ there is a unique  $G_\alpha u\in D(Q)$   that satisfies 
\[Q(G_\alpha u, v)+\alpha \langle G_\alpha u,v\rangle_\ltwo =\langle u,v\rangle_\ltwo\] for every $v\in D(Q)$ and the operators $G_\alpha:\ltwo\to\ltwo$ are linear, bounded and
satisfy the resolvent identity.

Moreover, for every $f\in \ltwo$ the function $\alpha \mapsto \alpha\langle f-\alpha G_\alpha f,f\rangle_\ltwo$ is non-negative and  monotonically increasing
\[f\in D(Q)\quad\Longleftrightarrow\quad \lim\limits_{\alpha\to\infty} \alpha\langle f-\alpha G_\alpha f,f\rangle_\ltwo<\infty.\]
 
Furthermore, for every $f,g\in D(Q)$, the equality 
\[Q(f,g)=\lim_{\alpha\to\infty} \alpha\langle f-\alpha G_\alpha f,g\rangle_\ltwo\] holds.
If $Q$ is densely defined, then the resolvent $ G_{\alpha} $
is strongly continuous and the equality $G_\alpha=(L+\alpha)^{-1}, \alpha>0$, holds.
\end{lemma}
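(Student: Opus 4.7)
The plan is to construct $G_\alpha$ by Riesz representation on the Hilbert space $(D(Q),\langle\cdot,\cdot\rangle_{Q,\alpha})$, extract the resolvent identity directly from the defining equation, and reduce the remaining assertions to the densely defined case by restricting to $H:=\overline{D(Q)}^{\ltwo}$, where a non-negative self-adjoint generator $L$ becomes available and the spectral calculus applies.

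To begin, $D(Q)$ is a Hilbert space under $\|\cdot\|_{Q,\alpha}$ by hypothesis, and from $\|v\|_\ltwo^2\le\alpha^{-1}\|v\|_{Q,\alpha}^2$ the inclusion $D(Q)\hookrightarrow\ltwo$ is continuous. Hence for $u\in\ltwo$ the map $v\mapsto\langle u,v\rangle_\ltwo$ is a bounded linear functional on $(D(Q),\|\cdot\|_{Q,\alpha})$, so Riesz representation produces a unique $G_\alpha u\in D(Q)$ satisfying $Q(G_\alpha u,v)+\alpha\langle G_\alpha u,v\rangle_\ltwo=\langle u,v\rangle_\ltwo$ for every $v\in D(Q)$. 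Linearity is immediate from uniqueness, and testing with $v=G_\alpha u$ together with Cauchy--Schwarz gives $\alpha\|G_\alpha u\|_\ltwo^2\le\langle u,G_\alpha u\rangle_\ltwo\le\|u\|_\ltwo\|G_\alpha u\|_\ltwo$, whence $\|G_\alpha u\|_\ltwo\le\alpha^{-1}\|u\|_\ltwo$. Subtracting the defining equations at parameters $\alpha$ and $\beta$ yields $Q(G_\alpha u-G_\beta u,v)+\alpha\langle G_\alpha u-G_\beta u,v\rangle_\ltwo=\langle(\beta-\alpha)G_\beta u,v\rangle_\ltwo$, and uniqueness gives $G_\alpha u-G_\beta u=(\beta-\alpha)G_\alpha G_\beta u$.

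For the remaining parts I would pass to $H:=\overline{D(Q)}^{\ltwo}$, noting that $\langle u,v\rangle_\ltwo=\langle P_H u,v\rangle_\ltwo$ for every $v\in D(Q)\subseteq H$, so the defining equation forces $G_\alpha u=G_\alpha P_H u\in D(Q)\subseteq H$. On the restriction $Q|_H$, now densely defined and closed in $H$, there is a unique non-negative self-adjoint generator $L$ with $G_\alpha|_H=(L+\alpha)^{-1}$. Writing $f=f_H+f_{H^\perp}$ and using that $f_H-\alpha(L+\alpha)^{-1}f_H=L(L+\alpha)^{-1}f_H$, the spectral calculus for $L$ yields
\[\alpha\langle f-\alpha G_\alpha f,f\rangle_\ltwo=\alpha\|f_{H^\perp}\|_\ltwo^2+\int_0^\infty\frac{\alpha\lambda}{\lambda+\alpha}\,d\langle E_\lambda f_H,f_H\rangle.\]
Since $\alpha\mapsto\alpha\lambda/(\lambda+\alpha)$ is non-negative on $[0,\infty)$ and increases pointwise to $\lambda$, monotone convergence delivers both non-negativity and monotonicity in $\alpha$, and shows that the limit is finite exactly when $f_{H^\perp}=0$ and $f_H\in D(L^{1/2})=D(Q)$, in which case the limit equals $Q(f)$.

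Finally, testing the defining equation with $v=g\in D(Q)$ produces $\alpha\langle f-\alpha G_\alpha f,g\rangle_\ltwo=\alpha Q(G_\alpha f,g)=Q(\alpha G_\alpha f,g)$; combining $Q(\cdot,\cdot)=\langle L^{1/2}\cdot,L^{1/2}\cdot\rangle_\ltwo$ on $D(Q)$ with the spectral convergence $\alpha L^{1/2}(L+\alpha)^{-1}f\to L^{1/2}f$ in $\ltwo$ (the multipliers $\alpha\lambda^{1/2}/(\lambda+\alpha)$ are dominated by $\lambda^{1/2}$ and converge pointwise to it) gives the desired limit $Q(f,g)$. When $D(Q)$ is dense one has $H=\ltwo$, so $G_\alpha=(L+\alpha)^{-1}$ on all of $\ltwo$ and strong continuity is the standard statement about resolvents of self-adjoint operators. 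The main obstacle is essentially bookkeeping: the generator $L$ only lives on $H$, and one has to check carefully that $G_\alpha$ vanishes on $H^\perp$ and depends only on the $H$-component of its input before using spectral calculus on $L$ to transfer the estimates back to statements on $\ltwo$.
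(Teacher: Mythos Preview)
Your proof is correct. The paper does not actually prove this lemma: it is stated with the attribution ``taken from \cite[Theorem 1.3.2]{Fuku}'' and no argument is given, so there is nothing to compare against directly. Your route---Riesz representation on $(D(Q),\langle\cdot,\cdot\rangle_{Q,\alpha})$ for existence/uniqueness, a direct manipulation of the defining identity for the resolvent equation, and then reduction to the densely defined situation on $H=\overline{D(Q)}^{\ltwo}$ via the observation $G_\alpha=G_\alpha P_H$ so that the spectral theorem for the generator $L$ on $H$ handles the monotonicity, the characterization of $D(Q)$, and the bilinear limit---is exactly the standard argument one finds in Fukushima's book and is carried out cleanly here. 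The only cosmetic point is that in the monotonicity step you do not really need monotone convergence: the formula $\alpha\|f_{H^\perp}\|^2+\int_0^\infty \alpha\lambda/(\lambda+\alpha)\,d\langle E_\lambda f_H,f_H\rangle$ is manifestly non-negative and increasing in $\alpha$ by inspection of the integrand, while monotone convergence is what identifies the limit as $\infty$ or $Q(f)$.
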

Next we introduce eigenvalues and eigenvectors of forms in the wide sense. Since in general there is no generator for forms in the wide sense  we define them via the form.

Let $Q$ be a closed form in the wide sense. Then, $\lambda\in\IR$ is called \emph{eigenvalue} of $Q$ if   
there is an $u\in D(Q)$ that satisfies \[Q(u,v)=\lambda\langle u,v\rangle_\ltwo\] for every $v\in D(Q)$. Such an $u$ is then called \emph{eigenvector} for the form $Q$
(and the eigenvalue $\lambda$). 
Note that the eigenvalues of $Q$ are non-negative, since for an eigenfunction $f$ with eigenvalue $\lambda$ one has $$0\leq Q(f)=\lambda\langle f,f\rangle_{L^2(X,m)}.$$

The following lemma shows the connection of the eigenvalues of a closed form in the wide sense $Q$ and the eigenvalues 
of the resolvent and, if $Q$ is densely defined,  of the eigenvalues of the generator. 
\begin{lemma}\label{lemma:closed:eigenvalue_resolvent}
Let $Q$ be a closed form in the wide sense and $G_\alpha$, $\alpha>0$, the resolvent of $Q$. Let $\lambda\in\IR$. Then, the following are equivalent:
\begin{itemize}
\item[(i)] $\lambda$ is an eigenvalue of $Q$, 
\item[(ii)] $\frac{1}{\lambda+\alpha}$ is an eigenvalue of $G_\alpha$ for every $\alpha>0$.
\end{itemize}
 Moreover, if $Q$ is densely defined, and, hence, has a generator $L$, then both assertions are equivalent to
 \begin{itemize}
\item[(iii)] $\lambda$ is an eigenvalue of $L$. 
\end{itemize}   
\end{lemma}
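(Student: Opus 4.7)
The plan is to prove (i) $\Leftrightarrow$ (ii) directly from the characterizing identity for $G_\alpha$ given in Lemma~\ref{FormReso}, and then to note that when $Q$ is densely defined, (i) $\Leftrightarrow$ (iii) is essentially the definition of the generator $L$. None of these implications should be difficult, but one has to be careful about in which space each object lives.

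For (i) $\Rightarrow$ (ii), I would start with an eigenvector $u\in D(Q)$ of $Q$ for $\lambda$, so $Q(u,v)=\lambda\langle u,v\rangle_\ltwo$ for all $v\in D(Q)$. Fix $\alpha>0$ and consider the element $w:=\frac{1}{\lambda+\alpha}u$. This is well defined because $\lambda\geq 0$ (as recorded just after the definition of eigenvalue) and $\alpha>0$. A direct check shows that
\[
Q(w,v)+\alpha\langle w,v\rangle_\ltwo=\frac{1}{\lambda+\alpha}\bigl(Q(u,v)+\alpha\langle u,v\rangle_\ltwo\bigr)=\langle u,v\rangle_\ltwo
\]
for every $v\in D(Q)$, so by the uniqueness clause in Lemma~\ref{FormReso} we obtain $G_\alpha u=w=\frac{1}{\lambda+\alpha}u$. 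Since $u\neq 0$, this says $\frac{1}{\lambda+\alpha}$ is an eigenvalue of $G_\alpha$ with eigenvector $u$, and this for every $\alpha>0$.

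For (ii) $\Rightarrow$ (i), pick some $\alpha>0$ for which the eigenvalue relation holds and let $0\neq u\in\ltwo$ satisfy $G_\alpha u=\frac{1}{\lambda+\alpha}u$. Since $G_\alpha$ maps $\ltwo$ into $D(Q)$, we get $u=(\lambda+\alpha)G_\alpha u\in D(Q)$. Plugging $G_\alpha u=\frac{1}{\lambda+\alpha}u$ into the defining identity of Lemma~\ref{FormReso} and multiplying by $\lambda+\alpha$ gives
\[
Q(u,v)+\alpha\langle u,v\rangle_\ltwo=(\lambda+\alpha)\langle u,v\rangle_\ltwo,\qquad v\in D(Q),
\]
i.e.\ $Q(u,v)=\lambda\langle u,v\rangle_\ltwo$ for every $v\in D(Q)$, so $\lambda$ is an eigenvalue of $Q$.

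Finally, assume $Q$ is densely defined with generator $L$. For (i) $\Rightarrow$ (iii), an eigenvector $u$ of $Q$ satisfies $Q(u,v)=\langle\lambda u,v\rangle_\ltwo$ for every $v\in D(Q)$, which by the definition of $D(L)$ means $u\in D(L)$ and $Lu=\lambda u$. Conversely, (iii) $\Rightarrow$ (i) is immediate from $Q(u,v)=\langle Lu,v\rangle_\ltwo=\lambda\langle u,v\rangle_\ltwo$ for all $v\in D(Q)$. The only subtle point worth mentioning is that the equivalence (ii) $\Leftrightarrow$ (iii) in the dense case is also the standard spectral mapping statement $G_\alpha=(L+\alpha)^{-1}$ from Lemma~\ref{FormReso}, so all three viewpoints agree. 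I don't foresee a genuine obstacle here; the whole argument is a careful unwinding of the resolvent identity, and the main thing is to make sure the membership $u\in D(Q)$ (respectively $u\in D(L)$) is justified in each direction.
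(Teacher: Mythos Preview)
Your proof is correct. The direction (i) $\Rightarrow$ (ii) is essentially the paper's argument, just phrased via the defining identity of $G_\alpha$ rather than the inner product $\langle\cdot,\cdot\rangle_{Q,\alpha}$; these are the same computation.

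For (ii) $\Rightarrow$ (i) you take a genuinely shorter route than the paper. You fix a single $\alpha$, use $G_\alpha u=\frac{1}{\lambda+\alpha}u$ to get $u\in D(Q)$, and then substitute directly into the defining identity $Q(G_\alpha u,v)+\alpha\langle G_\alpha u,v\rangle_\ltwo=\langle u,v\rangle_\ltwo$ to read off $Q(u,v)=\lambda\langle u,v\rangle_\ltwo$. The paper instead first uses the resolvent identity to show that an eigenvector of $G_\alpha$ for one $\alpha$ is an eigenvector of $G_\beta$ for every $\beta$, and then appeals to the limit formula $Q(f,g)=\lim_{\alpha\to\infty}\alpha\langle f-\alpha G_\alpha f,g\rangle_\ltwo$ from Lemma~\ref{FormReso} to recover the form eigenvalue equation. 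Your argument is more economical and in fact shows that the hypothesis is only needed for a single $\alpha$; the paper's detour has the side benefit of explicitly recording that the eigenspaces of the $G_\alpha$ are independent of $\alpha$, which is not needed for the lemma as stated. The equivalence (i) $\Leftrightarrow$ (iii) is handled identically in both, straight from the definition of the generator.
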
 
\begin{proof}
(i) $ \Longrightarrow $ (ii): Let $f\in D(Q)$ be an eigenvector of $Q$ with eigenvalue $\lambda$. Let $\alpha>0$ be arbitrary. Then, for every $g\in D(Q)$ we have 
$$\langle f,g\rangle_{Q,\alpha}=(\alpha+\lambda)\langle f,g\rangle_{L^2(X,m)}=(\alpha+\lambda)\langle G_\alpha f,g\rangle_{Q,\alpha},$$ where the last equality follows by the definition of the function $G_\alpha f$, see the first part of Lemma~\ref{FormReso}. 
Since $g\in D(Q)$ was arbitrary, and since $(D(Q),\langle\cdot,\cdot\rangle_{Q,\alpha})$ is a Hilbert space, we infer $G_\alpha f =\frac{1}{\lambda+\alpha}f$ and (ii) follows.
 
(ii) $ \Longrightarrow $ (i):  We first show  $ G_{\alpha} $ has the same the eigenvectors for all $ \alpha $. Let 
 $f$ be an eigenvector of $G_\alpha$ for eigenvalue $\mu$ and let $\lambda$ be such that $\mu=\frac{1}{\lambda+\alpha}$ and let $\beta\not=\alpha$, $\beta>0$. Then, using the resolvent identity we infer 
 $$G_\beta f=(\alpha-\beta)G_\beta G_\alpha f+G_\alpha f=(\alpha-\beta)\frac{1}{\lambda+\alpha} G_\beta f+\frac{1}{\lambda+\alpha} f.$$ Therefore, $f$ is an eigenvector of $G_\beta$ for 
 eigenvalue $\frac{1}{\lambda+\beta}$.
 
 Let $\frac{1}{\lambda+\alpha}$ be an eigenvalue of $G_\alpha$ for every $\alpha>0$ and let $f\in L^2(X,m)$ be an eigenfunction. Then, using Lemma~\ref{FormReso} we infer 
 $\frac{1}{\lambda+\alpha}f=G_\alpha f\in D(Q)$
 and, thus, $f\in D(Q)$. Again using Lemma~\ref{FormReso} we deduce (i) by
 \begin{align*}
 Q(f,g)&=\lim\limits_{\alpha\to\infty} \alpha\langle f-\alpha G_\alpha f,g\rangle_\ltwo=\lim\limits_{\alpha\to\infty} \alpha\left\langle f-\frac{\alpha}{\lambda+\alpha}  f,g\right\rangle_\ltwo\\
 &=\lim\limits_{\alpha\to\infty}  \frac{\alpha}{\lambda+\alpha} \lambda\left\langle f,g\right\rangle_\ltwo=\lambda \langle  f,g\rangle_\ltwo.
 \end{align*}
The equivalence (i) $ \Longleftrightarrow $ (iii) is standard.  
%
\end{proof}

A closed form in the wide sense is said to have \emph{compact resolvent} if one (and, thus, all) of the operators  $G_\alpha$, $\alpha>0$, is compact. 

\begin{lemma}\label{CompAeqComp}
Let $Q$ be a closed form in the wide sense on $\ltwo$ with resolvent $G_\alpha,\alpha>0$. Then, $Q$ has compact resolvent  (i.e. for every $\alpha>0$ 
the operator $G_\alpha:\ltwo\to\ltwo$ is compact) if and only if for every $\alpha>0$ the operator $G_\alpha:\ltwo\to (D(Q),\|\cdot\|_Q)$ is compact.
\end{lemma}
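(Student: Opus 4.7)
The statement is an equivalence of two compactness assertions for the same operator $G_\alpha$, viewed as mapping into two different Hilbert spaces $\ltwo$ and $(D(Q),\|\cdot\|_Q)$. Since the inclusion $(D(Q),\|\cdot\|_Q)\hookrightarrow\ltwo$ is continuous (one has $\|u\|_{L^2}\leq\|u\|_Q$), the plan is to prove both implications by chasing bounded sequences through the defining identity of the resolvent from Lemma~\ref{FormReso}. The key analytical input will be the identity $Q(G_\alpha f,g)+\alpha\langle G_\alpha f,g\rangle_\ltwo=\langle f,g\rangle_\ltwo$ tested with $g=G_\alpha f-G_\alpha f'$.

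For the easy direction, suppose $G_\alpha\colon\ltwo\to(D(Q),\|\cdot\|_Q)$ is compact, and let $(f_n)$ be bounded in $\ltwo$. Then a subsequence $(G_\alpha f_{n_k})$ converges in $\|\cdot\|_Q$, and the continuity of the inclusion $(D(Q),\|\cdot\|_Q)\hookrightarrow\ltwo$ gives convergence in $\ltwo$ as well; this is essentially a one-line argument.

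For the nontrivial direction, assume $G_\alpha\colon\ltwo\to\ltwo$ is compact. Pick a bounded sequence $(f_n)\subseteq\ltwo$ with $\|f_n\|_\ltwo\leq M$, and extract a subsequence $(f_{n_k})$ such that $(G_\alpha f_{n_k})$ is Cauchy in $\ltwo$. The goal is to upgrade this to Cauchy-ness in $\|\cdot\|_{Q,\alpha}$ (which is equivalent to $\|\cdot\|_Q$, since all the norms $\|\cdot\|_{Q,\beta}$ with $\beta>0$ are mutually equivalent on $D(Q)$). Inserting the test function $g=G_\alpha f_{n_k}-G_\alpha f_{n_l}\in D(Q)$ into the defining identity applied to $f_{n_k}-f_{n_l}$ yields
\begin{align*}
\|G_\alpha f_{n_k}-G_\alpha f_{n_l}\|_{Q,\alpha}^2
&=\langle f_{n_k}-f_{n_l},\,G_\alpha f_{n_k}-G_\alpha f_{n_l}\rangle_\ltwo\\
&\leq 2M\,\|G_\alpha f_{n_k}-G_\alpha f_{n_l}\|_\ltwo,
\end{align*}
and the right-hand side tends to zero as $k,l\to\infty$. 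Completeness of $(D(Q),\|\cdot\|_Q)$ then produces a limit in $D(Q)$, proving compactness of $G_\alpha\colon\ltwo\to(D(Q),\|\cdot\|_Q)$.

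Finally, for the ``for every $\alpha$'' quantifier, I would note that once compactness is established for a single $\alpha_0>0$, the resolvent identity $G_\alpha=G_{\alpha_0}+(\alpha_0-\alpha)G_{\alpha_0}G_\alpha$ together with boundedness of each $G_\alpha$ shows that $G_\alpha$ is compact as an operator into $\ltwo$ for all $\alpha>0$, and then the argument above promotes this to compactness into $(D(Q),\|\cdot\|_Q)$ for all $\alpha>0$. The only place requiring genuine care is the inequality displayed above; everything else is either the resolvent identity or the (trivial) continuity of the embedding $D(Q)\hookrightarrow\ltwo$.
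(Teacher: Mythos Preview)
Your proof is correct and follows essentially the same approach as the paper: both directions rest on the identity $\|G_\alpha u\|_{Q,\alpha}^2=\langle u,G_\alpha u\rangle_\ltwo$ obtained by testing the defining relation with $g=G_\alpha u$, and you apply it to $u=f_{n_k}-f_{n_l}$ exactly as the paper does. Your additional remark on propagating compactness to all $\alpha$ via the resolvent identity is a harmless elaboration the paper leaves implicit.
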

\begin{proof}
Let $\alpha>0$ be arbitrary.
One direction is clear, since the inequality $\|f\|_Q\geq \|f\|_\ltwo$ holds for every $f\in D(Q)$.
 For the other direction let $(f_n)$ be a bounded sequence in $L^2(X,m)$. Using the compactness, $(G_\alpha f_n)$ is (without loss of generality) a Cauchy sequence in $\ltwo$. By  Lemma~\ref{FormReso}
\[\|G_\alpha (f_n-f_m)\|_{Q,\alpha}=\langle G_\alpha (f_n-f_m),f_n-f_m\rangle_\ltwo \to 0,\] as $f_n-f_m$ is bounded in $L^2(X,m)$.   
\end{proof}
\begin{remarks}
It is obvious that if a closed form has compact resolvent, then the corresponding operator has compact resolvent and, hence, discrete spectrum. Since $L^2(X,m)$ is
separable by assumption, we infer that if a closed form has compact resolvent, then there is an orthonormal basis of $L^2(X,m)$ consisting of eigenfunctions of $L$ and, therefore, of $Q$. 
\end{remarks}
Next we prove the following well-known, c.f. \cite{reed_simon_IV}, variational principle for eigenfunctions of closed forms with compact resolvent. 
\begin{lemma}\label{lemma:closed:varia_eigen}
 Let $Q$ be a closed form in the wide sense with compact resolvent
  Let $v\in D(Q)$ be $L^2$-orthogonal to the eigenfunctions  of the first $ n-1 $ eigenvalues   of  
  $Q$. Then,  \[Q(v)\geq \lambda_n\|v\|_{L^2(X,m)}\] holds for the $ n $-th eigenvalue $ \lambda_{n} $ and if equality holds, then $v$ is an eigenfunction of $Q$ for $\lambda_n$. 
\end{lemma}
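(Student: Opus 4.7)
The plan is to reduce to a spectral expansion using the compactness of the resolvent. Since $Q$ has compact resolvent, fix $\alpha=1$ and apply the spectral theorem for the compact self-adjoint operator $G_1$ on $L^2(X,m)$. This gives an orthonormal basis $(e_k)_{k\in\IN}$ of $L^2(X,m)$ consisting of eigenvectors of $G_1$, and by Lemma~\ref{lemma:closed:eigenvalue_resolvent} each $e_k$ is an eigenvector of $Q$ with eigenvalue $\lambda_k$, where we may arrange the $\lambda_k$ in ascending order with multiplicity to match the enumeration in the statement. In particular each $e_k\in D(Q)$.

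Next I would verify that the rescaled family $(e_k/\sqrt{1+\lambda_k})$ is an orthonormal basis of the Hilbert space $(D(Q),\langle\cdot,\cdot\rangle_Q)$. Orthonormality is immediate from $Q(e_k,e_j)=\lambda_k\langle e_k,e_j\rangle_\ltwo$ and the orthonormality of $(e_k)$ in $L^2(X,m)$. Totality follows because any $w\in D(Q)$ that is $Q$-orthogonal to every $e_k$ satisfies $(1+\lambda_k)\langle w,e_k\rangle_\ltwo=\langle w,e_k\rangle_Q=0$, hence $\langle w,e_k\rangle_\ltwo=0$ for all $k$, so $w=0$. Consequently, for the given $v\in D(Q)$ with $L^2$-coefficients $c_k=\langle v,e_k\rangle_\ltwo$, the series $\sum_k c_ke_k$ converges to $v$ in both $\|\cdot\|_\ltwo$ and $\|\cdot\|_Q$, yielding the Parseval-type identities
\[
\|v\|_\ltwo^2=\sum_{k}c_k^2,\qquad Q(v)=\sum_{k}\lambda_k c_k^2.
\]

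Now the orthogonality hypothesis on $v$ forces $c_k=0$ for $k=1,\ldots,n-1$, so
\[
Q(v)=\sum_{k\geq n}\lambda_k c_k^2\geq \lambda_n\sum_{k\geq n}c_k^2=\lambda_n\|v\|_\ltwo^2,
\]
which gives the inequality. If equality holds, then $\sum_{k\geq n}(\lambda_k-\lambda_n)c_k^2=0$, which forces $c_k=0$ whenever $\lambda_k>\lambda_n$. Hence $v$ is a (finite) linear combination of those $e_k$ with $\lambda_k=\lambda_n$, and therefore an eigenvector of $Q$ for $\lambda_n$.

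The only step requiring real care is the claim that the expansion $v=\sum c_k e_k$ converges in the $Q$-norm and that the Parseval identity for $Q$ holds; the cleanest route is the orthonormal basis argument sketched above, whose key input is Lemma~\ref{CompAeqComp} (ensuring that $G_1$ sends bounded $L^2$-sets to relatively $Q$-compact ones, which gives discreteness of the spectrum and the completeness of the eigenvector system also in $(D(Q),\|\cdot\|_Q)$). Everything else is a direct orthogonal-expansion computation.
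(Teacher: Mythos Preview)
Your argument is correct and follows the same spectral-expansion strategy as the paper. The only difference is organizational: you first establish that $(e_k/\sqrt{1+\lambda_k})$ is an orthonormal basis of $(D(Q),\|\cdot\|_Q)$ and then invoke Parseval there to obtain the exact identity $Q(v)=\sum_k\lambda_kc_k^2$, whereas the paper bypasses this by directly computing $Q\bigl(v-\sum_{k=n}^{N}\langle v,f_k\rangle f_k\bigr)=Q(v)-\sum_{k=n}^{N}\lambda_k\langle v,f_k\rangle^2\ge0$ and letting $N\to\infty$.
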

\begin{proof}
 For the first part we use that $L^2(X,m)$ is separable and, hence, the eigenvectors form an orthonormal basis of $L^2(X,m)$. We expand $v$ into eigenfunctions and infer
 \[v=\sum_{k\geq n} \langle v, f_k\rangle_{L^2(X,m)} f_k.\] 
 Using this, a direct calclation yields
\begin{align*}
 Q\left(v-\sum_{k=n}^N \langle v, f_k\rangle_{L^2(X,m)} f_k\right)
=Q(v)-\sum_{k=n}^N \lambda_k\langle v, f_k\rangle_{L^2(X,m)}^2.
 \end{align*}
Since the left hand side is nonnegative, we infer $$Q(v)\geq\sum_{k=n}^N \lambda_k\langle v, f_k\rangle_{L^2(X,m)}^2.$$
 Now ,taking the limit $N\to\infty$ and using Parseval's identity yields \[Q(v)\geq \sum_{k=n}^\infty \lambda_k\langle v, f_k\rangle_{L^2(X,m)}^2\geq\lambda_n \sum_{k=n}^\infty\langle v, f_k\rangle_{L^2(X,m)}^2=\lambda \|v\|_{L^2(X,m)}^2.\]  
 For the second part, the assumption and the identity above  implies 
 \[\sum_{k=n}^\infty \lambda_k\langle v, f_k\rangle_{L^2(X,m)}^2=\lambda_n \sum_{k=n}^\infty\langle v, f_k\rangle_{L^2(X,m)}^2.\] This 
 yields that \[\langle v, f_k\rangle_{L^2(X,m)}=0\] holds for all $f_k$ that are not eigenfunctions for $\lambda_n$. Since there are only finitely many eigenfunctions for $\lambda_n$,
 the function $v$ is a  linear combination of eigenfunctions for $\lambda_n$ and, therefore, 
 an eigenfunction for $\lambda_n$. 
\end{proof}

\subsection{Restrictions of forms in the wide sense}\label{section:parts_of_forms}
Next, we want to discuss restrictions of closed forms in the wide sense to measurable sets and with respect to\emph{ nests}, i.e., increasing sequences of measurable subsets 
of $X$.  
\begin{definition}
Let $Q$ be a form in the wide sense on $\ltwo$. For  a measurable set $A\subseteq X$,  the \emph{restriction}  $Q_A:=Q|_{D(Q_A)}$ \emph{of} $Q$ \emph{to} $A$ is given by  \[D(Q_A):=\{u\in D(Q): u1_{X\setminus A}=0\}.\] 
For a nest $\mathcal{A}:=(A_n)_{n\in\IN}$  the \emph{restriction $Q_A^{\mathcal{A}}$ of $Q$ to $A$ with respect to $\mathcal{A}$} has
$$D(Q_A^{\mathcal{A}})=\overline{\bigcup_{n=1}^\infty D(Q_{A\cap A_n})},$$ as domain, where the closure is taken in $(D(Q),\|\cdot\|_Q)$. 
\end{definition}
\begin{remarks}
 Let $Q$ be a  form in the wide sense on $\ltwo$ and  $A\subseteq B\subseteq X$ be measurable. Then, 
and, hence,  $Q_A=(Q_B)_A$.
\end{remarks}
\begin{remarks}
 Let $Q$ be a  form in the wide sense on $\ltwo$ and let $A$ be measurable. For a nest $\mathcal{A}$ with $A_n=X$ for every $n$ we obviously have $Q_A^{\mathcal{A}}=Q_A$. Therefore, the concept of restriction with respect to a nest is a generalization of the concept of restriction to a set. 
\end{remarks}
\begin{remarks}
 Let $Q$ be a  form in the wide sense on $\ltwo$,let $A$ be measurable and $\mathcal{A}$ be a nest. Since $\|\cdot\|_Q$-convergence implies $L^2(X,m)$-convergence, one has $D(Q_A^{\mathcal{A}})\subseteq D(Q_A).$ 
\end{remarks}
\begin{lemma}\label{ResClos}
Let $Q$ be a closed form in the wide sense, let $A\subseteq X$ be measurable and let $\mathcal{A}:=(A_n)_{n\in\IN}$ be a nest. 
Then, $Q_A$ and  $Q_A^{\mathcal{A}}$ are closed forms in the wide sense.
\end{lemma}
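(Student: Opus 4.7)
The plan is to verify closedness directly from the definition: a form in the wide sense is closed precisely when its domain is complete (hence a Hilbert space) under $\langle\cdot,\cdot\rangle_{Q,1}$. Since $Q$ is already closed, $(D(Q),\|\cdot\|_Q)$ is a Hilbert space, and in both cases the domain in question is by construction a closed subspace of this Hilbert space. Thus in each case it inherits completeness, and the restricted semi-scalar product remains a semi-scalar product, which is all that is required.

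For $Q_A$, I would take a sequence $(u_n)\subseteq D(Q_A)$ that is Cauchy in $\|\cdot\|_Q$, hence convergent in $D(Q)$ to some $u\in D(Q)$ by closedness of $Q$. Because $\|\cdot\|_Q\ge\|\cdot\|_{L^2(X,m)}$, convergence in $\|\cdot\|_Q$ implies convergence in $L^2$, so $u_n 1_{X\setminus A}\to u 1_{X\setminus A}$ in $L^2$. Since each $u_n 1_{X\setminus A}=0$, the limit satisfies $u 1_{X\setminus A}=0$, giving $u\in D(Q_A)$. This shows $D(Q_A)$ is closed in $(D(Q),\|\cdot\|_Q)$ and hence complete.

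For $Q_A^{\mathcal{A}}$, the argument is essentially a tautology: the domain $D(Q_A^{\mathcal{A}})$ is defined as the $\|\cdot\|_Q$-closure of $\bigcup_{n\in\IN} D(Q_{A\cap A_n})$ inside $D(Q)$, so it is by construction a closed subspace of the Hilbert space $(D(Q),\|\cdot\|_Q)$ and thus itself complete under $\langle\cdot,\cdot\rangle_{Q,1}$.

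There is no real obstacle here; the only thing to check is that restricting a semi-scalar product to a subspace still yields a semi-scalar product, which is immediate. The key point being used in both parts is simply the trivial fact that a closed subspace of a Hilbert space is a Hilbert space, combined with $\|\cdot\|_Q$ dominating the $L^2$-norm in the first case and the by-definition-closed construction in the second.
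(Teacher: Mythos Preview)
Your proposal is correct and matches the paper's proof essentially line for line: the paper also takes a $\|\cdot\|_Q$-Cauchy sequence in $D(Q_A)$, passes to the limit in $D(Q)$ by closedness, and uses that $\|\cdot\|_Q$-convergence implies $L^2$-convergence to conclude the limit vanishes on $X\setminus A$; for $Q_A^{\mathcal{A}}$ it likewise just observes that the domain is by definition a closed subspace of $(D(Q),\|\cdot\|_Q)$.
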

\begin{proof}
Observe that $\|\cdot\|_{Q_A}$ is the restriction of $\|\cdot\|_Q$ to $D(Q_A)$. Let $(f_n)$ be a Cauchy sequence in $(D(Q_A),\|\cdot\|_{Q_A})$ and, thus, a Cauchy sequence in 
$(D(Q),\|\cdot\|_{Q})$. Therefore, there is $f\in D(Q)$ such that $\|f_n-f\|_Q\to 0$. But $\|\cdot\|_Q$-convergence implies $\ltwo$-convergence. Hence, $f|_{X\setminus A}=0$ almost everywhere. This yields $f\in D(Q_A)$. For a nest $\mathcal{A}:=(A_n)_{n\in\IN}$ the domain $D(Q_{A}^{\mathcal{A}})$ is obviously a closed subspace of $(D(Q),\|\cdot\|_Q)$. Hence, $Q_{A}^{\mathcal{A}}$ is a closed form in the wide sense. 
\end{proof}
By the previous lemma, the restrictions $Q_A$ and $ Q_{A}^{\mathcal{A}} $ of a closed form $Q$ in the wide sense are closed form in the wide sense and, thus, 
have a resolvent, c.f. Lemma~\ref{FormReso}, which we denote by $G_\alpha^A$ and  $G_\alpha^{\mathcal{A},A}$, $\alpha>0$. 

The following proposition shows the connection between the resolvent and the resolvent of the restriction. The first part can be extracted from the proof of  \cite[Proposition~2.10]{GreenFor}.

\begin{proposition}\label{ProjectRes}
Let $Q$ be a closed form in the wide sense on $\ltwo$, let $A\subseteq X$ be measurable and  $\mathcal{A}$ be a nest. 
For the  orthogonal projections $P_A$ and $P_{\mathcal{A},A}$  from $(D(Q),\|\cdot\|_{Q})$ to $(D(Q_A),\|\cdot\|_{Q_{A}})$ and $(D(Q_A^{\mathcal{A}}),\|\cdot\|_{Q_{A}^{\mathcal{A}}})$
we have\[G_1^A =P_AG_1, \quad\mbox{and}\quad G_1^{\mathcal{A},A} =P_{\mathcal{A},A} G_1\qquad \mbox{on }\ltwo.\] 
\end{proposition}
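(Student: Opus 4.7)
The plan is to verify both identities by invoking the variational characterisation of the resolvent of a closed form in the wide sense, Lemma~\ref{FormReso}, together with the defining property of the orthogonal projection in the Hilbert space $(D(Q),\langle\cdot,\cdot\rangle_{Q,1})$. Recall that $G_1 f$ is the unique element of $D(Q)$ satisfying
\[
\langle G_1 f, v\rangle_{Q,1} = \langle f, v\rangle_{L^2(X,m)} \qquad \text{for all } v\in D(Q),
\]
and analogously $G_1^A f$ is the unique element of $D(Q_A)$ satisfying the same equation for all $v\in D(Q_A)$, while $G_1^{\mathcal{A},A} f$ is the unique element of $D(Q_A^{\mathcal{A}})$ satisfying it for all $v\in D(Q_A^{\mathcal{A}})$. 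Here one should remember that, under the convention that operators on $L^2$-subspaces are extended via multiplication by indicator functions, $\langle f,v\rangle_{L^2(X,m)}=\langle f1_A,v\rangle_{L^2(X,m)}$ whenever $v$ vanishes outside $A$.

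For the first identity, I would check that $P_A G_1 f$ satisfies the variational characterisation of $G_1^A f$. By Lemma~\ref{ResClos} the space $D(Q_A)$ is a closed subspace of $(D(Q),\|\cdot\|_Q)$, so $P_A$ is well defined and $P_A G_1 f\in D(Q_A)$. For arbitrary $v\in D(Q_A)\subseteq D(Q)$ the orthogonality of $G_1 f - P_A G_1 f$ to $D(Q_A)$ with respect to $\langle\cdot,\cdot\rangle_{Q,1}$ gives
\[
\langle P_A G_1 f, v\rangle_{Q,1} = \langle G_1 f, v\rangle_{Q,1} = \langle f, v\rangle_{L^2(X,m)},
\]
where the second equality is the defining property of $G_1 f$. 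By the uniqueness part of Lemma~\ref{FormReso} applied to $Q_A$, this forces $P_A G_1 f = G_1^A f$.

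The second identity is proven in precisely the same way. Again by Lemma~\ref{ResClos} the domain $D(Q_A^{\mathcal{A}})$ is a closed subspace of $(D(Q),\|\cdot\|_Q)$, so $P_{\mathcal{A},A}$ makes sense. For every $v\in D(Q_A^{\mathcal{A}})\subseteq D(Q)$ orthogonality yields $\langle P_{\mathcal{A},A}G_1 f, v\rangle_{Q,1}=\langle G_1 f, v\rangle_{Q,1}=\langle f,v\rangle_{L^2(X,m)}$, and uniqueness of the resolvent of $Q_A^{\mathcal{A}}$ gives $P_{\mathcal{A},A}G_1 f = G_1^{\mathcal{A},A} f$.

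There is no real obstacle here: the argument is a one-line application of the Riesz-type characterisation of the resolvent combined with the defining orthogonality of the projection. The only points deserving care are (i) making sure that the inner product $\langle\cdot,\cdot\rangle_{Q,1}$ used to define $P_A$ and $P_{\mathcal{A},A}$ is exactly the one appearing in the resolvent equations for $Q_A$ and $Q_A^{\mathcal{A}}$, which is automatic since both restrictions are literally $Q$ restricted to a subspace, and (ii) using Lemma~\ref{ResClos} to guarantee that the relevant subspaces are genuinely closed, so that the orthogonal projections are available.
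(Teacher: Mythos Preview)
Your proof is correct and follows essentially the same approach as the paper: both use the variational characterisation of the resolvent from Lemma~\ref{FormReso} together with the orthogonality defining $P_A$ (respectively $P_{\mathcal{A},A}$) to identify $P_A G_1 f$ as the unique solution of the resolvent equation for $Q_A$. The only difference is organisational: the paper first treats $f\in L^2(A,m)$ and then extends to arbitrary $f\in L^2(X,m)$ by showing $P_A G_1(f1_{X\setminus A})=0$, whereas you handle all $f$ at once via the observation $\langle f,v\rangle_{L^2(X,m)}=\langle f1_A,v\rangle_{L^2(X,m)}$ for $v\in D(Q_A)$; your version is slightly more direct but the substance is identical.
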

\begin{proof}
Obviously the space  $D(Q_A)$ is closed in $(D(Q), \|\cdot\|_Q)$, since $Q_A$ is a closed form in the wide sense and $\|\cdot\|_{Q_A}$ is the restriction of $\|\cdot\|_Q$ to $D(Q_A)$.
By Lemma~\ref{FormReso} we infer 
 for every $f\in L^2(A,m)$ and
$g\in D(Q_A)$
\[\langle G_1 f,g\rangle_{Q}=\langle f,g\rangle_\ltwo=\langle f,g\rangle_{L^2(A,m)}=\langle G_1^A f,g\rangle_{Q_A},\]
and, furthermore,
\[\langle G_1 f,g\rangle_{Q}=\langle P_AG_1 f,g\rangle_{Q}+\langle (I-P_A)G_1 f,g\rangle_{Q}=\langle P_AG_1 f,g\rangle_{Q}=\langle P_AG_1 f,g\rangle_{Q_A}\] holds as well. 
Hence, we deduce $G_1^A f=P_AG_1 f$ for every $f\in L^2(A,m)$.

Now let $f\in\ltwo$ be arbitrary. Then, we have 
\[G_1^A f=G_1^A (f1_A)=P_AG_1 (f1_A)=P_AG_1 f-P_AG_1 (f1_{X\setminus A}).\] It is left to show $P_AG_1 (f1_{X\setminus A})=0$. Since this function is an element of $D(Q_A)$ it suffices to show $\langle P_AG_1 (f1_{X\setminus A}),g\rangle_{Q_A}=0$ for every $g\in D(Q_A)$. This can be seen by the following calculation, using the self-adjointness of the projection $P_A$ on $D(Q)$,
\begin{align*}
\langle P_AG_1 (f1_{X\setminus A}),g\rangle_{Q_A}
&=\langle P_AG_1 (f1_{X\setminus A}),g\rangle_{Q}=\langle G_1 (f1_{X\setminus A}),P_Ag\rangle_{Q}\\
&=\langle G_1 (f1_{X\setminus A}),g\rangle_{Q}=\langle f1_{X\setminus A},g\rangle_\ltwo=0.
\end{align*}
This shows that $Q_A$ is a closed form in the wide sense on $L^2(A,m)$.

The proof of the equality $G_1^{\mathcal{A},A} =P_{\mathcal{A},A} G_1$ works analogously.
\end{proof}

Next, we show that compactness of the resolvent carries over to  restrictions.
\begin{proposition}\label{RestrCpt}
Let $Q$ be a closed form in the wide sense  on $\ltwo$ with compact resolvent and let $A\subseteq X$ be measurable and $\mathcal{A}$ be a nest. Then, $Q_A$ and $Q_A^{\mathcal{A}}$ have compact resolvent.
\end{proposition}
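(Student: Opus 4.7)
The plan is to recognize that both restrictions have resolvents that factor through the original resolvent composed with an orthogonal projection, and then combine this with the equivalence between compactness in $L^2$ and compactness into $(D(Q),\|\cdot\|_Q)$.

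First I would invoke Lemma~\ref{CompAeqComp} applied to $Q$ to reinterpret the compactness hypothesis: the operator $G_1\colon L^2(X,m)\to(D(Q),\|\cdot\|_Q)$ is compact. Next I would apply Proposition~\ref{ProjectRes} to obtain the factorizations
\[
G_1^A=P_A\, G_1,\qquad G_1^{\mathcal{A},A}=P_{\mathcal{A},A}\, G_1,
\]
where $P_A$ and $P_{\mathcal{A},A}$ are the orthogonal projections of $(D(Q),\|\cdot\|_Q)$ onto $(D(Q_A),\|\cdot\|_{Q_A})$ and $(D(Q_A^{\mathcal{A}}),\|\cdot\|_{Q_A^{\mathcal{A}}})$, respectively. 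Both projections are bounded operators on $(D(Q),\|\cdot\|_Q)$ (indeed of norm at most one), and by Lemma~\ref{ResClos} their ranges are closed in $(D(Q),\|\cdot\|_Q)$, so these compositions make sense.

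Since the composition of a bounded operator with a compact operator is compact, I obtain that $P_A G_1$ and $P_{\mathcal{A},A} G_1$ are compact as operators from $L^2(X,m)$ into $(D(Q),\|\cdot\|_Q)$. Because the inequality $\|\cdot\|_{L^2(X,m)}\le\|\cdot\|_Q$ makes the inclusion $(D(Q),\|\cdot\|_Q)\hookrightarrow L^2(X,m)$ continuous, the same operators remain compact when viewed as maps $L^2(X,m)\to L^2(X,m)$. Hence $G_1^A$ and $G_1^{\mathcal{A},A}$ are compact operators on $L^2(X,m)$, which by definition means that $Q_A$ and $Q_A^{\mathcal{A}}$ have compact resolvent.

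There is essentially no obstacle here: the real work has already been done in Proposition~\ref{ProjectRes} (which provides the factorization through a projection) and Lemma~\ref{CompAeqComp} (which upgrades compactness of $G_1$ to a stronger mapping property). The only point to check carefully is that $\|\cdot\|_{Q_A}$ and $\|\cdot\|_{Q_A^{\mathcal{A}}}$ are literally the restrictions of $\|\cdot\|_Q$ to the respective subspaces, so that the projections $P_A$ and $P_{\mathcal{A},A}$ are contractive and the factorization identities of Proposition~\ref{ProjectRes} may be composed with $G_1$ in the functional-analytic sense.
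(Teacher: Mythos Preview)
Your proof is correct and follows essentially the same route as the paper's: invoke Lemma~\ref{CompAeqComp} to view $G_1$ as compact into $(D(Q),\|\cdot\|_Q)$, use Proposition~\ref{ProjectRes} to write $G_1^A=P_AG_1$ and $G_1^{\mathcal{A},A}=P_{\mathcal{A},A}G_1$, and conclude by composing a compact operator with a bounded projection. The paper compresses this into a single sentence, but the argument is identical.
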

\begin{proof}
This follows immediately from the  $G_1^A=P_AG_1$ and $G_1^{\mathcal{A},A}=P_{\mathcal{A},A}G_1$ as  products of  bounded with  compact (by Lemma~\ref{CompAeqComp}) operators.
\end{proof} 

\subsection{Positivity preserving forms}  
A closed form in the wide sense $Q$ on a subspace $D(Q)$ of $\ltwo$ is called \emph{positivity preserving} if for every $u\in D(Q)$ one has $|u|\in D(Q)$ and $Q(|u|)\leq Q(u)$.  

\begin{lemma}\label{PosPartIneq}
Let $Q$ be a positivity preserving form in the wide sense. Then, for every $u\in D(Q)$ one has $u_+,u_-\in D(Q)$ and $$ Q(u_+)\leq Q(u),\quad Q(u_-)\leq Q(u)\quad\mbox{ and }\quad Q(u_+,u_-)\leq 0. $$
\end{lemma}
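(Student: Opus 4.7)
The plan is to express $u_+$ and $u_-$ as linear combinations of $u$ and $|u|$, namely $u_+ = \tfrac{1}{2}(u+|u|)$ and $u_- = \tfrac{1}{2}(|u|-u)$. By definition of a positivity preserving form in the wide sense we have $|u| \in D(Q)$, so both $u_+$ and $u_-$ lie in $D(Q)$ as linear combinations of elements of $D(Q)$. This handles the membership claim without further work.

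Next, I would expand using the bilinearity and symmetry of $Q$ to obtain the two identities
\[
Q(u_+) + Q(u_-) = \tfrac{1}{2}\bigl(Q(u) + Q(|u|)\bigr),
\qquad
Q(u_+, u_-) = \tfrac{1}{4}\bigl(Q(|u|) - Q(u)\bigr),
\]
where in the first identity the cross terms $\pm\tfrac{1}{2}Q(u,|u|)$ coming from the two squares cancel, and the second uses $Q(u,|u|)=Q(|u|,u)$ to eliminate the mixed terms. Applying the defining inequality $Q(|u|) \leq Q(u)$ of positivity preservation to the second identity gives $Q(u_+, u_-) \leq 0$ at once. Applied to the first, it yields $Q(u_+) + Q(u_-) \leq Q(u)$; since $Q$ is a semi-scalar product both summands are non-negative, so each is bounded by the sum, producing $Q(u_+) \leq Q(u)$ and $Q(u_-) \leq Q(u)$.

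The argument is essentially purely algebraic, relying only on the bilinearity and symmetry of $Q$ together with the single inequality $Q(|u|) \leq Q(u)$. There is no genuine analytic obstacle; the only point that demands care is bookkeeping the signs in the polarisation step so that the $Q(u,|u|)$ cross terms cancel correctly in the first identity and combine correctly in the second.
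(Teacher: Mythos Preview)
Your proof is correct and follows essentially the same approach as the paper: both arguments use $u_\pm=\tfrac12(|u|\pm u)$ for the membership claim, expand bilinearly, and reduce everything to the single inequality $Q(|u|)\le Q(u)$. The only cosmetic difference is that the paper expands $Q(|u|)=Q(u_++u_-)$ and $Q(u)=Q(u_+-u_-)$ to deduce $Q(u_+,u_-)\le 0$ first and then $Q(u_\pm)\le Q(u)$, whereas you expand $Q(u_\pm)$ in terms of $Q(u)$, $Q(|u|)$ directly; the content is identical.
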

\begin{proof}
 We get $u_+,u_-\in D(Q)$ by $u_+=\frac12(u+|u|)$ and $u_-=\frac12(u-|u|)$.
 The inequality $Q(u_+,u_-)\leq 0$ follows from \[Q(u_+)+2Q(u_+,u_-)+Q(u_+)=Q(|u|)\leq Q(u)=Q(u_+)-2Q(u_+,u_-)+Q(u_-).\]
  Furthermore,
\[Q(u)=Q(u_+)+Q(u_-)-2Q(u_+,u_-)\geq Q(u_+)\] by $Q(u_+,u_-)\leq 0$. The inequality for $u_-$ can be seen analogously.
\end{proof}
The following lemma is proven in \cite[Theorem XIII.50]{reed_simon_IV} in the context of densely defined forms. We include it for convenience of the reader.
\begin{lemma}\label{ResPosPres}
Let $Q$ be a positivity preserving form in the wide sense. Then, the resolvent satisfies $G_\alpha f\geq 0$ for every $\alpha>0$ and every non-negative $f\in\ltwo$.
\end{lemma}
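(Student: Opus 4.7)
The plan is to use the characterization of the resolvent from Lemma~\ref{FormReso}, namely $Q(G_\alpha f, v) + \alpha \langle G_\alpha f, v\rangle_\ltwo = \langle f, v\rangle_\ltwo$ for all $v \in D(Q)$, and to test it against the negative part of $u := G_\alpha f$. The key inputs are already established: Lemma~\ref{PosPartIneq} guarantees $u_-, u_+ \in D(Q)$ (so $u_-$ is a legitimate test function) and provides the crucial inequality $Q(u_+, u_-) \leq 0$.

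First I would decompose $u = u_+ - u_-$. Since $u_+ \cdot u_- = 0$ pointwise, one immediately has $\langle u_+, u_-\rangle_\ltwo = 0$ and therefore $\langle u, u_-\rangle_\ltwo = -\|u_-\|_\ltwo^2$. Substituting $v = u_-$ in the defining equation for the resolvent then gives
\[
Q(u_+, u_-) - Q(u_-) - \alpha\|u_-\|_\ltwo^2 \;=\; \langle f, u_-\rangle_\ltwo.
\]
By Lemma~\ref{PosPartIneq} the first term on the left is $\leq 0$, the second is $\geq 0$ (since $Q$ is a semi-scalar product), and $\alpha > 0$; hence the left-hand side is $\leq -\alpha\|u_-\|_\ltwo^2$. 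On the other hand, $f \geq 0$ and $u_- \geq 0$ almost everywhere, so the right-hand side is $\geq 0$. Combining, $-\alpha\|u_-\|_\ltwo^2 \geq 0$, which forces $u_- = 0$, i.e.\ $G_\alpha f = u_+ \geq 0$.

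There is no serious obstacle here; everything is set up for this short computation. The only points to take care of are (i) verifying that $u_- \in D(Q)$ so that it may be used as a test vector, which is exactly Lemma~\ref{PosPartIneq}, and (ii) invoking the sign $Q(u_+, u_-) \leq 0$, also supplied by that lemma. The rest is just a sign chase in the resolvent identity.
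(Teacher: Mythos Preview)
Your proof is correct and is essentially the same argument as the paper's, just packaged more directly: the paper tests the resolvent identity against $g=|G_\alpha f|-G_\alpha f=2(G_\alpha f)_-$ via a norm comparison $\||G_\alpha f|\|_{Q,\alpha}\geq\|G_\alpha f\|_{Q,\alpha}$ and then invokes $Q(|u|)\leq Q(u)$ to force equality, whereas you test directly against $u_-$ and invoke the consequence $Q(u_+,u_-)\leq 0$ from Lemma~\ref{PosPartIneq}. The underlying identity and sign chase are the same.
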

\begin{proof}
 Let $f\in\ltwo$ with $f\geq 0$ and $g\in D(Q)$, $g\geq 0$. Then, one has
 \begin{align*}
 \|G_\alpha f+g\|_{Q,\alpha}&=\|G_\alpha f\|_{Q,\alpha}+\|g\|_{Q,\alpha}+2\langle G_\alpha f,g\rangle_{Q,\alpha}\\
 &=\|G_\alpha f\|_{Q,\alpha}+\|g\|_{Q,\alpha}+2\langle f,g\rangle_\ltwo\\
 &\geq \|G_\alpha f\|_{Q,\alpha}.
 \end{align*}
In particular we can apply this for $g=|G_\alpha f|-G_\alpha f$ and infer 
\[\||G_\alpha f|\|_{Q,\alpha}\geq \|G_\alpha f\|_{Q,\alpha}\] and, since $Q$ is positivity preserving, $\|G_\alpha f\|_{Q,\alpha}=\||G_\alpha f|\|_{Q,\alpha}$ follows. Hence, \[\|g\|_{Q,\alpha}+2\langle f,g\rangle_\ltwo=0\] and, as 
the second summand is non-negative, we deduce $\|g\|_{Q,\alpha}=0$. Since $\|\cdot\|_{Q,\alpha}$ is a norm we get $g=0$ and, hence, $|G_\alpha f|=G_\alpha f$. 
\end{proof}

\begin{lemma}\label{InvPosPres}
Let $Q$ be a positivity preserving form in the wide sense and let $A\subseteq X$ be measurable and $\mathcal{A}$ be a nest. Then, $Q_A$ and $Q_A^{\mathcal{A}}$ are positivity preserving forms in the wide sense on $L^2(A,m)$.
\end{lemma}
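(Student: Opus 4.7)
The plan is to reduce everything to the defining inequality $Q(|v|)\leq Q(v)$ for $Q$ itself, using Lemma~\ref{ResClos} to get closedness for free. First, I would observe that by Lemma~\ref{ResClos} both $Q_A$ and $Q_A^{\mathcal{A}}$ are already known to be closed forms in the wide sense, so only the positivity preserving bound $Q(|u|)\leq Q(u)$ with $|u|$ in the appropriate domain remains to be checked in each case.

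The case of $Q_A$ is immediate: if $u\in D(Q_A)$, then $u\in D(Q)$ so $|u|\in D(Q)$, and since $|u|$ vanishes on $X\setminus A$ exactly where $u$ does, we have $|u|\in D(Q_A)$; the inequality $Q_A(|u|)=Q(|u|)\leq Q(u)=Q_A(u)$ is then inherited directly from $Q$.

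The interesting case is $Q_A^{\mathcal{A}}$, since here $|u|$ does not obviously lie in the $\|\cdot\|_Q$-closure of $\bigcup_n D(Q_{A\cap A_n})$. I would pick $u\in D(Q_A^{\mathcal{A}})$ and a sequence $(u_n)$ in this union with $u_n\to u$ in $\|\cdot\|_Q$. By the $Q_A$-step applied to $Q_{A\cap A_n}$, each $|u_n|$ still lies in $\bigcup_m D(Q_{A\cap A_m})\subseteq D(Q_A^{\mathcal{A}})$, and $\||u_n|\|_Q^2=Q(|u_n|)+\||u_n|\|_\ltwo^2\leq Q(u_n)+\|u_n\|_\ltwo^2=\|u_n\|_Q^2$ shows that $(|u_n|)$ is bounded in the Hilbert space $(D(Q),\|\cdot\|_Q)$.

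The main obstacle is that $v\mapsto|v|$ is not in general continuous for $\|\cdot\|_Q$, so one cannot simply pass to a strong limit. I would handle this by a standard weak-compactness argument: extract a subsequence $|u_{n_k}|\rightharpoonup v$ weakly in $(D(Q),\|\cdot\|_Q)$. Weak $\|\cdot\|_Q$-convergence implies weak $\ltwo$-convergence, while the strong estimate $\bigl||u_{n_k}|-|u|\bigr|\leq|u_{n_k}-u|$ in $\ltwo$ yields $|u_{n_k}|\to|u|$ strongly in $\ltwo$, so $v=|u|$. Because $D(Q_A^{\mathcal{A}})$ is a closed linear subspace of $(D(Q),\|\cdot\|_Q)$ and hence weakly closed, this gives $|u|\in D(Q_A^{\mathcal{A}})$. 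Finally, weak lower semicontinuity of the Hilbert norm yields
\[
\||u|\|_Q^2\leq\liminf_{k\to\infty}\||u_{n_k}|\|_Q^2\leq\liminf_{k\to\infty}\|u_{n_k}\|_Q^2=\|u\|_Q^2,
\]
and subtracting the common $L^2$-part (since $\||u|\|_\ltwo=\|u\|_\ltwo$) gives $Q_A^{\mathcal{A}}(|u|)\leq Q_A^{\mathcal{A}}(u)$, completing the proof.
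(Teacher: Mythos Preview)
Your proof is correct and follows the same overall strategy as the paper: reduce $Q_A$ to $Q$ directly, and for $Q_A^{\mathcal{A}}$ approximate $u$ by $(u_n)$ from $\bigcup_n D(Q_{A\cap A_n})$, observe $|u_n|$ stays in this union with $\||u_n|\|_Q\leq\|u_n\|_Q$, and then pass to a limit.

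The only difference is in the limiting device. The paper invokes the Banach--Saks theorem: from the $\|\cdot\|_Q$-bounded sequence $(|u_n|)$ with $|u_n|\to|u|$ in $L^2$, one extracts a subsequence whose Ces\`aro means converge \emph{strongly} in $\|\cdot\|_Q$ to $|u|$, and these means visibly lie in $\bigcup_n D(Q_{A\cap A_n})$; the inequality $Q(|u|)\leq Q(u)$ then follows in one line from positivity preservation of $Q$ itself, with no appeal to lower semicontinuity. Your route via weak sequential compactness, weak closedness of the closed subspace $D(Q_A^{\mathcal{A}})$, and weak lower semicontinuity of the norm is equally valid and arguably more elementary, since it avoids citing Banach--Saks. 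Note, however, that your final lower-semicontinuity argument for the inequality is unnecessary: once you have $|u|\in D(Q_A^{\mathcal{A}})\subseteq D(Q)$, the bound $Q_A^{\mathcal{A}}(|u|)=Q(|u|)\leq Q(u)=Q_A^{\mathcal{A}}(u)$ is immediate from the hypothesis on $Q$.
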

\begin{proof}
The form in the wide sense $Q_A$ is closed as seen in Lemma \ref{ResClos}.
Let $f\in D(Q_A)$ be arbitrary. Then $f\in D(Q)$ and, hence, $|f|\in D(Q)$. Furthermore, the equality  $|f|1_{X\setminus A}=|f1_{X\setminus A}|=0$ holds. 
This yields $|f|\in D(Q_A)$. Finally, we infer
\[Q_A(|f|)=Q(|f|)\leq Q(f)=Q_A(f).\] 

Let $\mathcal{A}$ be a nest.
Let $f\in D(Q_{A}^{\mathcal{A}})$ be arbitrary. We show $|f|\in D(Q_{A}^{\mathcal{A}})$.
 Let $(f_n)$ be a sequence in  $\bigcup_{n=1}^\infty D(Q_{A\cap A_n})$ such that $\|f_n-f\|_{Q_{A}^{\mathcal{A}}}\to 0,$ $n\to\infty$. 
 First, we infer $|f_n|\in \bigcup_{n=1}^\infty D(Q_{A\cap A_n})$ as every $Q_{A\cap A_n}$ is positivity preserving.
  Moreover, we have $\| |f_n|-|f|\|_{L^2(A,m)}\to 0$ 
  and $$\sup_{n\in\IN} \| |f_n|\|_{Q_{A}^{\mathcal{A}}}\leq \sup_{n\in\IN} \|f_n\|_{Q_{A}^{\mathcal{A}}}<\infty.$$ 
  Hence, the Banach-Saks theorem, see for example \cite[Theorem A.4.1]{ChenFuku} yields that the Ces\`aro-means of a subsequence of $(|f_n|)$ converge with respect to $\|\cdot\|_{Q_{A}^{\mathcal{A}}}$ to $|f|$. 
  Since these Ces\`aro means are elements of $\bigcup_{n=1}^\infty D(Q_{A\cap A_n})$ we infer $|f|\in D(Q_{A}^{\mathcal{A}})$.
  Finally,   $ Q_A^{\mathcal{A}}(|f|)=Q(|f|)\leq Q(f)=Q_A^{\mathcal{A}}(f). $
\end{proof}
The following proposition shows a useful inequality for the projection $P_A$. It can be extracted from the proof of  \cite[Proposition~2.10]{GreenFor}.
\begin{proposition}\label{ProjectIneq}
Let $Q$ be a positivity preserving form in the wide sense on $\ltwo$ and let $A\subseteq X$ be measurable and $\mathcal{A}$ be a nest.
Let $P_A$ and $P_{\mathcal{A},A}$  be the orthogonal projections from $D(Q)$ to $D(Q_A)$ and $D(Q_{A}^{\mathcal{A}})$. Then, for every $f\in D(Q)$ with $f\geq 0$ one has \[P_A f\leq f\quad\mbox{ and }\quad P_{\mathcal{A},A}f\leq f.\]
\end{proposition}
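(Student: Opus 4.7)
The plan is to exploit the characterisation of $P_A f$ (respectively $P_{\mathcal{A},A} f$) as the unique element of $D(Q_A)$ (respectively $D(Q_A^{\mathcal{A}})$) that minimises the form-norm distance to $f$. Together with the observation that $\|u^+\|_Q\leq\|u\|_Q$ for every $u\in D(Q)$, which follows from Lemma~\ref{PosPartIneq} and the pointwise bound $|u^+|\leq|u|$, this should force the truncation $P_A f\wedge f$ to coincide with $P_A f$, giving the desired inequality.

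First I would treat $P_A$. Set $g=P_A f$ and consider the candidate $g\wedge f=\tfrac{1}{2}(g+f-|g-f|)$. Since $Q$ is positivity preserving and $g,f\in D(Q)$, this lies in $D(Q)$; on $X\setminus A$ one has $g=0$ and $f\geq 0$, so $g\wedge f$ vanishes there and hence $g\wedge f\in D(Q_A)$. Writing $f-(g\wedge f)=(f-g)^+$ and applying Lemma~\ref{PosPartIneq} to $u=f-g$ yields $Q((f-g)^+)\leq Q(f-g)$, while trivially $\|(f-g)^+\|_{\ltwo}\leq\|f-g\|_{\ltwo}$. Combining these gives $\|f-(g\wedge f)\|_Q\leq\|f-g\|_Q$, and uniqueness of the orthogonal projection onto $D(Q_A)$ forces $g\wedge f=g$, i.e.\ $P_A f\leq f$.

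The nest case is handled by exactly the same minimality argument, once one verifies that $g\wedge f\in D(Q_A^{\mathcal{A}})$ for $g=P_{\mathcal{A},A} f$. This membership is the main obstacle, since $D(Q_A^{\mathcal{A}})$ is defined as a $\|\cdot\|_Q$-closure rather than through a support condition, so a pointwise truncation of $g$ need not obviously remain in the space. To overcome this, I would choose $g_n\in D(Q_{A\cap A_n})$ with $g_n\to g$ in $\|\cdot\|_Q$ and work with $g_n\wedge f$. Each $g_n\wedge f$ lies in $D(Q)$ by positivity preservation and vanishes outside $A\cap A_n$ (since $g_n=0$ there and $f\geq 0$), so $g_n\wedge f\in D(Q_{A\cap A_n})$. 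The sequence $(g_n\wedge f)$ is bounded in $\|\cdot\|_Q$ (using positivity preservation to estimate $Q(|g_n-f|)$ by $Q(g_n-f)$) and converges to $g\wedge f$ in $\ltwo$ (by $1$-Lipschitz continuity of $\wedge$ in each argument). Applying the Banach--Saks argument exactly as in the proof of Lemma~\ref{InvPosPres}, Ces\`aro means of a subsequence of $(g_n\wedge f)$ converge to $g\wedge f$ in $\|\cdot\|_Q$; by the nesting $A\cap A_k\subseteq A\cap A_n$ for $k\leq n$ these Ces\`aro means lie in $\bigcup_n D(Q_{A\cap A_n})$, so $g\wedge f\in D(Q_A^{\mathcal{A}})$.

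With the membership established, the inequality $\|f-(g\wedge f)\|_Q\leq\|f-g\|_Q$ together with the uniqueness of $P_{\mathcal{A},A} f$ gives $g\wedge f=g$, i.e.\ $P_{\mathcal{A},A} f\leq f$, completing the argument.
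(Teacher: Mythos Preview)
Your proposal is correct and follows essentially the same approach as the paper: both prove $P_A f\leq f$ by showing that the truncation $P_A f\wedge f$ lies in $D(Q_A)$, using $f-(P_A f\wedge f)=(f-P_A f)^+$ together with Lemma~\ref{PosPartIneq} to conclude that the truncation is at least as close to $f$ as $P_A f$, and then invoking uniqueness of the projection; the nest case is handled identically in both, via an approximating sequence in $\bigcup_n D(Q_{A\cap A_n})$, truncation by $f$, boundedness in $\|\cdot\|_Q$, $L^2$-convergence, and Banach--Saks.
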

\begin{proof}
Let $f\geq 0$ in $D(Q)$. Then, $f\wedge P_Af\in D(Q)$, since $f\wedge P_Af=\frac12(f+P_Af-|f-P_Af|)$  and $Q$ is  positivity preserving. Using that  $f$ is non-negative and that $(P_A f)1_{X\setminus A}= 0$ we infer $f\wedge P_Af\in D(Q_A)$. Note, that $f-f\wedge P_A f=(f-P_A f)_+$ holds.

Moreover, we conclude
\begin{align*}
\|f-f\wedge P_Af\|_Q^2=\|(f-P_Af)_+\|_Q^2\leq \|f-P_Af\|_Q^2,
\end{align*}
since $\|u_+\|_Q\leq\|u\|_Q$ for every $u\in D(Q)$ by Lemma~\ref{PosPartIneq}. Hence, $f\wedge P_Af$ has a smaller distance to $f$ than $P_Af$. But $P_A f$ is the unique distance minimizing object in $D(Q_A)$ and, thus, $P_Af=f\wedge P_Af$ holds. This shows $P_Af\leq f$. 

To prove the second inequality, we show $u\wedge P_{\mathcal{A},A} u\in D(Q_{A}^{\mathcal{A}})$ firs
Let $(v_n)$ be a sequence in $\bigcup_{n=1}^\infty D(Q_{A\cap A_n})$ such that $\|v_n-P_{\mathcal{A},A} u\|_{Q_A^{\mathcal{A}}}\to 0,$ $n\to\infty$. 
Then, we get $v_k\wedge u\in \bigcup_{n=1}^\infty D(Q_{A\cap A_n})$ for every $k\in\IN$, as $u\geq 0$ and every $v_k$ vanishes outside of some set $A_l\cap A$.  
Moreover, we have $$v_n\wedge u\to P_{\mathcal{A},A} u\wedge u\quad\text{ in }L^2(X,m)$$ and 
$$\sup_{n\in\IN}\|u\wedge v_n\|_Q\leq\sup_{n\in\IN}(2\|u\|_Q+2\|v_n\|_Q)<\infty.$$ Using Banach-Saks $u\wedge P_{\mathcal{A},A} u\in D(Q_{A}^{\mathcal{A}})$ follows. Now, the proof of the inequality $P_{\mathcal{A},A}f\leq f$  follows using exactly the same arguments as above.
\end{proof}

\section{Invariant sets}\label{section:invariance}
In this section we introduce the notion of $Q$-invariance of a set. In the standard literature, c.f. \cite{Fuku}, this concept is introduced for densely defined forms. However, the proofs of the basic results can be carried over to our more general setting.  We use this notions in section~\ref{subsection:connected_components}
to define the concept of connected components for a positivity preserving form in the wide sense, which in turn is then used to define nodal domains in Section~\ref{section:nodal_domains}.

\begin{definition}\label{definition:invariance}
Let $Q$ be a closed form in the wide sense with resolvent $G_\alpha$, $\alpha>0$. A measurable set $A\subseteq X$ is called \emph{$Q$-invariant} if it is invariant for every 
$G_\alpha,\alpha>0$, i.e., if for every $f\in \ltwo$ one has $$G_\alpha(1_A f)=1_A G_\alpha f.$$ 
\end{definition}
The next lemma shows that the $Q$-invariant sets form a $\sigma$-algebra. The lemma can be found in \cite[Lemma 1.6.1]{Fuku} for Dirichlet forms.
\begin{lemma}\label{lemma:Inv:Inv_is_sigma_field}
 Let $Q$ be a closed form in the wide sense. Then, the family of $Q$-invariant sets is a $\sigma$-algebra.
\end{lemma}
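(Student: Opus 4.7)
The plan is to verify the three defining closure properties of a $\sigma$-algebra for the family $\mathcal{I}$ of $Q$-invariant sets, working directly from the commutation condition $G_\alpha(1_A f) = 1_A G_\alpha f$ for all $\alpha > 0$ and $f \in \ltwo$. First I would note that $X \in \mathcal{I}$ trivially, since $1_X$ acts as the identity on $\ltwo$. Closure under complements is immediate from the linearity and boundedness of $G_\alpha$: if $A \in \mathcal{I}$, then for every $f \in \ltwo$
\[
G_\alpha(1_{X\setminus A} f) = G_\alpha f - G_\alpha(1_A f) = G_\alpha f - 1_A G_\alpha f = 1_{X\setminus A} G_\alpha f.
\]

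Next I would treat finite intersections, from which finite unions follow by De Morgan together with closure under complements. For $A, B \in \mathcal{I}$, the pointwise identity $1_{A \cap B} f = 1_A (1_B f)$ combined with two applications of the invariance hypothesis gives
\[
G_\alpha(1_{A \cap B} f) = G_\alpha(1_A(1_B f)) = 1_A G_\alpha(1_B f) = 1_A(1_B G_\alpha f) = 1_{A \cap B} G_\alpha f,
\]
so $A \cap B \in \mathcal{I}$.

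Finally, to extend to countable unions, I would take a sequence $(A_n) \subseteq \mathcal{I}$, set $B_n := \bigcup_{k=1}^n A_k$ (which lies in $\mathcal{I}$ by the previous paragraph) and $A := \bigcup_{n} A_n = \bigcup_n B_n$. For any $f \in \ltwo$, dominated convergence gives $1_{B_n} f \to 1_A f$ and $1_{B_n} G_\alpha f \to 1_A G_\alpha f$ in $\ltwo$, so by boundedness of $G_\alpha$ we also have $G_\alpha(1_{B_n} f) \to G_\alpha(1_A f)$ in $\ltwo$. Passing to the limit in the identity $G_\alpha(1_{B_n} f) = 1_{B_n} G_\alpha f$ yields $G_\alpha(1_A f) = 1_A G_\alpha f$, hence $A \in \mathcal{I}$. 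There is no serious obstacle here; the argument is essentially the standard observation that the set of projections in $\ltwo$ commuting with the bounded family $\{G_\alpha\}_{\alpha>0}$ forms a complete Boolean sublattice of the projection lattice, and the only mild technical point is the dominated-convergence passage in the final step, which is unproblematic since the indicators $1_{B_n}$ are uniformly bounded by $1$.
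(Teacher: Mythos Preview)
Your proof is correct and follows essentially the same route as the paper's: trivial inclusion of $X$, complements via linearity of $G_\alpha$, finite intersections by iterating the commutation identity, and the countable step by a monotone limit of indicators together with boundedness of $G_\alpha$. The only cosmetic difference is that the paper passes to countable intersections directly while you pass to countable unions after first obtaining finite unions via De Morgan; this is immaterial.
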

\begin{proof}
Obviously $\emptyset$ and $X$ are $Q$-invariant. 
Let $A$ be invariant for every $G_\alpha,\alpha>0$. One has \[G_\alpha (1_Af)+G_\alpha (1_{X\setminus A}f)=G_\alpha f=1_A G_\alpha f+1_{X\setminus A} G_\alpha f.\] By $G_\alpha (1_A f)=1_A G_\alpha f$, the set $ X\setminus A $ is $ G_{\alpha} $ and, thus, $ Q $-invariant.

 Now let $(A_i)_{i\in \IN}$ be $Q$-invariant.
For, $g\in \ltwo$, $\alpha>0$, $ i,j $, we get
 \[G_\alpha (g1_{A_{i}\cap A_{j}})=G_\alpha (1_{A_{i}}(1_{A_{j}}g))=1_{A_{i}}G_\alpha (1_{A_{j}} g)=1_{A_{i}} 1_{A_{j}} G_\alpha g=1_{A_{i}\cap A_{j}}G_\alpha g.\] Thus, $A_{i}\cap A_{j}$ is $Q$-invariant. Moreover, \[g1_{\bigcap_{i=1}^\infty A_i} =\lim_{n\to\infty}g1_{\bigcap_{i=1}^n A_i}\text{ in }L^2(X,m)\] and,
by the continuity of $G_\alpha$ and the $Q$-invariance of $\bigcap_{i=1}^n A_i$, we deduce 
 \begin{align*}
                                                G_\alpha (g1_{\bigcap_{i=1}^\infty A_i}) =\lim_{n\to\infty}G_\alpha (g1_{\bigcap_{i=1}^n A_i})=\lim_{n\to\infty}1_{\bigcap_{i=1}^n A_i} G_\alpha g
                                                =1_{\bigcap_{i=1}^\infty A_i} G_\alpha g.
                                               \end{align*}
 This completes the proof.
\end{proof}

The next lemma gives a characterization of $Q$-invariance via  $Q$. Again, it can be found in \cite[Theorem 1.6.1]{Fuku} for Dirichlet forms.
\begin{lemma}\label{CharInv}
Let $Q$ be a positivity preserving form in the wide sense. A measurable set $A$ is $Q$-invariant if and only if 
$1_A f\in D(Q)$ and \[Q(f)=Q(1_A f)+Q(1_{X\setminus A}f),\qquad f\in D(Q).\]
\end{lemma}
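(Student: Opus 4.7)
The plan is to prove both implications by reducing to the resolvent characterization of $D(Q)$ and of the bilinear form $Q$ supplied by Lemma~\ref{FormReso}. I would work with the auxiliary operators $T_\alpha := \alpha(I - \alpha G_\alpha)$ on $\ltwo$, which are non-negative self-adjoint: symmetry of $Q$ together with the defining variational equation forces $G_\alpha$ to be self-adjoint, and the monotonicity clause of Lemma~\ref{FormReso} gives $T_\alpha \ge 0$, with $\langle T_\alpha f, f\rangle \nearrow Q(f)$ whenever $f \in D(Q)$ and $\langle T_\alpha f, g\rangle \to Q(f,g)$ for $f, g \in D(Q)$. Because the $Q$-invariance hypothesis means that multiplication by $1_A$ commutes with $G_\alpha$, spectral calculus then delivers the commutation of $1_A$ with $T_\alpha^{1/2}$.

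For the ``only if'' direction, I would fix $f \in D(Q)$ and bound
\begin{align*}
\langle T_\alpha(1_A f), 1_A f\rangle = \|T_\alpha^{1/2}(1_A f)\|_\ltwo^2 = \|1_A T_\alpha^{1/2} f\|_\ltwo^2 \le \|T_\alpha^{1/2} f\|_\ltwo^2 = \langle T_\alpha f, f\rangle \le Q(f).
\end{align*}
By the $D(Q)$-criterion in Lemma~\ref{FormReso} this yields $1_A f \in D(Q)$, and an identical argument applied to the $Q$-invariant set $X\setminus A$ (invariant by Lemma~\ref{lemma:Inv:Inv_is_sigma_field}) gives $1_{X\setminus A} f \in D(Q)$. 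For the additivity, the commutation together with $1_A \cdot 1_{X\setminus A} = 0$ yields
\begin{align*}
\langle T_\alpha(1_A f), 1_{X\setminus A} f\rangle = \langle 1_A T_\alpha f, 1_{X\setminus A} f\rangle = \langle T_\alpha f, 1_A 1_{X\setminus A} f\rangle = 0,
\end{align*}
whence $Q(1_A f, 1_{X\setminus A} f) = 0$ on passing to the limit $\alpha\to\infty$. Expanding $Q(f) = Q(1_A f + 1_{X\setminus A} f)$ then finishes this direction.

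For the ``if'' direction, the hypothesis $Q(f) = Q(1_A f) + Q(1_{X\setminus A} f)$ polarizes to $Q(1_A f, 1_{X\setminus A} g) = 0$ for all $f, g \in D(Q)$. Given $h \in \ltwo$, put $u = G_\alpha h$ and compute, for an arbitrary test function $v \in D(Q)$,
\begin{align*}
Q(1_A u, v) + \alpha\langle 1_A u, v\rangle = Q(u, 1_A v) + \alpha\langle u, 1_A v\rangle = \langle h, 1_A v\rangle = \langle 1_A h, v\rangle,
\end{align*}
where the first equality uses the orthogonality to rewrite $Q(1_A u, v) = Q(1_A u, 1_A v) = Q(u, 1_A v)$ and self-adjointness of $1_A$ to handle the inner product term. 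The uniqueness clause of Lemma~\ref{FormReso} then identifies $1_A u$ with $G_\alpha(1_A h)$, which is exactly the invariance condition.

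The one delicate point is the commutation of $1_A$ with $T_\alpha^{1/2}$ used in the first implication; once this is granted via functional calculus applied to the self-adjoint $G_\alpha$, everything else is bookkeeping with the limit characterization from Lemma~\ref{FormReso}. In fact the argument makes no essential use of positivity preservation, only of symmetry and closedness, so the lemma extends verbatim to closed symmetric forms in the wide sense.
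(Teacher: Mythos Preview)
Your proof is correct and, for the ``if'' direction, coincides with the paper's argument essentially verbatim: polarize the hypothesis to obtain $Q(1_A f, g) = Q(f, 1_A g)$, feed this into the variational identity for $G_\alpha$, and invoke uniqueness.

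For the ``only if'' direction both you and the paper work with the approximating forms $T_\alpha = \alpha(I-\alpha G_\alpha)$, but you take a small detour through functional calculus to get $1_A T_\alpha^{1/2} = T_\alpha^{1/2} 1_A$ and then bound $\|1_A T_\alpha^{1/2} f\|^2 \le \|T_\alpha^{1/2} f\|^2$. The paper avoids the square root entirely: from $G_\alpha 1_A = 1_A G_\alpha$ one has directly
\[
\langle T_\alpha f, f\rangle = \langle T_\alpha(1_A f), 1_A f\rangle + \langle T_\alpha(1_{X\setminus A} f), 1_{X\setminus A} f\rangle,
\]
the cross terms vanishing because $T_\alpha(1_A f)$ is supported on $A$; non-negativity and monotonicity of each summand (Lemma~\ref{FormReso}) then give $1_A f, 1_{X\setminus A} f \in D(Q)$ and the additive identity in one stroke. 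Your route is perfectly valid but imports the spectral theorem where a two-line orthogonality computation suffices. Your closing observation that positivity preservation is never used is also correct and worth noting.
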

\begin{proof}
Let $A$ be $Q$-invariant. Then, by Lemma~\ref{FormReso}, one has  for  $f\in D(Q)$
\begin{align*}
Q(f)=\lim_{\alpha\to\infty}&\alpha\big(\langle f1_A,f1_A\rangle_\ltwo+\langle f1_{X\setminus A},f1_{X\setminus A}\rangle_\ltwo\\
&-\langle  \alpha G_\alpha f1_{A}, f1_{A}\rangle_\ltwo-\langle  \alpha G_\alpha f1_{X\setminus A}, f1_{X\setminus A}\rangle_\ltwo\\
&-2\langle  \alpha G_\alpha f1_{ A}, f1_{X\setminus A}\rangle_\ltwo\big).
\end{align*}

By $Q$-invariance of $A$ one has $G_\alpha 1_A f=1_A G_\alpha f$. Thus, \[2\langle  \alpha G_\alpha f1_{ A}, f1_{X\setminus A}\rangle_\ltwo=0\] follows.  Since the function $\alpha\mapsto \alpha\langle u-G_\alpha u,u\rangle_\ltwo$ is monotonically increasing and non-negative for  $u\in\ltwo$,  by Lemma~\ref{FormReso}, both limits 
\begin{align*}
Q(f_{ A})&=\lim_{\alpha\to\infty}\alpha\langle f1_A-G_\alpha f1_A,f1_A\rangle_\ltwo\\
Q(f_{X\setminus A})&=\lim_{\alpha\to\infty}\alpha\langle f1_{X\setminus A}-G_\alpha f1_{X\setminus A},f1_{X\setminus A}\rangle_\ltwo
\end{align*}
  exist. Hence, the first direction is proven. 

For the other direction suppose assume $1_A f\in D(Q)$ and $$Q(f)=Q(1_A f)+Q(1_{X\setminus A}f)$$  for  all $f\in D(Q)$. By the polarization identity we infer that 
$$Q(f,g)=Q(1_A f,1_A g)+Q(1_{X\setminus A}f,1_{X\setminus A} g)$$ holds for every $f,g\in D(Q)$ and, hence,  one has
 \[Q(1_A f,g)=Q(1_{X\setminus A} 1_A f,1_{X\setminus A}g)+Q(1_A1_Af,1_Ag)=Q(1_Af,1_Ag).\] Interchanging the roles of $f$ and $g$ yields \[Q(1_Af,g)=Q(f,1_Ag).\]
By Lemma~\ref{FormReso}, we can calculate for  $f\in\ltwo$, $g\in D(Q)$ and $\alpha>0$ \begin{align*}
 \langle G_\alpha 1_A f,g\rangle_{Q,\alpha}
 &=\langle1_A f,g\rangle_\ltwo=\langle f,1_A g\rangle_\ltwo= \langle G_\alpha  f,1_Ag\rangle_{Q,\alpha}\\
 &= \langle 1_AG_\alpha f,g\rangle_{Q,\alpha}.
 \end{align*}
 Since $(D(Q),\langle\cdot,\cdot\rangle_{Q,\alpha})$ is a Hilbert space we infer $G_\alpha (1_Af)=1_AG_\alpha f$.
\end{proof}

We  know by Proposition~\ref{ProjectRes} that  restrictions of a closed form in the wide sense are closed.
Next, we show that if restricting  to an invariant set, then the resolvent of the restricted form is the restriction of the resolvent.
\begin{lemma}\label{RestrRes}
Let $Q$ be a closed form in the wide sense  and let $A$ be $Q$-invariant.  Then, the resolvent $G_\alpha^A$, $\alpha>0$, of $Q_A$ satisfies $G_\alpha^A=G_\alpha|_{L^2(A,m)}$ and if $Q$ is densely defined on $L^2(X,m)$, then $Q_A$ is densely defined on $L^2(A,m)$. 
\end{lemma}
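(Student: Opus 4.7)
The plan is to prove the two claims in Lemma~\ref{RestrRes} separately, with both relying on the characterisation of the resolvent in Lemma~\ref{FormReso} and on the multiplicative interpretation of $Q$-invariance.

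For the identity $G_\alpha^A = G_\alpha|_{L^2(A,m)}$, I would first observe that for any $f\in L^2(A,m)$ the $Q$-invariance of $A$ gives
\[
G_\alpha f = G_\alpha(1_A f) = 1_A G_\alpha f,
\]
so $G_\alpha f\in D(Q)$ vanishes on $X\setminus A$ and therefore lies in $D(Q_A)$. The resolvent equation from Lemma~\ref{FormReso} then yields, for every $g\in D(Q_A)\subseteq D(Q)$,
\[
Q_A(G_\alpha f, g) + \alpha\langle G_\alpha f, g\rangle_{L^2(A,m)} = Q(G_\alpha f, g) + \alpha\langle G_\alpha f, g\rangle_{L^2(X,m)} = \langle f,g\rangle_{L^2(X,m)} = \langle f,g\rangle_{L^2(A,m)}.
\]
Uniqueness of the solution to the resolvent equation for $Q_A$, again by Lemma~\ref{FormReso} (applied to the closed form in the wide sense $Q_A$, which is closed by Lemma~\ref{ResClos}), forces $G_\alpha^A f = G_\alpha f$.

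For the density statement, the key step is to show that $1_A f\in D(Q)$ whenever $f\in D(Q)$, which is the analogue of Lemma~\ref{CharInv} but must be obtained without positivity preservation. I would use the monotonicity characterisation of $D(Q)$ from Lemma~\ref{FormReso}: $h\in D(Q)$ iff $\alpha\langle h-\alpha G_\alpha h,h\rangle_{L^2(X,m)}$ is bounded in $\alpha$, and this quantity is non-negative and increasing. Splitting $f = 1_A f + 1_{X\setminus A} f$ and using $G_\alpha(1_A f)=1_A G_\alpha f$, $G_\alpha(1_{X\setminus A}f)=1_{X\setminus A}G_\alpha f$ together with the orthogonality $1_A\cdot 1_{X\setminus A}=0$, one gets
\[
\alpha\langle f - \alpha G_\alpha f, f\rangle_{L^2(X,m)} = \alpha\langle 1_A f - \alpha G_\alpha(1_A f), 1_A f\rangle_{L^2(X,m)} + \alpha\langle 1_{X\setminus A} f - \alpha G_\alpha(1_{X\setminus A} f), 1_{X\setminus A} f\rangle_{L^2(X,m)}.
\]
The left-hand side converges to $Q(f)<\infty$ as $\alpha\to\infty$, and both summands on the right are non-negative and monotonically increasing, hence each converges to a finite limit. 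By the characterisation, $1_A f\in D(Q)$, and thus $1_A f\in D(Q_A)$.

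Now, assuming $Q$ is densely defined in $L^2(X,m)$, the density of $D(Q_A)$ in $L^2(A,m)$ follows easily: given $f\in L^2(A,m)$, pick $(f_n)\subseteq D(Q)$ with $f_n\to f$ in $L^2(X,m)$; then $1_A f_n\in D(Q_A)$ by the previous paragraph and
\[
\|1_A f_n - f\|_{L^2(A,m)} = \|1_A(f_n - f)\|_{L^2(X,m)} \leq \|f_n - f\|_{L^2(X,m)} \to 0.
\]
The main obstacle is the middle paragraph: establishing $1_A f\in D(Q)$ without assuming $Q$ is positivity preserving. The monotonicity argument via Lemma~\ref{FormReso} is what makes this go through in the general wide-sense setting, and everything else is routine verification of resolvent and approximation identities.
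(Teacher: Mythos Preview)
Your proof is correct and follows essentially the same route as the paper: both verify that $G_\alpha f$ lies in $D(Q_A)$ and satisfies the $Q_A$-resolvent identity (whence uniqueness gives $G_\alpha^A=G_\alpha|_{L^2(A,m)}$), and both approximate $f\in L^2(A,m)$ by $1_A f_n$ with $f_n\in D(Q)$. The paper simply writes ``By $Q$-invariance, we have $f_n 1_A\in D(Q)$'' without further comment; your monotonicity/splitting argument via Lemma~\ref{FormReso} is precisely the justification behind that line (it is the same computation that appears in the forward direction of Lemma~\ref{CharInv}, which indeed does not use positivity preservation), so you have not taken a different path but rather made explicit a step the paper leaves terse.
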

\begin{proof}
 By $Q$-invariance, the resolvent $G_\alpha$  maps   $L^2(A,m)$ to $L^2(A,m)$.  Hence, $G_\alpha|_{L^2(A,m)}$ is the resolvent of $Q_A$ since for  $u\in L^2(A,m), v\in D(Q_A)$  \begin{align*}
  Q_A(G_\alpha u, v)+\alpha \langle G_\alpha u,v\rangle_{L^2(A,m)}
  =&Q(G_\alpha u, v)+\alpha \langle G_\alpha u,v\rangle_\ltwo\\
  =&\langle u,v\rangle_\ltwo=\langle u,v\rangle_{L^2(A,m)}.
  \end{align*}

It is left to show that $Q_A$ is densely defined in $L^2(A,m)$ if $Q$ is densely defined in $L^2(X,m)$.  
For every $f\in L^2(A,m)$, there is a sequence of functions $(f_n)$ in $D(Q)$ such that 
 $\|f_n-f\|_\ltwo\to 0$. By $Q$-invariance, we have $ f_n1_A\in D(Q) $ and, thus, by definition of $Q_A$, we infer $f_n1_A\in D(Q_A)$ for every $n$. Moreover, $f_n1_A$ converges to $f$ in $L^2(A,m)$. 
 Thus, density of $D(Q_A)$ in $L^2(A,m)$ follows. 
\end{proof}

Next we relate $Q_A$-invariance and $Q$-invariance  for restrictions $Q_A$ of $Q$.
\begin{lemma}\label{InvTrans}
Let $Q$ be a closed form in the wide sense. 
\begin{itemize}
 \item [(a)]  Every $ Q_{A} $ invariant subset  of a  $Q$-invariant set $ A $ is $Q$-invariant.
\item[(b)] The intersection $B\cap A$ of a measurable set $A$ and a $Q$-invariant set $ B $  is $Q_A$-invariant. 
\end{itemize}
 \end{lemma}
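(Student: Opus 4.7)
The plan is to handle the two parts separately. Part (a) is essentially a direct unfolding once Lemma \ref{RestrRes} is invoked, whereas part (b) requires a short preliminary step because $A$ is not assumed to be $Q$-invariant, so the clean identification $G_\alpha^A = G_\alpha|_{L^2(A,m)}$ is unavailable and one must argue via the defining resolvent equation.

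For (a), I would exploit Lemma \ref{RestrRes}, which applies since $A$ is $Q$-invariant, to translate $Q_A$-invariance of $C$ into the identity $G_\alpha(1_C h) = 1_C G_\alpha h$ valid for $h \in L^2(A,m)$. Given an arbitrary $f \in L^2(X,m)$, using $1_C f = 1_C(f 1_A)$ (since $C \subseteq A$) I would chain
\[ G_\alpha(1_C f) = G_\alpha^A(1_C(f 1_A)) = 1_C G_\alpha^A(f 1_A) = 1_C G_\alpha(f 1_A) = 1_C \cdot 1_A G_\alpha f = 1_C G_\alpha f, \]
invoking in order Lemma \ref{RestrRes}, $Q_A$-invariance of $C$, Lemma \ref{RestrRes} again, and finally $Q$-invariance of $A$ together with $C \subseteq A$.

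For (b), the main obstacle is that Lemma \ref{CharInv} characterizes invariance via the form only in the positivity preserving case, while the present lemma is stated for general closed forms in the wide sense. I would first show that if $B$ is $Q$-invariant then $1_B g \in D(Q)$ for every $g \in D(Q)$. Since $X\setminus B$ is automatically $Q$-invariant (from $G_\alpha g = G_\alpha(1_B g) + G_\alpha(1_{X\setminus B} g)$), the non-negative quantity $\alpha\langle g - \alpha G_\alpha g, g\rangle$ splits orthogonally as the sum of the analogous expressions for $1_B g$ and $1_{X\setminus B} g$; as this sum stays bounded by Lemma \ref{FormReso} and each summand is itself non-negative by that lemma, each summand is bounded, whence $1_B g, 1_{X\setminus B} g \in D(Q)$.

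With this in place, I would verify the resolvent equation directly. Given $f \in L^2(A,m)$, set $g := G_\alpha^A f \in D(Q_A)$; then $g$ is supported on $A$, so $1_B g = 1_{B\cap A} g$ lies in $D(Q)$ and is supported on $B \cap A \subseteq A$, hence in $D(Q_A)$. For a test function $v \in D(Q_A)$, the same reasoning gives $1_{B\cap A} v \in D(Q_A)$. Using the identity $Q(u,w) = \lim_{\alpha\to\infty} \alpha\langle u - \alpha G_\alpha u, w\rangle$ from Lemma \ref{FormReso} together with the commutation $G_\alpha(1_B \cdot) = 1_B G_\alpha(\cdot)$, the indicator $1_B$ transfers between slots, yielding $Q(1_{B\cap A} g, v) = Q(g, 1_{B\cap A} v)$. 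Combined with the defining identity of $g = G_\alpha^A f$ this gives
\[ Q_A(1_{B\cap A} g, v) + \alpha\langle 1_{B\cap A} g, v\rangle = \langle 1_{B\cap A} f, v\rangle, \]
and uniqueness of the resolvent (Lemma \ref{FormReso}) yields $G_\alpha^A(1_{B\cap A} f) = 1_{B\cap A} G_\alpha^A f$, which is precisely $Q_A$-invariance of $B \cap A$.
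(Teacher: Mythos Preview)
Your proof of part (a) is correct and essentially identical to the paper's: both invoke Lemma~\ref{RestrRes} to identify $G_\alpha^A$ with $G_\alpha|_{L^2(A,m)}$ and then chain the invariance identities.

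For part (b) your argument is also correct but takes a different route. The paper works entirely through the form characterization of invariance: for $g\in D(Q_A)$ it shows $g1_{A\cap B}=g1_B\in D(Q_A)$ and then checks the cross term
\[
Q_A(g1_{A\cap B},\,g1_{A\setminus(A\cap B)})=Q(g1_B,\,g1_{X\setminus B})=0,
\]
concluding $Q_A$-invariance via (both directions of) Lemma~\ref{CharInv}. You instead verify the resolvent identity $G_\alpha^A(1_{B\cap A}f)=1_{B\cap A}G_\alpha^A f$ directly from the variational characterization in Lemma~\ref{FormReso}, first establishing $1_Bg\in D(Q)$ by the orthogonal splitting argument and then transferring $1_B$ across the form via the limit formula. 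Your approach is slightly longer but has the advantage of being self-contained: Lemma~\ref{CharInv} is stated in the paper only for positivity preserving forms, while Lemma~\ref{InvTrans} is asserted for arbitrary closed forms in the wide sense, so the paper is tacitly relying on the fact that the proof of Lemma~\ref{CharInv} does not actually use positivity. Your argument sidesteps this entirely.
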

\begin{proof}
(a) Let $g\in L^2(X,m)$. By Lemma~\ref{RestrRes} and $Q$-invariance of $A$, the resolvent $G_\alpha^A$ of $Q_A$ is given by $G_\alpha|_{L^2(A,m)}$, $\alpha>0$. Hence, 
$$G_\alpha 1_Bg=G_\alpha|_{L^2(A,m)}1_Bg=G_\alpha^A 1_Bg.$$
Using the $Q_A$-invariance of $B$ we deduce $$G_\alpha^A 1_Bg=1_BG_\alpha^Ag.$$ By the equality $G_\alpha^Ag=1_AG_\alpha g$ we conclude the proof (a).

(b) Let $g\in D(Q_A)$. Snce $g1_{X\setminus A}=0$, we infer $g1_{A\cap B}=g1_{B}\in D(Q)$ by $Q$-invariance of $B$. 
By $A\cap B\subseteq A$, the equality
$g1_{A\cap B}1_{X\setminus A}= 0$ holds as well. Thus, we deduce $g1_{A\cap B}\in D(Q_A)$ and, therefore, $g1_{A\setminus(A\cap B)}=g1_{A\setminus B}=g1_{X\setminus B} \in D(Q_A)$, where the last equality holds by $g1_{X\setminus A}=0$. 
Furthermore, \[Q_A(g1_{A\cap B},g1_{A\setminus(A\cap B)})=Q(g1_B, g1_{X\setminus B})=0\] holds by $Q$-invariance of $B$. This concludes the proof.
\end{proof}

\bibliography{Literature}{}
\bibliographystyle{alpha}
\end{document}